\documentclass[12pt,a4paper,final]{amsart}
\usepackage{amssymb}
\usepackage{hyperref}
\usepackage[notcite, notref]{showkeys}
\usepackage[utf8]{inputenc}
\usepackage{rotating}


\setlength{\voffset}{-1in}
\setlength{\topmargin}{1.5cm}
\setlength{\hoffset}{-1in}
\setlength{\oddsidemargin}{2cm}
\setlength{\evensidemargin}{2cm}
\setlength{\textwidth}{17cm}
\setlength{\textheight}{24.5cm}
\setlength{\footskip}{0.5cm}

\theoremstyle{plain}
\newtheorem{theorem}{Theorem}[section]
\newtheorem{proposition}[theorem]{Proposition}

\newtheorem{lemma}[theorem]{Lemma}
\newtheorem{claim}{Claim}

\theoremstyle{definition}

\newtheorem{example}[theorem]{Example}
\newtheorem{remark}[theorem]{Remark}
\newtheorem{question}[theorem]{Question}

\newtheorem{notation}[theorem]{Notation}

\theoremstyle{remark}

\numberwithin{equation}{section}

\newcommand{\N}{\mathbb N}
\newcommand{\Z}{\mathbb Z}

\newcommand{\R}{\mathbb R}
\newcommand{\C}{\mathbb C}

\newcommand{\Hy}{\mathbb H}

\newcommand{\fg}{\mathfrak g}
\newcommand{\fh}{\mathfrak h}
\newcommand{\fj}{\mathfrak j}
\newcommand{\fk}{\mathfrak k}
\newcommand{\fl}{\mathfrak l}
\newcommand{\fm}{\mathfrak m}

\newcommand{\fp}{\mathfrak p}
\newcommand{\fq}{\mathfrak q}

\newcommand{\ft}{\mathfrak t}

\newcommand{\fz}{\mathfrak z}

\DeclareMathOperator{\GL}{GL}

\DeclareMathOperator{\SO}{SO}
\DeclareMathOperator{\SU}{SU}
\DeclareMathOperator{\Sp}{Sp}
\DeclareMathOperator{\Ut}{U}

\DeclareMathOperator{\Spin}{Spin}

\newcommand{\gl}{\mathfrak{gl}}

\newcommand{\so}{\mathfrak{so}}
\newcommand{\spp}{\mathfrak{sp}}
\newcommand{\su}{\mathfrak{su}}
\newcommand{\ut}{\mathfrak{u}}

\newcommand{\op}{\operatorname}
\newcommand{\Id}{\textup{Id}}

\newcommand{\mi}{\mathrm{i}}

\DeclareMathOperator{\tr}{Tr}
\DeclareMathOperator{\diag}{diag}
\DeclareMathOperator{\Span}{Span}
\newcommand{\inner}[2]{\langle {#1},{#2}\rangle }
\newcommand{\innerdots}{\langle {\cdot},{\cdot}\rangle }
\newcommand{\ee}{\varepsilon}

\DeclareMathOperator{\Hom}{Hom}

\DeclareMathOperator{\Ad}{Ad}
\DeclareMathOperator{\ad}{ad}
\DeclareMathOperator{\Cas}{Cas}
\newcommand{\pr}{\op{pr}}

\DeclareMathOperator{\Spec}{Spec}

\DeclareMathOperator{\Iso}{Iso}

\DeclareMathOperator{\scal}{scal}

\DeclareMathOperator{\Rc}{Rc}

\DeclareMathOperator{\vol}{vol}
\newcommand{\kil}{\textup{B}}

\newcommand{\wQ}{\widetilde Q}
\newcommand{\PP}{\mathcal{P}}
\newcommand{\cte}{h}
\newcommand{\kl}{\mathcal S}

\newcommand{\Scomplex}[1][(n+1)]{\op{S}_{\C}(\R^{2{#1}})}
\newcommand{\Squaternionic}[1][(n+1)]{\op{S}_{\Hy}(\C^{2{#1}})}
\newcommand{\Grass}[2]{\op{Gr}_{{#1}}(\R^{{#2}})}

\setcounter{tocdepth}{1}

\title{Spectrally distinguishing symmetric spaces I}


\author{Emilio~A.~Lauret}
\address{Instituto de Matemática (INMABB), Departamento de Matemática, Universidad Nacional del Sur (UNS)-CONICET, Bahía Blanca, Argentina.}
\email{emilio.lauret@uns.edu.ar}

\author{Juan~Sebastián~Rodríguez}
\address{Departamento de Matemáticas, Pontificia Universidad Javeriana, Bogotá, Colombia.}
\email{js.rodriguez@javeriana.edu.co}

\subjclass[2020]{Primary: 58J53. Secondary: 53C30, 58C40.}
\keywords{Isospectrality, first eigenvalue, homogeneous metric, symmetric space, $nu$-stability}
\date{\today}

\begin{document}

\begin{abstract}
We prove that the irreducible symmetric space of complex structures on $\mathbb R^{2n}$ (resp.\ quaternionic structures on $\mathbb C^{2n}$) is spectrally unique within a $2$-parameter (resp.\ $3$-parameter) family of homogeneous metrics on the underlying differentiable manifold.
Such families are strong candidates to contain all homogeneous metrics admitted on the corresponding manifolds. 

The main tool in the proof is an explicit expression for the smallest positive eigenvalue of the Laplace-Beltrami operator associated to each homogeneous metric involved. 
As a second consequence of this expression, we prove that any non-symmetric Einstein metric in the homogeneous families mentioned above is $\nu$-unstable. 

\end{abstract} 

\maketitle


\section{Introduction}

Let $(M,g)$ be a compact connected Riemannian manifold and let $\Delta_g$ be the Laplace-Beltrami operator associated to $(M,g)$. 
It is well known that the spectrum $\Spec(M,g):=\Spec(\Delta_g)$ of $\Delta_g$ does not determine the isometry class of $(M,g)$. 
That is, there exists in some cases another Riemannian manifold $(M',g')$ isospectral to $(M,g)$, i.e.\ $\Spec(M',g')=\Spec(M,g)$, but not isometric to $(M,g)$. 
The literature contains plenty of such examples (see e.g.\ the survey \cite{Gordon00survey}). 

However, it is expected that a Riemannian manifold $(M,g)$ with distinguished geometric properties (e.g.\ spaces of constant sectional curvature or, more generally, symmetric spaces) is \emph{spectrally unique} (i.e.\ it is not isospectral to any non-isometric manifold), or at least \emph{spectrally isolated} (i.e.\ it is not isospectral to any metric in a punctured neighborhood of $g$ in a suitable topology on the space of metrics on $M$) or \emph{spectrally rigid} (i.e.\ it does not admit non-trivial isospectral deformations in the space of metric on $M$). 

To date, the literature contains only a few spectral uniqueness results. 
For instance, we have the following result of Tanno~\cite{Tanno73}: 

\begin{theorem}[Tanno, 1973]
Any orientable compact Riemannian manifold isospectral to a round sphere $(S^n,g_{\textup{round}})$ of dimension $n\leq 6$ is necessarily isometric to $(S^n,g_{\textup{round}})$.  
\end{theorem}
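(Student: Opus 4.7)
The natural tool is the heat-kernel asymptotic expansion
\[
\tr(e^{-t\Delta_g})\sim (4\pi t)^{-n/2}\sum_{k\geq 0}a_k(g)\,t^k,
\]
whose coefficients $a_k(g)=\int_M U_k(g)\,dv_g$ are integrals of universal polynomials in the curvature tensor and its covariant derivatives, and hence are spectral invariants. From $a_0$, $a_1$ and $a_2$ one extracts that $(M,g)$ has the same dimension and volume as $(S^n,g_{\textup{round}})$, that $\int_M\scal_g\,dv_g$ agrees with its sphere value, and that
\[
\int_M\bigl(2|\Rm|^2-2|\Ric|^2+5\scal^2\bigr)\,dv_g
\]
also agrees with its sphere value. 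In dimension $n\geq 3$ one has the pointwise algebraic inequalities $|\Ric|^2\geq \scal^2/n$ (equality iff $g$ is Einstein) and $|\Rm|^2\geq 2\scal^2/(n(n-1))$ (equality iff $g$ has constant sectional curvature), both saturated pointwise on $(S^n,g_{\textup{round}})$.

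The strategy is to combine these integral identities with the two pointwise inequalities and with Cauchy--Schwarz $(\int\scal)^2\leq\vol\cdot\int\scal^2$ (equality iff $\scal$ is constant) so as to force $(M,g)$ to be Einstein and to have vanishing Weyl tensor, i.e.\ of constant positive sectional curvature matching that of the round sphere.

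The main obstacle, and the reason for the restriction $n\leq 6$, is that the signed quadratic combination appearing in $a_2$ does not split as a sum of nonnegative quantities: the $-2|\Ric|^2$ term can in principle absorb an excess of $|\Rm|^2$. To break this degeneracy one has to bring in the next heat coefficient $a_3$, which involves $\int|\nabla\Rm|^2$ together with cubic curvature invariants, and to exploit topological constraints such as the Chern--Gauss--Bonnet integrand in low even dimensions. These produce extra linear relations among $\int|\Rm|^2$, $\int|\Ric|^2$, $\int\scal^2$ and related quantities which, only in the range $n\leq 6$, suffice to pin down both $|\Ric|^2-\scal^2/n$ and $|\Rm|^2-2\scal^2/(n(n-1))$ as nonnegative functions of zero integral, hence identically zero. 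Managing this delicate numerology dimension-by-dimension is the technical heart of the argument.

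Once $(M,g)$ has been shown to have constant positive sectional curvature equal to that of the round sphere, the Killing--Hopf classification yields $M\cong S^n/\Gamma$ for some finite $\Gamma\subset\Ot(n+1)$ acting freely; orientability restricts $\Gamma$ to lie in $\SO(n+1)$, and the equality of volumes forces $|\Gamma|=1$. Hence $(M,g)$ is isometric to $(S^n,g_{\textup{round}})$.
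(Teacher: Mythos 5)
First, a point of reference: the paper does not prove this statement. It is quoted from Tanno's 1973 article purely as motivation, so your proposal can only be measured against Tanno's published argument, not against anything in this text.

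Your skeleton is the right one --- heat invariants $a_0,a_1,a_2$ (and $a_3$ in the top dimension), Cauchy--Schwarz to force constant scalar curvature, rigidity of the $a_2$ integrand, then the space-form classification; and your endgame is fine (in fact the equality of volumes alone already forces $|\Gamma|=1$, so orientability is not even needed there). The gap is in the decisive middle step. The two pointwise inequalities you invoke, $|\Ric|^2\ge\scal^2/n$ and $|\Rm|^2\ge 2\scal^2/(n(n-1))$, cannot close the argument, because --- as you yourself observe --- the $-2|\Ric|^2$ term in $a_2$ enters with the wrong sign; and the repair you then propose ($a_3$ together with Chern--Gauss--Bonnet, handled ``dimension by dimension'') is not what makes the proof work and would be very hard to carry out as stated. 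The actual mechanism is the orthogonal decomposition of $\Rm$ into its Weyl, traceless-Ricci and scalar parts: from $|\Rm|^2=|W|^2+\tfrac{4}{n-2}|\Ric_0|^2+\tfrac{2}{n(n-1)}\scal^2$ one gets
\[
2|\Rm|^2-2|\Ric|^2+5\scal^2 \;=\; 2|W|^2+\tfrac{2(6-n)}{n-2}\,|\Ric_0|^2+\tfrac{5n^2-7n+6}{n(n-1)}\,\scal^2,
\]
which for $3\le n\le 5$ is a nonnegative combination of precisely the quantities that vanish (resp.\ are extremal, via Cauchy--Schwarz) on the round sphere; comparing with the sphere's $a_2$ then forces $W\equiv0$, $\Ric_0\equiv0$ and $\scal$ constant, with no input from $a_3$ or Gauss--Bonnet at all. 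This identity is simultaneously the reason the theorem holds for $n\le 5$ and the reason it is hard beyond: the coefficient $2(6-n)/(n-2)$ vanishes exactly at $n=6$ --- which is where, and only where, Tanno must bring in $a_3$ to recover control of $\Ric_0$ --- and becomes negative for $n\ge 7$, which is why the case of $S^7$ remains open. Without this identity (or an equivalent positivity statement), your outline does not yet constitute a proof in any dimension $n\ge 3$.
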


It is still not known whether round $7$-spheres are spectrally unique!
(Tanno~\cite{Tanno80spectralisolation} also showed that arbitrary rounds spheres are spectrally isolated.)

A common strategy to spectrally distinguish special manifolds is to restrict the family of metrics. 
This work is motivated by the following natural question:

\begin{question}\label{question}
Is any (globally) symmetric space $(M,g)$ of compact type spectrally unique within the space of homogeneous Riemannian metrics on $M$?
\end{question}

We recall that a Riemannian metric $g'$ on a differentiable manifold $M$ is called \emph{homogeneous} if the isometry group $\Iso(M,g')$ acts transitively on $M$. 
The assumption for the symmetric space $(M,g)$ of being of \emph{compact type} in Question~\ref{question} ensures that $M$ is compact as well as its universal cover $\widetilde M$. 
In particular, $\widetilde M$ has no Euclidean factor.
If we omit this assumption, we have isospectral flat tori that are not isometric (see \cite[page xxix]{ConwaySloane-book} for a brief summary of this number theoretical problem). 

We focus our attention on \emph{irreducible} compact symmetric spaces. 
The most challenging case is for those of \emph{group type}, which are of the form $M_K:=\frac{K\times K}{\diag(K)}\simeq K$ endowed with a positive multiple of the Killing metric of $\fk\times\fk$, where $K$ is a compact connected simple Lie group. 
In this case, the symmetric space is isometric to $(K, g_0)$, where $g_0$ is the (unique up to scaling) bi-invariant metric on $K$. 
And, the space of homogeneous metrics on $K$ contains the large space of left-invariant metrics $\mathcal M_{\text{left}}(K)$. 

Gordon, Schueth and Sutton~\cite[Cor.~2.4]{GordonSchuethSutton10} proved that these symmetric spaces are spectrally isolated within $\mathcal M_{\text{left}}(K)$. 

\begin{theorem}[Gordon, Schueth and Sutton, 2010]
Let $K$ be a compact connected simple Lie group. 
Any bi-invariant metric on $K$ is spectrally isolated within $\mathcal M_{\text{left}}(K)$. 
\end{theorem}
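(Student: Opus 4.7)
The natural strategy combines spectral invariance of heat-trace coefficients with a local Morse-type analysis of scalar curvature on the space of left-invariant metrics. First, parametrize $\mathcal M_{\text{left}}(K)$ by writing an arbitrary left-invariant metric as $g(X,Y)=g_0(AX,Y)$ for a unique $g_0$-symmetric positive operator $A$ on $\fk:=\op{Lie}(K)$, so that $g_0\leftrightarrow A=\Id$. Since $K$ is compact and simple, $\Aut(\fk)\subset\Ot(\fk,g_0)$; consequently, the only left-invariant metric close to $g_0$ that is isometric to $g_0$ via a Lie-algebra automorphism is $g_0$ itself. Thus spectral isolation reduces to showing that any $A$ near $\Id$ with $g_A$ isospectral to $g_0$ must equal $\Id$.

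Next, invoke the Minakshisundaram--Pleijel expansion
$$\op{Tr}(e^{-t\Delta_g})\sim (4\pi t)^{-n/2}\sum_{k\geq 0} a_k(g)\,t^k,\qquad n:=\dim K,$$
and recall that $a_0(g)=\vol(K,g)$ and $a_1(g)=\tfrac{1}{6}\scal(g)\vol(K,g)$ are spectral invariants. For a homogeneous metric $g_A$ the curvature scalars are constant, so isospectrality with $g_0$ forces both $\vol(K,g_A)=\vol(K,g_0)$ and $\scal(g_A)=\scal(g_0)$.

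Now view $\scal(g_A)$ as a rational function of $A$ via the standard left-invariant curvature formula. The bi-invariant metric being Einstein says that $\Id$ is a critical point of $\scal$ restricted to the slice $\{\det A=1\}$. The Hessian of $\scal$ at $\Id$ is $\Ad(K)$-equivariant, so it acts by scalars on each isotypical component of the space $\Sym^2_0(\fk)$ of traceless $g_0$-symmetric operators. The Lichnerowicz/Koiso linear-stability computation for the Einstein metric $g_0$ then expresses this Hessian in terms of the Casimir of $\fk$ acting on $\Sym^2_0(\fk)$, and for a compact simple $\fk$ one concludes that it is strictly negative definite. Hence $\Id$ is a strict local maximum of $\scal|_{\det A=1}$, and the two heat-trace constraints force $A=\Id$ in a neighborhood, completing spectral isolation.

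\textbf{Main obstacle.} The technical heart is verifying the strict negative-definiteness of the Hessian of $\scal$ at the bi-invariant metric on an arbitrary compact simple $\fk$, i.e.\ the linear stability of $g_0$ amongst left-invariant Einstein deformations. This rests on lower bounds for the Casimir eigenvalues on $\Sym^2_0(\fk)$, and a uniform treatment either requires a case analysis along the Dynkin classification or a structural argument using branching of the adjoint representation. Should a neutral direction of the Hessian appear in some exceptional algebra, the next invariant $a_2(g)$, involving $|\Rm|_g^2$ and $|\Ric|_g^2$, would have to be brought in to obstruct the remaining directions, at the cost of considerably more intricate algebraic bookkeeping.
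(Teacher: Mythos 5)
The paper does not prove this statement; it is quoted from \cite{GordonSchuethSutton10}, so I am comparing your proposal with the original argument of Gordon--Schueth--Sutton rather than with anything in this paper.

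Your argument has a genuine gap at exactly the point you flag, and the gap is much worse than your closing paragraph suggests. The whole proof rests on the Hessian of $\scal$ at $g_0$, restricted to traceless left-invariant directions, being strictly negative definite for \emph{every} compact simple $\fk$. This is false for $\fk=\su(m)$, $m\geq 3$. Indeed, a traceless left-invariant symmetric $2$-tensor on $(K,g_0)$ is automatically divergence-free (for bi-invariant $g_0$ one computes $\delta h(X)=\tfrac12\tr(\ad_X H)=0$ since $\ad_X$ is skew and $H$ symmetric), hence TT; on the $\sigma$-isotypic component of $\Sym^2_0(\fk)$ the second variation of the normalized total scalar curvature is, by the very Koiso computation you invoke, a negative multiple of $\lambda^{\sigma}-\lambda^{\Ad}=\lambda^\sigma-1$ (normalizing $g_0=\kil_\fk$). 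Now $\Sym^2(\Ad)$ contains a copy of $\Ad$ itself precisely when $\fk=\su(m)$ with $m\geq3$, via $Z\mapsto h_Z$, $h_Z(X,Y)=\mi\,\tr\big(Z(XY+YX)\big)$, and $\lambda^{\Ad}=1$. So for the entire classical family $\SU(m)$, $m\geq3$ --- not merely for "some exceptional algebra" --- the Hessian has an $(m^2-1)$-dimensional kernel: these are exactly Koiso's infinitesimal Einstein deformations of $\SU(m)$, which happen to be realized by left-invariant tensors. Along those directions $\vol$ and $\scal$ are constant to second order, so the two heat invariants $a_0,a_1$ cannot force $A=\Id$. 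Repairing this with $a_2$ is not routine bookkeeping: it amounts to deciding, order by order, the rigidity of the bi-invariant Einstein metric on $\SU(m)$ against its infinitesimal Einstein deformations, a problem that remained open long after 2010. (For all other compact simple $\fk$ your scheme does go through, since $\Ad$ then does not occur in $\Sym^2_0(\Ad)$ and every other constituent has Casimir eigenvalue $>1$.)

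For contrast, the actual proof in \cite{GordonSchuethSutton10} avoids curvature entirely and works uniformly. Writing $g_A(X,Y)=g_0(AX,Y)$, the Laplace eigenvalues of $g_A$ on the $\pi$-isotypic component of $L^2(K)$, $\pi\in\widehat K$, are the eigenvalues of $-\sum_i\pi(A^{-1/2}X_i)^2$ on $V_\pi$; for $A$ near $\Id$ these cluster into bands around the Casimir scalars $\lambda^\pi$, and for finitely many $\pi$ the bands stay disjoint, so isospectrality matches them band by band. The trace of the operator on $V_\pi$ equals $\sum_i B_\pi(A^{-1/2}X_i,A^{-1/2}X_i)$ with $B_\pi(X,Y)=-\tr(\pi(X)\pi(Y))$; since $B_\pi$ is $\Ad(K)$-invariant and $\fk$ is simple, $B_\pi=c_\pi\, g_0$, so the band sum is $c_\pi\tr(A^{-1})$. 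Isospectrality thus gives $\tr(A^{-1})=\dim K$, while the heat invariant $a_0$ gives $\det A=1$; the arithmetic--geometric mean inequality then forces $A=\Id$. This is the step your approach is missing a substitute for on $\SU(m)$.
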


The only cases where a bi-invariant metric on $K$ is known to be spectrally unique within $\mathcal M_{\text{left}}(K)$ are $K=\SU(2),\SO(3)$ (\cite[Thm.~1.4]{LinSchmidtSuttonII}, previously announced in \cite{SchmidtSutton13}; see \cite[Thm.~1.5]{Lauret-SpecSU(2)} for an alternative proof) and $K=\Sp(n)$ for any $n\geq1$ (\cite{Lauret-globalrigid}).

We now turn to irreducible compact symmetric spaces of non-group type.
Within this class, the problem is completely resolved for the compact rank-one symmetric spaces (or CROSSes): i.e., 
the $n$-sphere $\mathbb S^n$, 
real projective space $\R P^n$, 
complex projective space $\C P^n$, 
quaternionic projective space $\Hy P^n$ 
and the Cayley plane $\C aP^2$. 
The homogeneous metrics admitted by each of the CROSSes have been classified (see \cite{MontgomerySamelson43-spheres}, \cite{Borel49}, \cite{Borel50} for spheres and \cite{Onishchik63-rank1} for the rest) and, through an explicit computation of their fundamental tone (i.e., $\lambda_1$), Bettiol, Piccione and the first named author \cite{BLPhomospheres} showed that homogeneous
CROSSes can be mutually distinguished by their spectra.


\begin{theorem}[Bettiol, Lauret and Piccione, 2022]
Two compact rank one symmetric spaces endowed with homogeneous metrics are isospectral if and only if they are isometric. 
\end{theorem}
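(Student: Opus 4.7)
The plan is to treat each rank one compact symmetric space $M$ separately—the spheres $S^n$, the real, complex and quaternionic projective spaces, and the Cayley plane—and to show that the first positive eigenvalue $\lambda_1(M,g)$ of the Laplace--Beltrami operator, combined with the volume and the dimension, separates isometry classes among homogeneous metrics on $M$, and further separates such classes across different choices of $M$. The reverse implication of the theorem is trivial, so only this separation statement is at issue.

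First, for each $M$ I would recall the classification of homogeneous metrics due to Montgomery--Samelson, Borel, and Onishchik. These metrics fit into finite-dimensional parametric families obtained from the decomposition of the isotropy representation under a suitable transitive compact group action: for $S^{2n+1}$ this includes the Berger deformations along the Hopf fibration $S^1\to S^{2n+1}\to \C P^n$; for $S^{4n+3}$ and $\C P^{2n+1}$ the richer structure involves $\Sp(1)$-symmetry along $\Hy P^n$; and $S^{15}$ admits additional homogeneous metrics via $\Spin(9)$.

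Second, for each parametric family I would compute $\lambda_1(M,g)$ explicitly in terms of the metric parameters. Since each metric is invariant under a compact Lie group $G$ acting transitively with isotropy $H$, the Peter--Weyl decomposition of $L^2(M)$ reduces $\Delta_g$ to a direct sum of operators indexed by $G$-representations whose restriction to $H$ contains the trivial type, and on each isotype $\Delta_g$ acts as an explicit linear combination of Casimir operators whose coefficients are prescribed by the metric parameters. The minimum over this (infinite but combinatorially tame) list can be localized by Weyl-dimension bounds and identified with the eigenvalue associated to a short, explicit list of low-rank representations. Third, I would show that the map sending a homogeneous metric $g$ (up to isometry) to the pair $(\vol(M,g),\lambda_1(M,g))$ is injective: within a single family this reduces to an algebraic verification modulo the discrete equivalences coming from reparametrizations, while across manifolds the dimension (a spectral invariant via Weyl's asymptotic law) already separates most cases, with the remaining dimensional coincidences (for instance $S^n$ versus $\R P^n$, or $\Hy P^n$ versus $\C P^{2n+1}$ versus $S^{4n+3}$) handled one by one by direct comparison of the two invariants.

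The main obstacle is producing uniform closed-form expressions for $\lambda_1$ across all the families. This requires a detailed branching-rule analysis to identify which $G$-representations can possibly attain the minimum, together with case-by-case inequalities among Casimir eigenvalues to eliminate the non-minimal candidates. Once these expressions are in hand, the final separation of the pair $(\vol,\lambda_1)$ over the moduli space is essentially a finite algebraic check.
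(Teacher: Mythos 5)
This theorem is not proved in the present paper; it is quoted from \cite{BLPhomospheres}, and your outline does reconstruct the broad strategy of that work (classify the homogeneous metrics, compute $\lambda_1$ via Casimir operators and branching laws, then separate isometry classes by spectral invariants). However, there is a genuine gap in your third step. You claim that the map $g\mapsto(\vol(M,g),\lambda_1(M,g))$ is injective on isometry classes within each family. This cannot work for $S^{4n+3}=\Sp(n+1)/\Sp(n)$: there the isotropy representation is $\mathfrak{sp}(1)\oplus\Hy^n$ with $\Sp(n)$ acting trivially on the three-dimensional summand, so the $\Sp(n+1)$-invariant metrics form, even after diagonalizing by the normalizer, a four-parameter family up to isometry. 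A continuous injection of a four-dimensional moduli space into $\R^2$ (or into $\R^2$ times the discrete data of dimension and eigenvalue multiplicities) is impossible, so two continuous invariants are structurally insufficient; one must bring in further spectral data (additional heat invariants such as the total scalar curvature, and/or the values and multiplicities of the second and higher distinct eigenvalues), which is what the cited proof actually does.

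The same difficulty already shows up, in miniature, in the two-parameter case treated in this paper. In the proof of Theorem~\ref{thm0:disparityScomplex}, in the regime $s^2>2t^2$ the two equations coming from $\vol$ and $\lambda_1$, namely $s^2t^{n-1}=c_1$ and $s^2+(2n-2)t^2=c_2$, admit a second spurious solution, and the authors must invoke the multiplicity of the second distinct eigenvalue (Lemma~\ref{lem1:lambda2}) to exclude it. So even where a dimension count does not immediately rule out your approach, the ``finite algebraic check'' you defer to is not a formality: $(\vol,\lambda_1)$ genuinely fails to separate, and the argument must specify which additional spectral invariants close the gap. Your steps one and two (classification, and the localization of $\lambda_1$ via branching rules and Casimir comparisons) are sound in outline and match the method of the cited paper and of Sections~\ref{sec:case1}--\ref{sec:case2} here.
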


The main difficulty for answering Question~\ref{question} for irreducible symmetric spaces of rank at least $2$ is that the homogeneous metrics on the corresponding underlying differentiable manifolds have not been classified in most of cases. 
At present, if we set aside Lie groups, there are only four irreducible higher rank compact symmetric spaces of non-group type that are known to admit homogeneous metrics other than a symmetric metric:
\begin{itemize}
\item The space of orthogonal complex structures on $\R^{2n}$, which is of type DIII, and we denote by $\Scomplex[n]$. 

\item The space of quaternionic structures on $\C^{2n}$ compatible with the Hermitian metric, which is of type AII, and we denote by $\Squaternionic[n]$.

\item The Grassmannian of oriented real $2$-dimensional subspaces of $\R^7$, which is of type BDI, and we denote by $\Grass{2}{7}$. 

\item The Grassmannian of oriented real $3$-dimensional subspaces of $\R^8$, which is of type BDI, and we denote by $\Grass{3}{8}$. 
\end{itemize}
Their symmetric presentations are 
$\Scomplex[n] = \frac{\SO(2n)}{\Ut(n)}$, 
$\Squaternionic[n] = \frac{\SU(2n)}{\Sp(n)}$, 
$\Grass{2}{7} = \frac{\SO(7)}{\SO(2)\times\SO(5)}$, 
$\Grass{3}{8} = \frac{\SO(8)}{\SO(3)\times\SO(5)}$. 

The known homogeneous metrics on these four spaces are constructed systematically. 
Indeed, let $M$ equal to $\Scomplex[n]$, $\Squaternionic[n]$, $\Grass{2}{7}$, or $\Grass{3}{8}$, and let $\bar G/\bar K$ be the corresponding symmetric presentation of $M$. 
Then there is a closed subgroup $G\subset \bar G$ acting transitively on $\bar G/\bar K$; let $H$ be the isotropy subgroup of this action at the point $o:=e\bar K$. 
Thus, we have $\bar G/\bar K=G/H$ as differentiable manifolds, and the $G$-invariant metrics $\mathcal M_G(G/H)$ on $G/H$ are of course homogeneous. 
Such realizations are as follows:
\begin{equation}\label{eq:realizacionesG/H}
\begin{aligned}
\Scomplex[n] &= \frac{\SO(2n-1)}{\Ut(n-1)},&\quad
\Grass{2}{7} &= \frac{\op{G}_2}{\Ut(2)},\\[1mm]
\Squaternionic[n] &= \frac{\SU(2n-1)}{\Sp(n-1)},&\quad
\Grass{3}{8} &= \frac{\Spin(7)}{\SO(4)}. 
\end{aligned}
\end{equation}
The family $\mathcal M_G(G/H)$ of $G$-invariant metrics on $G/H$, which is easily obtained after decomposing the isotropy representation in irreducible factors, has $2$ parameters for $\Scomplex[n]$ and $3$ parameters for the rest. 

It is a widely held belief that any homogeneous metric on the differentiable manifold $\bar G/\bar K$ is isometric to a metric in $\mathcal M_G(G/H)$. 
Actually, Onishchik~\cite{Onishchik66-inclusion} classified all closed subgroups $G'\subset \bar G$ whose action on $\bar G/\bar K$ is transitive, and the only solution is essentially $G'=G$ as above. 
However, it is currently unknown whether there exists an arbitrary compact Lie group $G'$, not necessarily contained in $\bar G$, acting transitively on $\bar G/\bar K$ (see \cite[Rem.~1.3]{Kerr96}).

Our goal is to decide whether, in each of the four cases, the symmetric metrics are spectrally unique within $\mathcal M_G(G/H)$. 
In this paper we examine the cases $\Scomplex[n]$ and $\Squaternionic[n]$. 
For space reasons, we postpone to a second part \cite{LauretRodriguez-part2} the cases $\Grass{2}{7}$ and $\Grass{3}{8}$. 

\begin{theorem}\label{thm0:disparityScomplex}
Two $\SO(2n-1)$-invariant metrics on $\Scomplex[n]$ are isospectral if and only if they are isometric. 
\end{theorem}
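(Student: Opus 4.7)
The plan is to establish spectral uniqueness by deriving an explicit expression for the smallest positive Laplace eigenvalue $\lambda_1(g)$ on every metric $g$ in the $2$-parameter family $\mathcal M_G(G/H)$ with $G=\SO(2n-1)$ and $H=\Ut(n-1)$, and then combining it with other spectrally determined quantities (most importantly the volume, via Weyl's law) to separate distinct isometry classes. Since the family has only two parameters, two independent spectral invariants should in principle suffice.

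I would first decompose the isotropy representation as $\fp = \fp_1 \oplus \fp_2$ into two inequivalent irreducible $H$-submodules of different dimensions, and write every $G$-invariant metric as $g = s_1 \kil|_{\fp_1} + s_2 \kil|_{\fp_2}$ with $s_1,s_2>0$, where $\kil$ denotes (minus) the Killing form of $\so(2n-1)$. The resulting monomial formula for $\vol(G/H,g)$ in the parameters $s_1,s_2$ is spectrally determined. Next, via Peter--Weyl, each irreducible representation $\pi$ of $G$ with a nontrivial $H$-fixed subspace $V_\pi^H$ contributes to $\Spec(\Delta_g)$ through a positive self-adjoint operator $T_\pi(s_1,s_2)$ on $V_\pi^H$, whose eigenvalues can be written as a linear combination of the Casimir eigenvalue of $\pi$ and ``$\fp_i$-Casimir'' corrections depending on $(s_1,s_2)$. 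After classifying the spherical pairs for $(\SO(2n-1),\Ut(n-1))$ through the relevant branching rule, a finite shortlist of candidate $\pi$ of low Casimir eigenvalue governs $\lambda_1(g)$, and explicit diagonalization of $T_\pi$ on $V_\pi^H$ should yield $\lambda_1(s_1,s_2)$ as a piecewise closed-form function on the moduli space.

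With $\lambda_1(s_1,s_2)$ and $\vol(s_1,s_2)$ both in hand, the proof would conclude by showing that the map $(s_1,s_2)\mapsto(\lambda_1,\vol)$ is injective on $\mathcal M_G(G/H)$: equating the volumes of two isospectral metrics reduces the problem to a $1$-parameter curve in the moduli space, and equating $\lambda_1$ on this curve should force the parameters to coincide. The main obstacle I foresee is the case analysis behind the formula for $\lambda_1$: identifying which spherical representations realize the minimum in each chamber of the parameter space, performing the explicit diagonalization of $T_\pi$ on $V_\pi^H$ when $\dim V_\pi^H > 1$, and finally verifying \emph{global} (rather than just local) injectivity of the resulting piecewise-smooth map, uniformly in $n$. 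Once this is achieved, the concluding algebraic argument comparing $(\lambda_1,\vol)$ on different parameter pairs should be relatively elementary.
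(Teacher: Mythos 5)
Your overall strategy (an explicit formula for $\lambda_1$ obtained from the branching laws, combined with the volume as a second spectral invariant) matches the paper's setup, but the concluding step contains a genuine gap: the map $(s,t)\mapsto(\lambda_1,\vol)$ is \emph{not} injective on the moduli space, so equating these two invariants does not force the parameters to coincide. Concretely, writing the metrics as $g_{s,t}$ as in the paper, one finds $\lambda_1=\tfrac{n}{2n-1}s^2$ when $s^2\le 2t^2$ and $\lambda_1=\tfrac{1}{2n-1}s^2+\tfrac{2n-2}{2n-1}t^2$ when $s^2>2t^2$, while the volume is proportional to $s^{2n}t^{n(n-1)}$. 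In the chamber $s^2>2t^2$ the level set of $\lambda_1$ is the line $s^2+(2n-2)t^2=A$ and the level set of the volume is the curve $s^2t^{n-1}=B$; these generically meet in \emph{two} points, both of which can lie in that chamber (equivalently, the polynomial $(2n-2)x^{n+1}-Ax^{n-1}+B$ can have two positive roots, which then lie on opposite sides of its critical point, corresponding to the line $s^2=4t^2$). So two non-isometric metrics can share both $\lambda_1$ and the volume, and your argument cannot distinguish them.

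The paper closes this gap with two further spectral invariants. First, the \emph{multiplicity} of $\lambda_1$ (which is $2n+1$, $n(2n+1)$ or $(n+1)(2n+1)$ according to the sign of $s^2-2t^2$) pins down which chamber each metric lies in, so two isospectral metrics are in the same chamber; this already settles the case $s^2\le 2t^2$, where $\lambda_1$ determines $s$ and the volume then determines $t$. Second, in the remaining case the two candidate parameter pairs necessarily sit on opposite sides of the line $s^2=4t^2$, and the paper computes the \emph{second} distinct eigenvalue and shows that its multiplicity changes across that line (Lemma~\ref{lem1:lambda2}), yielding the desired contradiction. You would need some analogue of these multiplicity comparisons (or a third independent invariant) to complete the proof; $(\lambda_1,\vol)$ alone is insufficient. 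A minor remark: the explicit diagonalization on $V_\pi^H$ that you flag as an obstacle is automatic here, since both branchings involved ($G\downarrow K$ and $K\downarrow H$) are multiplicity-free, so each eigenvalue is a linear combination of Casimir scalars as in Proposition~\ref{prop:spec2}.
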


\begin{theorem}\label{thm0:spectraluniquenessSquaternionic}
If a $\SU(2n-1)$-invariant metric on $\Squaternionic[n]$ is isospectral to a symmetric metric on $\Squaternionic[n]$, then they are isometric. 
\end{theorem}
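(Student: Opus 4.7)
The natural plan is to use three basic spectral invariants---volume, integrated scalar curvature, and the first positive eigenvalue $\lambda_1$---to pin down the three parameters that define a metric in $\mathcal M_{\SU(2n-1)}(\Squaternionic[n])$. Since isospectrality is assumed with the symmetric metric $g_{\text{sym}}$, it suffices to show that the triple of equations coming from these invariants has the symmetric parameter triple as its unique positive solution.

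First, I would fix an $\Ad(\Sp(n-1))$-invariant reductive complement and decompose it into its three irreducible pieces, obtaining the $3$-parameter family $g=g_{x_1,x_2,x_3}$. The symmetric metric corresponds to a distinguished triple $(x_1^\ast,x_2^\ast,x_3^\ast)$ singled out by extendability from $\SU(2n-1)$ to $\SU(2n)$. Standard formulas then express $\vol(g)$ as a monomial and $\int_{G/H}\scal_g\,dV_g$ as a rational function (a Wang--Ziller sum involving the structure constants between the three isotropy pieces). Equating both to their values at $g_{\text{sym}}$ yields two polynomial equations in $(x_1,x_2,x_3)$.

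Second, I would invoke the explicit formula for $\lambda_1(g_{x_1,x_2,x_3})$ announced in the abstract: it is the minimum over a controlled finite list of $\SU(2n-1)$-irreducible representations $V_\pi$ occurring with a nonzero $\Sp(n-1)$-fixed subspace, and for each such $\pi$ the corresponding candidate eigenvalue is an explicit linear combination $\Lambda_\pi(x_1,x_2,x_3)=\sum_{i=1}^3 \alpha_i(\pi)/x_i$, with coefficients coming from the weights of $\pi$ and from its branching to $\Sp(n-1)$. Imposing $\lambda_1(g)=\lambda_1(g_{\text{sym}})$ then produces the third equation, after identifying which representation $\pi_0$ realizes the minimum at $g_{\text{sym}}$.

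The main obstacle is the piecewise-linear behavior of $\lambda_1$: the parameter space is partitioned into open cones on which different $\pi$ realize the minimum. On the cone containing $(x_1^\ast,x_2^\ast,x_3^\ast)$ the three resulting equations should, after eliminating two variables via the heat invariants, reduce to a one-variable polynomial whose only positive root corresponds to the symmetric point. On any other cone I would aim to show that already along the curve cut by the two heat-invariant equations the corresponding candidate $\Lambda_\pi$ is strictly smaller than $\lambda_1(g_{\text{sym}})=\Lambda_{\pi_0}(x_1^\ast,x_2^\ast,x_3^\ast)$, so that the third equation has no solution there. Identifying $\pi_0$ and controlling the competition between the $\Lambda_\pi$ uniformly in $n$ is where I expect the bulk of the technical work to concentrate; the weaker nature of the statement (isospectrality only with $g_{\text{sym}}$, in contrast with Theorem~\ref{thm0:disparityScomplex} for $\Scomplex[n]$) reflects exactly this extra difficulty of the $3$-parameter setting.
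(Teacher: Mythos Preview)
Your overall plan is reasonable, but it diverges from the paper's argument at the decisive step, and the divergence is exactly where you anticipate the difficulty. The paper does \emph{not} use integrated scalar curvature as a third invariant. Instead, the extra information is extracted from the \emph{multiplicity} of $\lambda_1$. At the symmetric metric the first eigenvalue is attained simultaneously by four of the six candidate triples listed in Theorem~\ref{thm2:lambda1} (namely the two with $\Lambda=\omega_1,\omega_{2n}$ and the two with $\Lambda=\omega_2,\omega_{2n-1}$), so its multiplicity is $2(2n+1)+2n(2n+1)=(2n+1)(2n+2)$. Since multiplicity is a spectral invariant, any isospectral $g_r$ must have $\lambda_1(g_r)$ of that same multiplicity; a short Diophantine check on the dimensions of the six candidate representations shows the only way to obtain $(2n+1)(2n+2)$ is with exactly the same four triples. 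This forces the \emph{two} independent equations
\[
\lambda^{(0,1,e_1)}(r)=\lambda_1(g_{\mathrm{sym}}),\qquad
\lambda^{(e_1,0,0)}(r)=\lambda_1(g_{\mathrm{sym}}),
\]
which are linear in $r_1^2,r_2^2,r_3^2$ and already cut the three-parameter family down to a one-parameter curve. The volume then provides the remaining equation, and a short monotonicity argument finishes.

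The virtue of this route is that the multiplicity trick entirely dissolves the piecewise-linear obstacle you correctly identified: rather than partitioning parameter space into cones and arguing case by case, the multiplicity pins down \emph{which} representations realize $\lambda_1(g_r)$ before any analysis begins. Your scalar-curvature approach may be salvageable, but it leaves you with the cone-by-cone analysis you were worried about, and the hoped-for inequality (that on foreign cones the competing candidate stays strictly below $\lambda_1(g_{\mathrm{sym}})$ along the heat-invariant curve, uniformly in $n$) is exactly the sort of statement that is hard to establish cleanly. The paper trades all of that for a finite arithmetic check on representation dimensions.
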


For an arbitrary compact connected Riemannian manifold $(M,g)$, we will denote by $0=\lambda_0(M,g)<\lambda_1(M,g)\leq \lambda_2(M,g)\leq \dots$ the eigenvalues of the associated Laplace-Beltrami operator. 
The smallest positive eigenvalue $\lambda_1(M,g)$ is usually called \emph{the first eigenvalue} or \emph{fundamental tone}. 

The main tools in the proofs of Theorems~\ref{thm0:disparityScomplex}--\ref{thm0:spectraluniquenessSquaternionic} are explicit expressions for the first eigenvalue of every $G$-invariant metric on $\Scomplex[n]$ and $\Squaternionic[n]$ discussed above (Theorem~\ref{thm1:1steigenvalue} and \ref{thm2:lambda1}). 
These expressions are of interest in themselves due to the analytical importance of $\lambda_1(M,g)$ (e.g.\ in Poincaré inequality) as well as it gives strong insight into the geometry of the manifold and has several applications (e.g.\ on harmonic maps, minimal submanifolds, Yamabe problem, stability of Einstein manifolds).

It is well known that the full spectrum of the Laplacian on a \emph{normal} homogeneous Riemannian manifold can be computed in Lie theoretical terms (see e.g.\ \cite[\S5.6]{Wallach-book}, \cite{BerestovskiiSvirkin}). 
However, generic metrics on $\mathcal M_G(G/H)$ for the homogeneous spaces as in \eqref{eq:realizacionesG/H} are not normal. 
We next review some previous calculations of $\lambda_1(M,g)$ for non-normal homogeneous metrics. 

Urakawa~\cite{Urakawa79} obtained $\lambda_1(G,g_t)$ for certain 1-parameter curve $g_t$ of left-invariant metrics on an arbitrary compact semisimple Lie group $G$, showing that in general the functional 
\begin{equation}
g\longmapsto \lambda_1(M,g)\vol(M,g)^{2/\dim M}
\end{equation}
is not necessarily bounded from above when $\dim M\geq3$, unlike for surfaces. 
An important particular case was the Berger $3$-spheres by taking $G=S^3\cong\SU(2)$ (these metrics are normal with respect to an action of $\SU(2)\times S^1$).
This situation was extended by Tanno~\cite{Tanno79,Tanno80} to higher dimensional Berger spheres (i.e.\ metrics in the canonical variation of the Hopf fibration $S^{2n+1}\to P^n(\C)$) and also to metrics lying in the canonical variation of the Hopf fibration $S^{4n+3}\to P^n(\Hy)$. 
The first Laplace eigenvalue of any metric in a canonical variation of any Hopf fibration was obtained by Bettiol and Piccione~\cite{BettiolPiccione13a}. 
Finally, an expression for $\lambda_1(M,g)$ for the rest of homogeneous metrics $g$ on the underlying differentiable manifolds $M$ of compact rank one symmetric spaces was obtained in \cite{BLPhomospheres}. 

An explicit expression for the smallest positive eigenvalue of the Laplace-Beltrami operator associated to an arbitrary left-invariant metric on a simple Lie group $G$ does not sound feasible if $\dim G>3$. 
Therefore, restricting our attention to compact irreducible symmetric spaces of non-group type, the $G$-invariant metrics on $G/H$ as in \eqref{eq:realizacionesG/H} are the only remaining homogeneous metrics available so far to determine the first Laplace eigenvalue, and Theorems~\ref{thm1:1steigenvalue} and \ref{thm2:lambda1} solve it except for two single cases, namely $\Grass{2}{7}$ and $\Grass{3}{8}$.

We next redirect our attention to an application. 
It is of interest to decide, for an Einstein manifold $(M,g)$, whether the second variation of the $\nu$-entropy defined by Perelman is semi-negative definite or not; $(M,g)$ has been called \emph{$\nu$-stable} and \emph{$\nu$-unstable} respectively (see \cite{CaoHe15}). 
Similarly, an Einstein manifold with positive scalar curvature $(M,g)$ is called \emph{dynamically stable} if for any metric $g'$ near $g$, the normalized Ricci flow starting at $g'$ exists for all $t\geq0$ and converges modulo diffeomorphisms to an Einstein metric near $g$, as $t\to\infty$. 
On the other hand, $(M,g)$ is said to be \emph{dynamically unstable} if there exists a non-trivial normalized Ricci flow defined on $(-\infty,0]$ which converges modulo diffeomorphisms to $g$ as $t\to-\infty$. 
See \cite[Def.~1.1]{Kroencke15}. 

Cao, Hamilton and Ilmanen (see \cite[Thm.~1.1]{CaoHe15}) proved that a compact Einstein manifold $(M,g)$ is $\nu$-unstable if
\begin{equation}
\lambda_1(M,g)<2E,
\end{equation} 
where $E$ is the Einstein constant of $(M,g)$ (i.e.\ $\Rc(g)=Eg$). 
Furthermore, Kröncke~\cite[Thm.~1.3]{Kroencke15} proved that a $\nu$-unstable Einstein manifold of positive scalar curvature is necessarily dynamically unstable. 

The $G$-invariant Einstein metrics on the spaces \eqref{eq:realizacionesG/H} were classified by Ker~\cite{Kerr96}. 
She showed that every $G$-invariant Einstein metric on $\Squaternionic[n]$ is symmetric, and that (up to homotheties) there is only one non-symmetric $G$-invariant Einstein metric on $\Scomplex[n]$. 
The $\nu$-stability for irreducible compact symmetric spaces was decided by Cao and He~\cite{CaoHe15}.
Theorem~\ref{thm1:nu-unstable} shows that a non-symmetric $G$-invariant Einstein metric on $\Scomplex[n]$ is $\nu$-unstable, and consequently dynamically unstable.

\begin{remark}
One can use the explicit expression of $\lambda_1(G/H,g)$ for any $G$-invariant metric on $G/H=\Scomplex[n],\Squaternionic[n]$ as in \eqref{eq:realizacionesG/H} from Theorems~\ref{thm1:1steigenvalue} and \ref{thm2:lambda1} for other applications, rather than the $\nu$-stability of $G$-invariant Einstein manifolds.
For instance, by detecting the metrics in $\mathcal M_G(G/H)$ satisfying that 
\begin{equation}\label{eq0:Yamabe}
\lambda_1(G/H,g)>\frac{\scal(M,g)}{\dim M-1},
\end{equation}
one classifies the stable solutions in $\mathcal M_G(G/H)$ to the Yamabe problem (see e.g.\ \cite{BettiolPiccione13a}).
More precisely, any metric $g\in \mathcal M_G(G/H)$ has constant scalar curvature for being homogeneous, and furthermore, if \eqref{eq0:Yamabe} holds, then $g$ is a local minimizer of the normalized total scalar curvature functional restricted to its conformal class $[g]$. 
We omit the description of the metrics $g\in \mathcal M_G(G/H)$ such that \eqref{eq0:Yamabe} holds for shortness. 

Similarly, if $\frac{\scal(M,g)}{\dim M-1}\in \Spec(M,g)$ (e.g.\ $\lambda_1(G/H,g)=\frac{\scal(M,g)}{\dim M-1}$), then there exist multiple solutions of the Yamabe problem within the conformal class $[g]$ near $g$. 

The analogous study on homogeneous metrics on compact rank one symmetric spaces was done in \cite{BettiolPiccione13a, Lauret-SpecSU(2), BLPhomospheres}. 
\end{remark}

The paper is organized as follows. 
Section~\ref{sec:spectra} introduces the preliminaries about the spectrum of homogeneous Riemannian manifolds. 
The cases $\Scomplex[n]$ and $\Squaternionic[n]$ are discussed in Sections~\ref{sec:case1} and \ref{sec:case2} respectively, which are divided into several subsections to facilitate the reading.

\subsection*{Acknowledgments}
The authors wish to express their gratitude to the referee for a thorough reading and many detailed corrections that have improved the article. 
The first-named author was supported by grants from FONCyT (PICT-2018-02073 and PICT-2019-01054), CONICET (PIP 11220210100343CO) and SGCYT--UNS. 
The second-named author was supported by Pontificia Universidad Javeriana through the research project ID 20500 of the call for postdoctoral positions.

\section{Spectra of homogeneous Riemannian manifolds}\label{sec:spectra}

In this section we introduce basic concepts about the spectrum of a homogeneous Riemannian manifold based on Lie Theory, including Casimir operators. 
The last two subsections restrict the attention to the situations used in Sections~\ref{sec:case1}--\ref{sec:case2}.

\subsection{General case}\label{subsec:generalcase}
Let $G$ be a compact Lie group and $H\subset G$ a closed subgroup, with Lie algebras $\fg$ and $\fh$ respectively. 
Let $\fp$ be an $\Ad(H)$-invariant complement of $\fh$ in $\fg$, thus $\fg=\fh\oplus\fp$ and $[\fh,\fp]\subset\fp$. 
It is well known that the space of $G$-invariant metrics on the homogeneous space $G/H$ is in correspondence with the space of $\Ad(H)$-invariant inner products on $\fp$.

\begin{notation}\label{notation:unitarydual}
For $L$ a compact Lie group with Lie algebra $\fl$, we denote by $\widehat L$ the unitary dual of $L$, that is, the equivalence classes of irreducible representations of $L$. 
For a representation $\pi:L\to \GL(V)$ of $L$, we will usually write $V_\pi=V$, the underlying vector space of $\pi$. 
If $J$ is a Lie subgroup of $L$ we denote by $V_\pi^J$ the fixed vectors of $J$.
Let $\widehat L_J =\{\pi\in\widehat L: \dim V_\pi^J> 0\}$, the set of \emph{spherical representations of the pair $(L,J)$}. 
\end{notation}

We fix a $G$-invariant metric $g$ on $G/H$ associated with the $\Ad(H)$-invariant inner product $\innerdots$ on $\fp$.  
The spectrum of the Laplace-Beltrami operator $\Delta_g$ associated to $(G/H,g)$ is given by (see e.g.\ \cite[Prop.~2.1]{BLPhomospheres})
\begin{equation}\label{eq:spec}
\Spec(G/H, g) :=\Spec(\Delta_g) 
= \bigcup_{\pi \in\widehat G_H} 
\Big\{\!\!\Big\{ 
	\underbrace{\lambda_j^{\pi}(g),\dots, \lambda_j^{\pi}(g)}_{d_\pi\text{\rm-times}}: 1\leq j\leq d_\pi^H
\Big\}\!\!\Big\},
\end{equation}
where 
$d_\pi=\dim V_\pi$, $d_\pi^H= \dim V_\pi^H$, and $\lambda_1^{\pi}(g), \dots,\lambda_{d_\pi^H}^{\pi}(g)$ are the eigenvalues of the self-adjoint linear endomorphism
\begin{equation}\label{eq:pi(-C_g)}
-\textstyle{\sum\limits_{i=1}^{\dim\fp} }\pi(X_i)^2 
\Big|_{V_{\pi}^H} 
: V_{\pi}^H\longrightarrow V_{\pi}^H,
\end{equation} 
where $\{X_1,\dots,X_{\dim\fp}\}$ is any orthonormal basis of $\fp$ with respect to $\innerdots$.

\begin{remark}
Some remarks are in order. 
\begin{enumerate}

\item $\dim V_\pi^H = \dim V_{\pi^*}^H$ for all $\pi\in\widehat G$, where $\pi^*$ denotes the contragradient representation of $\pi$. 

\item The operator in \eqref{eq:pi(-C_g)} does not depend on the orthonormal basis $\{X_1,\dots,X_{\dim\fp}\}$ on $\fp$ with respect to $\innerdots$.
However, element $\sum_{i=1}^{\dim\fp} X_i^2$ in the universal enveloping algebra $\mathcal U(\fg)$ depends on the choice of the basis $\{X_1,\dots,X_{\dim\fp}\}$. 
However, as an abuse of notation, we write $C_g=\sum_{i=1}^{\dim\fp} X_i^2$ in order to use $\pi(-C_g)|_{V_\pi^H}$ in place of \eqref{eq:pi(-C_g)}. 
\end{enumerate}
\end{remark}

Let $\kil$ be an $\Ad(G)$-invariant inner product on $\fg$, which exists because $G$ is compact (e.g.\ a negative multiple of the Killing form if $\fg$ is semisimple). 
We fix a $\kil$-orthogonal decomposition
\begin{equation}\label{eq:decomposition-p}
\fp=\fp_1\oplus\dots\oplus \fp_m
\end{equation} 
with each $\fp_i$ a subspace of $\fp$ invariant by $H$.
We are not assuming that any $\fp_i$ is $H$-irreducible. 

For any $r=(r_1,\dots,r_m)\in\R_{>0}^m$, we set
\begin{equation}
\innerdots_r 
= \frac{1}{r_1^2} \kil|_{\fp_1} 
\oplus \dots
\oplus \frac{1}{r_m^2} \kil|_{\fp_m}
.
\end{equation}
It turns out that $\innerdots_r$ is $\Ad(H)$-invariant, so it induces a $G$-invariant metric $g_r$ on $G/H$. 
For each $h=1,\dots,m$, let $\{X_1^{(h)},\dots,X_{p_h}^{(h)}\}$ be an orthonormal basis of $\fp_h$ with respect to $\kil|_{\fp_h}$ ($p_h=\dim \fp_h$).
Note that $\bigcup_{h=1}^m \{r_h X_1^{(h)},\dots,r_h X_{p_h}^{(h)}\}$ is an orthonormal basis of $\fp$ with respect to $\innerdots_r$. 
Moreover, 
\begin{equation}\label{eq:pi(C_r)}
\pi(-C_{g_r})=-\sum_{h=1}^m \sum_{i_h=1}^{p_h} r_h^2\,  \pi(X_{i_h}^{(h)})^2
.
\end{equation}

In the last two subsections of this section we will consider particular decompositions of $\fp$ that will be used later. 
Before that, we recall Freudenthal formula, which will be useful later.

\subsection{Casimir operators}\label{subsec:Freudenthal}

Let $L$ be a compact Lie group with Lie algebra $\fl$
and let $\kil$ be any $\Ad(L)$-invariant inner product on $\fl$.

We denote by $\Cas_{\fl,\kil}$ the \emph{Casimir element} of $\fl$ with respect to $\kil$, that is, $\Cas_{\fl,\kil}=Y_1^2+\dots+Y_n^2 \in \mathcal U(\fl)$ for any orthonormal basis $\{Y_1,\dots,Y_{\dim \fl}\}$ of $\fl$ with respect to $\kil$ (this definition does not depend on the basis).  
By Schur's Lemma, $\pi(-\Cas_{\fl,\kil})$ acts by a scalar $\lambda_\kil^\pi$ for any irreducible representation $\pi$ of $L$.

When $\fl$ is semisimple, we denote by $\kil_\fl$ the opposite of the Killing form, that is, $\kil_\fl(X,Y) =-\tr(\ad(X)\circ\ad(Y))$ for $X,Y\in\fl$.
Furthermore, we abbreviate $\lambda^\pi=\lambda_{\kil_\fl}^\pi$.

We fix a maximal torus $T$ and a positive root system $\Phi^+(\fl_\C,\ft_\C)$. 
By the Highest Weight Theorem, the classes of irreducible representations $\widehat L$ of $L$ are in correspondence with the set of $L$-integral dominant weights $\PP^+(L)$.
For $\pi\in \widehat L$ with highest weight $\Lambda\in \PP^+(L)$, one has that (see e.g.\ \cite[Lem.~5.6.4]{Wallach-book})
\begin{equation}\label{eq:Freudenthal}
\lambda^{\pi} = \kil_\fl^* \big( \Lambda,\Lambda+2\rho_\fl \big)
,
\end{equation}
where $\rho_\fl=\frac12\sum_{\alpha\in\Phi^+(\fl_\C,\ft_\C)} \alpha $ and $\kil_\fl^*$ is the extension from $\ft_\C$ to $\ft_\C^*$ by $\kil_\fl^*(\alpha,\beta) =\kil_\fl(u_\alpha,u_\beta)$, where $u_\alpha$ for $\alpha \in\ft_\C^*$ denotes the only element in $\ft_\C$ such that $\kil_\fl(H,u_\alpha)=\alpha(H)$ for all $H\in\ft_\C$. 

It is worthwhile to mention that $\lambda^{\Ad}=1$, where $\Ad$ denotes the adjoint representation of $L$. 

We assume in addition that $J$ is a simple subgroup of $L$ with Lie algebra $\fj$ and $T\cap J$ is a maximal torus of $J$. 
Since $\kil_{\fj}$ is the only $\Ad(J)$-invariant inner product on $\fj_\C$ up to scaling, there is $c>0$ such that $\kil_\fj=c\kil_\fl|_{\fj_\C}$. 
It follows that
\begin{equation}\label{eq:B_J^*=1/cB_L^*}
\kil_\fj^*=\frac1c \, \kil_\fl^*|_{(\ft\cap\fj)_\C^*}. 
\end{equation}
Moreover, if $\{X_i:1\leq i\leq \dim \fj\}$ is an orthonormal basis of $\fj$ with respect to $\kil_\fl$, then $\{\frac{1}{\sqrt{c}}X_i:1\leq i\leq \dim \fj\}$ is an orthonormal basis of $\fj$ with respect to $\kil_\fj$.
Hence $\Cas_{\fj,\kil_\fl} = c \Cas_{\fj,\kil_\fj}$ and 
\begin{equation}\label{eq:FreudenthalB_k}
\lambda_{\kil_\fl|_\fj}^\tau = c \lambda_{\kil_\fj}^{\tau} = c \lambda^{\tau}
\qquad\text{for any $\tau\in\widehat J$}.
\end{equation}

\subsection{Two intermediate subgroups}\label{subsec:cases1-2}
We now focus on a particular decomposition of the isotropy representation $\fp$ that will appear in Section \ref{sec:case2}. 
Assume there exist closed subgroups $K,K'$ of $G$, with Lie algebras $\fk,\fk'$, such that 
\begin{equation*}
H\subset K'\subset K\subset G.
\end{equation*}
There exist subspaces $\fp_1,\fp_2,\fp_3$ of $\fg$ with $\kil_{\fg}$-orthogonal decompositions
\begin{align}
\fg &=\fk\oplus\fp_1,&
\fk &=\fk'\oplus\fp_2, &
\fk'&=\fh\oplus\fp_3.
\end{align}
Thus, $\fp=\fp_1\oplus\fp_2\oplus \fp_3$ is an orthogonal decomposition with respect to $\kil$.

\begin{lemma}\label{lem:pi(C_g_r)-caso2}
For $r\in\R_{>0}^3$, $\pi\in\widehat G$ and $v\in V_\pi^H$, we have that
\begin{equation}
\begin{aligned}
\pi(-C_{g_r})\cdot v &
= r_1^2\, \pi\big(-\Cas_{\fg,\kil}\big) \cdot v
+ (r_2^2-r_1^2) \, \pi\big(-\Cas_{\fk,\kil|_\fk}\big) \cdot v
\\ & \quad 
+ (r_3^2-r_2^2)\, \pi\big(-\Cas_{\fk',\kil|_{\fk'}}\big) \cdot v
.
\end{aligned}
\end{equation}
\end{lemma}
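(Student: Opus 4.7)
The plan is essentially a bookkeeping exercise: choose a single $\kil$-orthonormal basis of $\fg$ that is the concatenation of $\kil|_\fh$-, $\kil|_{\fp_3}$-, $\kil|_{\fp_2}$-, and $\kil|_{\fp_1}$-orthonormal bases, and rewrite $C_{g_r}$ in terms of Casimir elements of the chain $\fh\subset\fk'\subset\fk\subset\fg$. Concretely, set
\begin{equation*}
S_h := \sum_{i_h=1}^{p_h} \bigl(X_{i_h}^{(h)}\bigr)^2 \in\mathcal U(\fg), \qquad h=1,2,3,
\end{equation*}
so that by \eqref{eq:pi(C_r)} we have $C_{g_r}=r_1^2 S_1+r_2^2 S_2+r_3^2 S_3$. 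Since the chosen basis of $\fg$ is adapted to the nested $\kil$-orthogonal decompositions $\fg=\fk\oplus\fp_1$, $\fk=\fk'\oplus\fp_2$, $\fk'=\fh\oplus\fp_3$, summing squares yields
\begin{align*}
\Cas_{\fg,\kil} &= \Cas_{\fh,\kil|_\fh}+S_3+S_2+S_1,\\
\Cas_{\fk,\kil|_\fk} &= \Cas_{\fh,\kil|_\fh}+S_3+S_2,\\
\Cas_{\fk',\kil|_{\fk'}} &= \Cas_{\fh,\kil|_\fh}+S_3,
\end{align*}
so I can solve for $S_1,S_2,S_3$ as telescoping differences of consecutive Casimirs.

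Substituting these expressions and regrouping by the three Casimir coefficients gives, in $\mathcal U(\fg)$,
\begin{equation*}
-C_{g_r} = -r_1^2\Cas_{\fg,\kil} - (r_2^2-r_1^2)\Cas_{\fk,\kil|_\fk} - (r_3^2-r_2^2)\Cas_{\fk',\kil|_{\fk'}} + r_3^2\Cas_{\fh,\kil|_\fh}.
\end{equation*}
To finish, I would apply $\pi$ to both sides and evaluate at $v\in V_\pi^H$. Since $H$ acts trivially on $V_\pi^H$, differentiating gives $\pi(X)v=0$ for every $X\in\fh$, hence $\pi(Y_j)^2 v=0$ for any $\kil|_\fh$-orthonormal basis $\{Y_j\}$ of $\fh$, and therefore $\pi(\Cas_{\fh,\kil|_\fh})v=0$. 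This kills the last term and produces exactly the claimed identity. No step looks delicate; the only thing to watch is the consistent use of $\kil$ (rather than $\kil_\fg$) both for the orthogonal decompositions and for the Casimir elements, which makes the three displayed Casimir expansions hold with the same $\Cas_{\fh,\kil|_\fh}$ summand that is annihilated at the end.
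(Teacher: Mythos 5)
Your proposal is correct and is essentially the paper's own argument: both rest on the telescoping identity coming from the nested $\kil$-orthogonal decompositions $\fh\subset\fk'\subset\fk\subset\fg$ together with the fact that $\pi(X)v=0$ for $X\in\fh$ when $v\in V_\pi^H$, so that the $\Cas_{\fh,\kil|_\fh}$ contributions disappear. The only cosmetic difference is that you carry a single $r_3^2\,\Cas_{\fh,\kil|_\fh}$ remainder in $\mathcal U(\fg)$ and kill it once at the end, whereas the paper regroups $\pi(-C_{g_r})v$ into three partial sums and completes each to a Casimir acting on $v$ separately.
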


\begin{proof}
It follows from \eqref{eq:pi(C_r)} that 
\begin{equation*}
\begin{aligned}
\pi(-C_{g_r})\cdot v &
= r_1^2\, \left(
	-\sum_{i_1=1}^{p_1} \pi(X_{i_1}^{(1)})^2 
	-\sum_{i_2=1}^{p_2} \pi(X_{i_2}^{(2)})^2 
	-\sum_{i_3=1}^{p_3} \pi(X_{i_3}^{(3)})^2
\right) \cdot v
\\ &\quad 
+ (r_2^2-r_1^2) \left(
	-\sum_{i_2=1}^{p_2} \pi(X_{i_2}^{(2)})^2
	-\sum_{i_3=1}^{p_3} \pi(X_{i_3}^{(3)})^2 
\right)\cdot v
\\ &\quad 
- (r_3^2-r_2^2) \sum_{i_3=1}^{p_3} \pi(X_{i_3}^{(3)})^2 \cdot v
.
\end{aligned}
\end{equation*}
Let $\mathcal B_\fh=\{Y_1,\dots,Y_{\dim\fh}\}$ be an orthonormal basis of $\fh$ with respect to $\kil|_{\fh}$. 
Clearly, $\{X_{i_3}^{(3)}: 1\leq i_3\leq p_3\}\cup \mathcal B_\fh$ is an orthonormal basis of $\fk'$ with respect to $\kil|_{\fk'}$. 
Since $\pi(Y_i)\cdot v=0$ for all $i=1,\dots,\dim\fh$, it follows that $\pi(\Cas_{\fk',\kil|_{\fk'}})= \sum_{i_3=1}^{p_3} \pi(X_{i_3}^{(3)})^2 \cdot v$.
Similar arguments on the first and second row imply the required identity. 
\end{proof}

\begin{notation}\label{notation:dimHom}
For $L$ as in Notation~\ref{notation:unitarydual}, and $\pi_1$ and $\pi_2$ representations of $L$, we abbreviate $[\pi_1,\pi_2]=\dim \Hom_L(V_{\pi_1},V_{\pi_2})$. 
If $\pi_1$ is irreducible, $\pi_1$ occurs $[\pi_1,\pi_2]$ times in the decomposition of $\pi_2$ as irreducible $L$-submodules. 
\end{notation}

We thus obtain from \eqref{eq:spec} and Lemma~\ref{lem:pi(C_g_r)-caso2} the following description for $\Spec(G/H,g_r)$ that will be used in Section~\ref{sec:case2}. 

\begin{proposition}\label{prop:spec3}
For $r=(r_1,r_2,r_3)\in\R_{>0}^3$, 
the spectrum of the Laplace-Beltrami operator of  $(G/H,g_r)$ is given by the eigenvalues
\begin{equation}
\lambda^{\pi,\tau,\tau'}(r)
:= r_1^2\, \lambda_{\kil}^{\pi} 
+ (r_2^2-r_1^2) \, \lambda_{\kil|_\fk}^{\tau}
+ (r_3^2-r_2^2)\, \lambda_{\kil|_{\fk'}}^{\tau'}
\end{equation}
with multiplicity $d_\pi [1_{H}:\tau'|_{H}] [\tau':\tau|_{K'}] [\tau:\pi|_K]$, for each $(\pi,\tau,\tau')\in\widehat G\times \widehat K\times\widehat K'$. 
\end{proposition}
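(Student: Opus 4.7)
The plan is to combine the general spectrum formula \eqref{eq:spec} with Lemma~\ref{lem:pi(C_g_r)-caso2} and then diagonalize the resulting operator on $V_\pi^H$ by branching through the subgroup tower $H\subset K'\subset K\subset G$.

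First, fix $\pi\in\widehat G$. By \eqref{eq:spec}, the contribution of $\pi$ to $\Spec(G/H,g_r)$ consists of the eigenvalues of $\pi(-C_{g_r})|_{V_\pi^H}$, each repeated with multiplicity $d_\pi$. By Lemma~\ref{lem:pi(C_g_r)-caso2}, on $V_\pi^H$ this endomorphism equals
\begin{equation*}
r_1^2\, \pi(-\Cas_{\fg,\kil}) + (r_2^2-r_1^2)\,\pi(-\Cas_{\fk,\kil|_\fk}) + (r_3^2-r_2^2)\,\pi(-\Cas_{\fk',\kil|_{\fk'}}).
\end{equation*}
The first summand acts on the whole of $V_\pi$ (and hence on $V_\pi^H$) as the scalar $\lambda_{\kil}^\pi$ by Schur's lemma. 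The remaining two summands are proportional to Casimir operators of the subgroups $K$ and $K'$, which are not scalar on $V_\pi$ but become scalar after decomposing $V_\pi$ according to those subgroups.

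Second, I would simultaneously diagonalize these three operators by a stepwise branching. Decompose
\begin{equation*}
V_\pi|_K \;\cong\; \bigoplus_{\tau\in\widehat K} [\tau:\pi|_K]\, V_\tau,
\qquad
V_\tau|_{K'} \;\cong\; \bigoplus_{\tau'\in\widehat{K'}} [\tau':\tau|_{K'}]\, V_{\tau'}.
\end{equation*}
On each $K$-isotypic piece of type $\tau$, the operator $\pi(-\Cas_{\fk,\kil|_\fk})$ acts by the scalar $\lambda_{\kil|_\fk}^\tau$; on each $K'$-isotypic piece of type $\tau'$ inside such a $\tau$-component, $\pi(-\Cas_{\fk',\kil|_{\fk'}})$ acts by $\lambda_{\kil|_{\fk'}}^{\tau'}$. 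Taking $H$-fixed vectors in each irreducible $K'$-summand gives a subspace of dimension $[1_H:\tau'|_H]$, and summing over the multiplicities in the chain produces a subspace of $V_\pi^H$ of dimension $[1_H:\tau'|_H][\tau':\tau|_{K'}][\tau:\pi|_K]$ on which $\pi(-C_{g_r})$ acts as the scalar
\begin{equation*}
\lambda^{\pi,\tau,\tau'}(r) = r_1^2\lambda_{\kil}^\pi + (r_2^2-r_1^2)\lambda_{\kil|_\fk}^\tau + (r_3^2-r_2^2)\lambda_{\kil|_{\fk'}}^{\tau'}.
\end{equation*}

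Third, summing these contributions over $\tau\in\widehat K$ and $\tau'\in\widehat{K'}$ exhausts $V_\pi^H$ (since $V_\pi^H\subset V_\pi$ is spanned by its intersections with the $K'$-isotypic components), and then summing over $\pi\in\widehat G$ together with the overall $d_\pi$-multiplicity from \eqref{eq:spec} gives the stated formula. There is no real obstacle here beyond bookkeeping: the key point to verify carefully is that the three Casimir operators commute on $V_\pi^H$ and that the basis obtained from the chain of branchings simultaneously diagonalizes them, which is immediate from the fact that $K'$-irreducible subspaces refine $K$-irreducible subspaces, both of which are $\Cas_\fg$-stable.
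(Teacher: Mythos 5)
Your proposal is correct and follows essentially the same route as the paper: apply \eqref{eq:spec}, rewrite $\pi(-C_{g_r})$ via Lemma~\ref{lem:pi(C_g_r)-caso2}, branch $V_\pi$ through $K$ and then $K'$, use Schur's lemma on each isotypic piece, and count dimensions of $H$-fixed vectors. No gaps.
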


\begin{proof}
As a representation of $K$, $\pi$ decomposes as 
$
V_\pi \simeq \bigoplus_{\tau\in\widehat K} [\tau:\pi|_{K}]\, V_\tau. 
$
Repeating this argument, $
V_\pi \simeq \bigoplus_{(\tau,\tau')\in\widehat K\times \widehat K'} 
[\tau:\pi|_{K}][\tau':\tau|_{K'}] \,  V_{\tau'}
$
as a representation of $K'$. 
Since $H\subset K'$, it follows that 
\begin{equation}
\begin{aligned}
V_\pi^H 
\simeq \bigoplus_{(\tau,\tau')\in\widehat K\times \widehat K'} 
[\tau:\pi|_{K}][\tau':\tau|_{K'}] \,  
V_{\tau'}^H
.
\end{aligned}
\end{equation}

Lemma~\ref{lem:pi(C_g_r)-caso2} implies that the operator $\pi(-C_{g_r})$ restricted to the subspace of $V_\pi^H$ associated to $(\tau,\tau')\in\widehat K\times \widehat K'$ as above acts by the scalar $\lambda^{\pi,\tau,\tau'}(r)$. 
Since such subspace has dimension $[\tau:\pi|_{K}][\tau':\tau|_{K'}] \,  
\dim V_{\tau'}^H = [\tau:\pi|_{K}][\tau':\tau|_{K'}] [1_H:\tau'|_H]$, the claim follows by \eqref{eq:spec}.
\end{proof}

\begin{remark}
In Proposition~\ref{prop:spec3} it is sufficient to consider  $(\pi,\tau,\tau')\in\widehat G\times\widehat K\times \widehat K'$ satisfying $[\tau:\pi|_{K}][\tau':\tau|_{K'}] [1_H:\tau'|_H]>0$. 
In particular $\tau'\in\widehat K'_H$. 
\end{remark}

\subsection{One intermediate subgroup}\label{subsec:case1}
We assume that $K=K'$ in the hypothesis of the previous subsection. 
This removes the variable $r_2$ since $\fp_2=0$. 
We use the new variables $s=r_1$ and $t=r_3$, so we associate to positive real numbers $s,t$ the $G$-invariant metric $g_{s,t}$ on $G/H$ induced by 
\begin{equation}\label{eq:g_{s,t}}
\innerdots_{s,t} 
:= \frac{1}{s^2} \kil|_{\fp_1} 
\oplus \frac{1}{t^2} \kil|_{\fp_3}
.
\end{equation}
The following description of $\Spec(G/H,g_{s,t})$ was shown in \cite[Thm.~2.4]{BLPfullspec} and will be used in Section~\ref{sec:case1}.

\begin{proposition}\label{prop:spec2}
For $s,t>0$, the spectrum of the Laplace-Beltrami operator of  $(G/H,g_{s,t})$ is given by the eigenvalues
\begin{equation}
\lambda^{\pi,\tau}(s,t)
:= s^2\, \lambda_{\kil}^{\pi} 
+ (t^2-s^2)\, \lambda_{\kil|_{\fk}}^{\tau}
\end{equation}
with multiplicity $d_\pi [1_{H}:\tau|_{H}] [\tau:\pi|_K]$, for each $(\pi,\tau)\in\widehat G\times \widehat K$. 
\end{proposition}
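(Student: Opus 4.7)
The plan is to reduce this to a direct specialization of the machinery already developed for the three-subgroup case, and in fact the proof is essentially the same as the proof of Proposition~\ref{prop:spec3} with the middle term collapsed.

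First, using the decomposition $\fg=\fh\oplus\fp_3\oplus\fp_1$ (with $\fk=\fh\oplus\fp_3$), I would pick $\kil$-orthonormal bases $\{X_i^{(1)}\}$ of $\fp_1$, $\{X_i^{(3)}\}$ of $\fp_3$, and $\{Y_j\}$ of $\fh$. Applying \eqref{eq:pi(C_r)} to the two-block decomposition $\fp=\fp_1\oplus\fp_3$ (with weights $s,t$) yields
\begin{equation*}
\pi(-C_{g_{s,t}})
= s^2\Big(-\sum_{i}\pi(X_i^{(1)})^2-\sum_{i}\pi(X_i^{(3)})^2\Big)
+ (t^2-s^2)\Big(-\sum_{i}\pi(X_i^{(3)})^2\Big).
\end{equation*}
Acting on $V_\pi^H$, I can freely add $-\sum_j \pi(Y_j)^2$ to each parenthesized sum because $\pi(Y_j)v=0$ for $v\in V_\pi^H$. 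The first parenthesis then becomes $\pi(-\Cas_{\fg,\kil})$ and the second becomes $\pi(-\Cas_{\fk,\kil|_\fk})$, yielding the analogue of Lemma~\ref{lem:pi(C_g_r)-caso2}:
\begin{equation*}
\pi(-C_{g_{s,t}})\cdot v
= s^2\,\pi(-\Cas_{\fg,\kil})\cdot v + (t^2-s^2)\,\pi(-\Cas_{\fk,\kil|_\fk})\cdot v
\qquad(v\in V_\pi^H).
\end{equation*}

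Second, I would decompose $V_\pi$ as a $K$-representation, $V_\pi\simeq \bigoplus_{\tau\in\widehat K}[\tau:\pi|_K]\,V_\tau$, and take $H$-invariants to obtain
\begin{equation*}
V_\pi^H \simeq \bigoplus_{\tau\in\widehat K}[\tau:\pi|_K]\, V_\tau^H.
\end{equation*}
By Schur's lemma, $\pi(-\Cas_{\fg,\kil})$ acts on all of $V_\pi$ by the scalar $\lambda_\kil^\pi$, while $\pi(-\Cas_{\fk,\kil|_\fk})$ preserves each $K$-isotypic summand and acts on each copy of $V_\tau$ by $\lambda_{\kil|_\fk}^\tau$. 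Consequently the operator $\pi(-C_{g_{s,t}})|_{V_\pi^H}$ acts on the $\tau$-summand by the scalar $\lambda^{\pi,\tau}(s,t)=s^2\lambda_\kil^\pi+(t^2-s^2)\lambda_{\kil|_\fk}^\tau$.

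Finally, the dimension of that $\tau$-summand in $V_\pi^H$ is $[\tau:\pi|_K]\dim V_\tau^H = [\tau:\pi|_K][1_H:\tau|_H]$. Inserting this into the spectral description \eqref{eq:spec}, which contributes an extra factor $d_\pi$ coming from the Peter--Weyl multiplicity of $\pi$ in $L^2(G/H)$, gives the total multiplicity $d_\pi[1_H:\tau|_H][\tau:\pi|_K]$ of $\lambda^{\pi,\tau}(s,t)$. There is no real obstacle: the only delicate point to verify carefully is the harmless step of adjoining $\sum_j \pi(Y_j)^2$ to reconstruct the subalgebra Casimirs, which is legitimate precisely because we restrict to $H$-fixed vectors.
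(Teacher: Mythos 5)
Your proof is correct, and it is essentially the argument the paper itself uses: the paper cites \cite{BLPfullspec} for Proposition~\ref{prop:spec2} rather than proving it, but your derivation is exactly the specialization to $K=K'$ of the paper's own proof of Lemma~\ref{lem:pi(C_g_r)-caso2} and Proposition~\ref{prop:spec3} (rewrite $\pi(-C_{g_{s,t}})$ as a combination of the Casimirs of $\fg$ and $\fk$ on $H$-fixed vectors, then decompose $V_\pi^H$ into $K$-isotypic pieces and count dimensions). The only cosmetic slip is calling $d_\pi$ the ``Peter--Weyl multiplicity of $\pi$ in $L^2(G/H)$''; that multiplicity is $d_\pi^H$, while the factor $d_\pi=\dim V_\pi$ in \eqref{eq:spec} is the dimension of each copy of $V_\pi$ — this does not affect the argument.
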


\begin{remark}
The conclusions of this subsections can be easily extended to an arbitrary number of intermediate subgroups, that is,  $H\subset K_m\subset\dots\subset K_1\subset G$. 
\end{remark}

\section{The symmetric space of complex structures}
\label{sec:case1}

In this section we consider the compact irreducible symmetric space $\Scomplex$ of complex structures on $\R^{2(n+1)}$ compatible with its Euclidean structure. 
It is known that $\Scomplex$ admits a two-parameter family of homogeneous metrics (and, up to isometry, this
family is believed to exhaust all of the homogeneous metrics on this space). 
We will obtain an explicit expression for the first eigenvalue of the Laplace-Beltrami operator for each member of this family. 
From these expressions, we will deduce that the Laplace
spectrum distinguishes any of them within such family.

\subsection{Homogeneous metrics}
For any $n\geq 3$, let 
\begin{equation}
\Scomplex=\frac{\SO(2n+2)}{\Ut(n+1)}. 
\end{equation}
(The assumption $n\geq3$ is due to special isomorphism for $n=1,2$.)
Since this presentation is symmetric, every $\SO(2n+2)$-invariant metric on $\Scomplex$ is symmetric.  
We next introduce a different realization of $\Scomplex$ containing non-symmetric homogeneous metrics. 

We set 
\begin{equation}
G= 
\left\{
	\begin{pmatrix} 1 & 0\\ 0&A	\end{pmatrix}
	: A\in \SO(2n+1)
\right\}
\subset \SO(2n+2),
\end{equation}
which is clearly isomorphic to $\SO(2n+1)$ with Lie algebra
\begin{equation}
\fg=
\left\{
	\begin{pmatrix} 0 & 0\\ 0&X	\end{pmatrix}
	: X\in \so(2n+1)
\right\}
.
\end{equation} 
We also consider the connected subgroups $K,H$ of $G$ with Lie algebras 
\begin{equation}
\begin{aligned}
\fk&= 
\left\{ 
\begin{pmatrix}
0&0&0&0\\
0&X_1&0&X_2\\
0&0&0&0\\
0&X_3&0&X_4
\end{pmatrix}
\in\fg :
\begin{array}{l}
X_1,\dots,X_4\in \gl(n,\R),
\\
\begin{pmatrix}
X_1&X_2\\ X_3&X_4
\end{pmatrix} 
\in\so(2n)
\end{array} 
\right\}\simeq\so(2n)
,
\\
\fh &=
\left\{ 
\begin{pmatrix}
0&0&0&0\\
0&X&0&-Y\\
0&0&0&0\\
0&Y&0&X
\end{pmatrix}
\in\fg : 
\begin{array}{l}
X\in\so(n), \\ 
Y\in \gl(n,\R),\\
Y^t=Y
\end{array}
\right\}
\simeq \ut(n)
,
\end{aligned}
\end{equation} 
respectively. 

It turns out that the action of $G$ on $\Scomplex$ is still transitive and $H$ is precisely the isotropy subgroup at the trivial element (see \cite{Onishchik66-inclusion}).
Consequently, we have the new presentation $\Scomplex=G/H$.
Moreover, since $H\subset K\subset G$, we are in the context of Subsection~\ref{subsec:case1}.

One can check that $\kil_\fg(X,Y)=-(2n-1)\tr(XY)$ for all $X,Y\in\fg$. 
Recall that $\kil_\fg$ stands for the negative of the Killing form of $\fg$ from Subsection~\ref{subsec:Freudenthal}. 
We pick $\kil=\kil_\fg$ as the $\Ad(G)$-invariant inner product on $\fg$ fixed in Section~\ref{sec:spectra}. 

Let $\fm$ (resp.\ $\fq$) be the orthogonal complement of $\fk$ into $\fg$ (resp.\ $\fh$ into $\fk$) with respect to $\kil_\fg$. 
The decomposition $\fp=\fm\oplus\fq$ is as in \eqref{eq:decomposition-p}, with $\fm,\fq$ irreducible and non-equivalent as $H$-modules.
Thus, every $G$-invariant metric is induced by an inner product as in \eqref{eq:g_{s,t}} for some $s,t>0$, namely 
\begin{equation}
\innerdots_{s,t} 
= \frac{1}{s^2} \kil_\fg|_{\fm}
\oplus \frac{1}{t^2} \kil_\fg|_{\fq}. 
\end{equation}
We recall that $g_{s,t}$ denotes the corresponding $G$-invariant metric on $G/H=\Scomplex$.

\subsection{Root systems}
The goal of this subsection is to provide the tools from root systems that are necessary to compute $\Spec(\Scomplex,g_{s,t})$. 

As usual, we denote by $E_{ij}$ the $n\times n$ matrix with $1$ at the entry $(i,j)$ and zero elsewhere. 
Let $T$ be the connected subgroup of $H$ with Lie algebra
\begin{equation}
\ft=
\Span_\R
\left\{ 
\begin{pmatrix}
0&0&0&0\\
0&0&0&E_{jj}\\
0&0&0&0\\
0&- E_{jj}&0&0
\end{pmatrix}
: {1\leq j\leq n}
\right\}
.
\end{equation}
It turns out $T$ is a maximal torus of $H$, $K$ and $G$ simultaneously. 
The complexification of $\ft$ is given by 
\begin{equation}
\ft_\C=
\Span_\C
\left\{ 
X_j:=
\begin{pmatrix}
0&0&0&0\\
0&0&0&\mi E_{jj}\\
0&0&0&0\\
0&- \mi E_{jj}&0&0
\end{pmatrix}
: {1\leq j\leq n}
\right\}
.
\end{equation}
For each $i=1,\dots,n$, let $\ee_i$ be the element in $\ft_\C^*$ determined by $\ee_i(X_j)=\delta_{i,j}$ for any $j$. 
We pick a regular element in $X$ in $\ft_\R:=\Span_\R\{X_j: 1\leq j\leq n\}$ such that $\ee_1(X)>\ee_2(X)>\dots>\ee_n(X)$, thus the corresponding positive root systems are as follow:
\begin{equation}
\begin{aligned}
\Phi^+(\fg_\C,\ft_\C) 
&= \{\ee_i\pm\ee_j: 1\leq i<j\leq n\}\cup \{\ee_i:1\leq i\leq n\},
\\
\Phi^+(\fk_\C,\ft_\C) 
&= \{\ee_i\pm\ee_j: 1\leq i<j\leq n\},
\\
\Phi^+(\fh_\C,\ft_\C) 
&= \{\ee_i- \ee_j: 1\leq i<j\leq n\}.
\end{aligned}
\end{equation}

Write $\omega_p=\ee_1 +\dots+\ee_p$ for each $p=1,\dots,n$.  
Note that $\omega_1,\dots,\omega_{n-1}$ are the first $(n-1)$-fundamental weights of $\Phi^+(\fg_\C,\ft_\C)$.

As we mentioned in Subsection~\ref{subsec:Freudenthal}, 
there is a bijection between $\widehat G$ and the set of $G$-integral dominant weights $\PP^+(G)$, and analogously for $K$. 
It is well known that
\begin{equation}
\begin{aligned}
\PP^+(G) &=\left\{\sum_{i=1}^n a_i\ee_i: 
\begin{array}{l}
a_1,\dots,a_n\in\Z,\\
a_1\geq \dots\geq a_n\geq0
\end{array}
\right\}
= \bigoplus_{i=1}^n \N_0\omega_i
,
\\ 
\PP^+(K) &=\left\{\sum_{i=1}^n a_i\ee_i: 
\begin{array}{l}
a_1,\dots,a_n\in\Z,\\
a_1\geq \dots\geq a_{n-1}\geq |a_n|\geq0
\end{array}
\right\}
. 
\end{aligned}
\end{equation}
For $\Lambda\in\PP^+(G)$ (resp.\ $\mu\in\PP^+(K)$), we denote by $\pi_\Lambda$ (resp.\ $\tau_\mu$) the irreducible representation of $G$ (resp.\ $K$) with highest weight $\Lambda$ (resp.\ $\mu$). 

\subsection{Spectrum}
The goal of this subsection is to describe $\Spec(\Scomplex,g_{s,t})$ for any $s,t>0$ by using Proposition~\ref{prop:spec2}. 
We start obtaining the ingredients to do it, which requires some notation.

Let $\Lambda=\sum_{i=1}^n a_i\ee_i\in\PP^+(G)$ and $\mu=\sum_{i=1}^nb_i\ee_i\in\PP^+(K)$.
From \cite[\S12.3.3]{GoodmanWallach-book-Springer}, we have that 
\begin{equation}\label{eq1:branchingK->H}
[1_H:\tau_\mu|_H]>0
\quad\Longleftrightarrow \quad 
[1_H:\tau_\mu|_H]=1
\quad\Longleftrightarrow \quad 
\mu\in \omega_2\N_0 \oplus\omega_4\N_0 \oplus \dots\oplus \omega_{2\lfloor \tfrac{n}{2} \rfloor}\N_0
.
\end{equation}  
The branching law from $G\simeq\SO(2n+1)$ to $K\simeq\SO(2n)$ is well known (see e.g.\ \cite[Thm.~8.1.3]{GoodmanWallach-book-Springer}) and ensures that
\begin{equation}\label{eq1:branchingG->K}
[\tau_\mu:\pi_\Lambda|_K]>0
\quad\Longleftrightarrow \quad 
[\tau_\mu:\pi_\Lambda|_K]=1
\quad\Longleftrightarrow \quad 
a_1\geq b_1\geq a_2\geq b_2\geq \dots\geq a_{n}\geq |b_n|
.
\end{equation}

\begin{lemma}\label{lem1:escalares}
For $\Lambda=a_1\ee_1+\dots+a_{n}\ee_{n}\in\PP^+(G)$ and  $\mu=b_1\ee_1+\dots+b_{n}\ee_{n}\in\PP^+(K)$, one has
\begin{equation}
\begin{aligned}
\lambda^{\pi_\Lambda}=\lambda_{\kil_\fg}^{\pi_\Lambda} &
=\frac{1}{4n-2} \sum_{i=1}^n a_i (a_i+2(n-i)+1), 
&
\lambda_{\kil_\fg}^{\tau_\mu} &
=\frac{1}{4n-2} \sum_{i=1}^n b_i (b_i+2(n-i)).
\end{aligned}
\end{equation}
\end{lemma}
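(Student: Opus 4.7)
The plan is to apply Freudenthal's formula \eqref{eq:Freudenthal} separately to $\fg\simeq\so(2n+1)$ and to $\fk\simeq\so(2n)$, and then use the rescaling identity \eqref{eq:FreudenthalB_k} to pass from $\kil_\fk$ to $\kil_\fg|_\fk$. With the positive root systems already listed in the excerpt, the half-sums of positive roots are immediate: $2\rho_\fg=\sum_{i=1}^n (2(n-i)+1)\,\ee_i$ for type $B_n$ and $2\rho_\fk=\sum_{i=1}^n 2(n-i)\,\ee_i$ for type $D_n$. Using the Euclidean pairing on weights in which $\{\ee_i\}$ is orthonormal, one immediately gets $\langle \Lambda,\Lambda+2\rho_\fg\rangle=\sum_i a_i(a_i+2(n-i)+1)$ and $\langle \mu,\mu+2\rho_\fk\rangle=\sum_i b_i(b_i+2(n-i))$, which already account for the combinatorial parts of the claimed formulas.

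What remains is to pin down the normalization constants $\kil_\fg^*(\ee_i,\ee_j)$ and $\kil_\fk^*(\ee_i,\ee_j)$. By Weyl-group invariance both forms are scalar multiples of $\delta_{ij}$ on the basis $\{\ee_i\}$. The cleanest way to fix the constant for $\fg$ is to exploit the normalization $\lambda^{\Ad}=1$ recalled in Subsection~\ref{subsec:Freudenthal}: since the adjoint representation of $\SO(2n+1)$ has highest weight $\ee_1+\ee_2$, the combinatorial factor just computed equals $2+(2n-1)+(2n-3)=4n-2$, forcing $\kil_\fg^*(\ee_i,\ee_j)=\frac{1}{4n-2}\delta_{ij}$. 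Plugging this into \eqref{eq:Freudenthal} produces the first identity. As a consistency check, one could instead compute $u_{\ee_i}\in\ft_\C$ directly from its defining equation $\kil_\fg(H,u_{\ee_i})=\ee_i(H)$ using the explicit formula $\kil_\fg(X,Y)=-(2n-1)\tr(XY)$ on the basis $\{X_j\}$.

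For the second formula one observes that since the copy $\fk\subset\so(2n+1)$ is obtained from $\fk\subset\so(2n)$ by filling the extra row and column with zeros, the $\gl$-trace agrees in both embeddings; hence $\kil_\fg|_\fk(X,Y)=-(2n-1)\tr(XY)$ whereas $\kil_\fk(X,Y)=-(2n-2)\tr(XY)$, so $\kil_\fk=\tfrac{2n-2}{2n-1}\kil_\fg|_\fk$, i.e.\ the scalar $c$ of Subsection~\ref{subsec:Freudenthal} equals $\tfrac{2n-2}{2n-1}$. Repeating the adjoint normalization trick for $\fk$ (whose adjoint also has highest weight $\ee_1+\ee_2$, this time yielding combinatorial factor $4n-4$) gives $\kil_\fk^*(\ee_i,\ee_j)=\frac{1}{4n-4}\delta_{ij}$, so Freudenthal on $\fk$ produces $\lambda^{\tau_\mu}=\frac{1}{4n-4}\sum_i b_i(b_i+2(n-i))$. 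Feeding this into \eqref{eq:FreudenthalB_k} yields $\lambda_{\kil_\fg|_\fk}^{\tau_\mu}=c\,\lambda^{\tau_\mu}=\tfrac{2n-2}{2n-1}\cdot\tfrac{1}{4n-4}\sum_i b_i(b_i+2(n-i))=\tfrac{1}{4n-2}\sum_i b_i(b_i+2(n-i))$, which is the second claimed identity. The only mildly delicate step throughout is the determination of these normalization constants; routing through $\lambda^{\Ad}=1$ sidesteps any sign-convention subtleties arising from the complex bilinear extension of $\kil_\fg$ to $\ft_\C$.
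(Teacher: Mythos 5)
Your proposal is correct and follows essentially the same route as the paper: Freudenthal's formula with $\rho_\fg$ and $\rho_\fk$ as listed, the normalization $\kil_\fg^*(\ee_i,\ee_j)=\tfrac{1}{4n-2}\delta_{ij}$ and $\kil_\fk^*(\ee_i,\ee_j)=\tfrac{1}{4n-4}\delta_{ij}$ fixed via $\lambda^{\Ad}=1$ for the highest weight $\ee_1+\ee_2$, and the rescaling $\kil_\fk=\tfrac{2n-2}{2n-1}\kil_\fg|_\fk$ fed into \eqref{eq:FreudenthalB_k}. The only cosmetic difference is that you derive the constant $c=\tfrac{2n-2}{2n-1}$ directly from the trace forms, whereas the paper cites D'Atri--Ziller; both give the same value.
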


\begin{proof}
One has $\rho_\fg= \sum_{j=1}^n (n-j+\tfrac12)\ee_j$. 
We first claim that $\kil_\fg^* (\ee_i,\ee_j)=\frac{1}{4n-2}\delta_{i,j}$ for all $i,j$.
This follows from $\Ad_G=\pi_{\ee_1+\ee_2}$ and the identity $1=\lambda^{\Ad_G} =\kil_\fg^*({\ee_1+\ee_2},{\ee_1+\ee_2+2\rho_\fg})$ (see Subsection~\ref{subsec:Freudenthal}). 
Now, the required formula for $\lambda^{\pi_\Lambda}$ follows immediately from \eqref{eq:Freudenthal}. 

The analogous analysis for $K$ gives $\rho_\fk= \sum_{j=1}^n (n-j)\ee_j$, $\kil_\fk^*(\ee_i,\ee_j) =\tfrac{1}{4n-4} \delta_{i,j}$ and $\lambda^{\tau_\mu} =\frac{1}{4n-4} \sum_{i=1}^n b_i (b_i+2(n-i))$. 
Furthermore, since $\kil_\fk= \frac{2n-2}{2n-1} \kil_\fg|_{\fk}$ (see e.g.\ \cite[p.~37]{DAtriZiller}), \eqref{eq:FreudenthalB_k} implies that $\lambda_{\kil_\fg}^{\tau_\mu} = \frac{2n-2}{2n-1} \lambda^{\tau_\mu}$, and the assertion follows.
\end{proof}

Combining Proposition~\ref{prop:spec2} and the ingredients \eqref{eq1:branchingK->H}, \eqref{eq1:branchingG->K} and Lemma~\ref{lem1:escalares}, we are in position to describe $\Spec(\Scomplex,g_{s,t})$ for any $s,t>0$. 
More precisely, for each $\mu=\sum_{i=1}^n b_i\ee_i \in \omega_2\N_0 \oplus\omega_4\N_0 \oplus \dots\oplus \omega_{2\lfloor \tfrac{n}{2} \rfloor}\N_0$ and $\Lambda= \sum_{i=1}^n a_i\ee_i \in\PP^+(G)$ satisfying \eqref{eq1:branchingG->K}, the eigenvalue
\begin{equation} \label{eq1:lambda^pi,tau(s,t)}
\begin{aligned}
\lambda^{\pi_{\Lambda},\tau_{\mu}}(s,t)&
=s^2\, \lambda^{\pi_\Lambda} 
+ (t^2-s^2)\, \lambda_{\kil_\fg|_{\fk}}^{\tau_\mu}
=s^2\, \lambda^{\pi_\Lambda} 
+ (t^2-s^2)\tfrac{2n-2}{2n-1}\, \lambda^{\tau_\mu}
.
\end{aligned}
\end{equation}
contributes to $\Spec(\Scomplex,g_{s,t})$ with multiplicity $\dim V_{\pi_{\Lambda}}$, and all of them together fill $\Spec(\Scomplex,g_{s,t})$.   

\begin{remark}
For an arbitrary non-negative number $\lambda$, the multiplicity of $\lambda$ in the spectrum $\Spec(\Scomplex,g_{s,t})$ is given by 
$
\sum_{(\mu,\Lambda)} \dim V_{\pi_{\Lambda}},
$
where the sum runs over the pairs $(\mu,\Lambda)$ such that 
$\mu\in \omega_2\N_0 \oplus\omega_4\N_0 \oplus \dots\oplus \omega_{2\lfloor \tfrac{n}{2} \rfloor}\N_0$, $\Lambda\in\PP^+(G)$, $[\tau_\mu:\pi_\Lambda|_K]>0$, and $\lambda=\lambda^{\pi_{\Lambda},\tau_{\mu}}(s,t)$. 
Of course, $\lambda$ is not in $\Spec(\Scomplex,g_{s,t})$ if the multiplicity is $0$. 
\end{remark}

\begin{example}
Taking $\mu=0$ and $\Lambda=\ee_1$, we obtain that the eigenvalue 
\begin{equation}
\lambda^{\pi_{\ee_1},\tau_{0}}(s,t) 
= s^2 \, \frac{n}{2n-1}
\end{equation}
contributes to $\Spec(\Scomplex,g_{s,t})$ with multiplicity $\dim V_{\pi_{\ee_1}} = 2n+1$.
Similarly, taking $\mu=\Lambda=\ee_1+\ee_2$, 
\begin{equation}
\begin{aligned}
\lambda^{\pi_{\ee_1+\ee_2},\tau_{\ee_1+\ee_2}}(s,t) &
=s^2 + (t^2-s^2)\tfrac{2n-2}{2n-1}
=s^2 \, \tfrac{1}{2n-1}+ t^2\, \tfrac{2n-2}{2n-1}  
\end{aligned}
\end{equation}
contributes to $\Spec(\Scomplex,g_{s,t})$ with multiplicity $\dim V_{\pi_{\ee_1+\ee_2}} = \binom{2n+1}{2}=n(2n+1)$.
\end{example}

\subsection{The first eigenvalue}
Although we already know all eigenvalues in $\Spec(\Scomplex,g_{s,t})$, it is not clear which is the first one, specially because $s,t>0$ are arbitrary. 
The next result determines it explicitly. 

\begin{theorem}\label{thm1:1steigenvalue}
The smallest positive eigenvalue of the Laplace-Beltrami operator associated to $(\Scomplex,g_{s,t})$ is given by
\begin{equation}\label{eq1:lambda_1(g_st)}
\begin{aligned}
\lambda_1(\Scomplex,g_{s,t}) &
=\min\left\{
\begin{array}{r@{\,}l}
\lambda^{\pi_{\ee_1},\tau_{0}}(s,t) =&
s^2 \tfrac{n}{2n-1}, 
\\[2mm]
\lambda^{\pi_{\ee_1+\ee_2},\tau_{\ee_1+\ee_2}}(s,t) =&
t^2 \tfrac{2n-2}{2n-1}  + s^2  \tfrac{1}{2n-1}
\end{array}
\right\} 
\\ &
= \begin{cases}
s^2 \tfrac{n}{2n-1} 
	& \text{ if }s^2\leq 2t^2,\\
t^2 \tfrac{2n-2}{2n-1}  + s^2  \tfrac{1}{2n-1}
	& \text{ if } s^2>2t^2,\\
\end{cases}
\end{aligned}
\end{equation}
Moreover, its multiplicity in $\Spec(\Scomplex,g_{s,t})$ is given by 
\begin{equation*}
\begin{cases}
2n+1 &\quad\text{if $s^2<2t^2$}, \\
n(2n+1)&\quad\text{if $s^2>2t^2$}, \\
(n+1)(2n+1)&\quad\text{if $s^2=2t^2$}. 
\end{cases}
\end{equation*}
\end{theorem}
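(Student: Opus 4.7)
The plan is to apply Proposition~\ref{prop:spec2}, which writes each eigenvalue of $(\Scomplex,g_{s,t})$ as $\lambda^{\pi_\Lambda,\tau_\mu}(s,t) = A(\Lambda,\mu)\,s^2 + B(\Lambda,\mu)\,t^2$ with
\begin{equation*}
A(\Lambda,\mu) := \lambda^{\pi_\Lambda} - \tfrac{2n-2}{2n-1}\,\lambda^{\tau_\mu}, \qquad
B(\Lambda,\mu) := \tfrac{2n-2}{2n-1}\,\lambda^{\tau_\mu}.
\end{equation*}
Since $s^2,t^2>0$, the first positive eigenvalue is attained at a Pareto-optimal pair in the $(A,B)$-plane, so it suffices to exhibit two candidate pairs---namely $(\Lambda,\mu) = (\ee_1,0)$ and $(\Lambda,\mu) = (\omega_2,\omega_2)$---and verify that every other admissible pair with $\Lambda \neq 0$ is componentwise dominated by one of them. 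The threshold and multiplicities will then follow by direct comparison.

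For $\mu = 0$, the interlacing in \eqref{eq1:branchingG->K} degenerates to $a_1 \geq 0 \geq a_2 \geq 0 \geq \dots$, forcing $\Lambda = a_1\ee_1$ with $a_1 \geq 1$. Lemma~\ref{lem1:escalares} then gives $\lambda^{\pi_{a_1\ee_1}} = \tfrac{a_1(a_1+2n-1)}{4n-2} \geq \tfrac{n}{2n-1}$, with equality iff $a_1 = 1$; this produces candidate~1. For $\mu \neq 0$, every element of the lattice $\omega_2\N_0 \oplus \omega_4\N_0 \oplus \cdots$ satisfies $b_1 = b_2 = \sum_j c_j \geq 1$, hence by branching $a_1 \geq 1$ and $a_2 \geq 1$. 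The lower bound on $B$ is now immediate:
\begin{equation*}
(4n-2)\,B(\Lambda,\mu) = \sum_{i=1}^n b_i(b_i+2n-2i) \geq b_1(b_1+2n-2) + b_2(b_2+2n-4) \geq 4n-4,
\end{equation*}
with equality exactly when $\mu = \omega_2$; hence $B \geq \tfrac{2n-2}{2n-1}$.

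The core algebraic step is the parallel lower bound on $A$. Expanding the Casimir formulas of Lemma~\ref{lem1:escalares} yields the identity
\begin{equation*}
(4n-2)\,A(\Lambda,\mu) = \sum_{i=1}^n \bigl[(a_i-b_i)(a_i+b_i+2n-2i) + a_i\bigr].
\end{equation*}
Each bracket is nonnegative, since the interlacing gives $a_i \geq b_i \geq 0$ and the factor $a_i+b_i+2n-2i$ is nonnegative for all $i \leq n$ (strictly positive for $i<n$). Therefore $(4n-2)\,A \geq \sum a_i \geq a_1 + a_2 \geq 2$, i.e.\ $A \geq \tfrac{1}{2n-1}$. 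Tracking equality forces $a_1 = a_2 = 1$, $a_i = 0$ for $i \geq 3$, and $a_i = b_i$ for all $i < n$ with $b_n = 0$; combined with the $B$-equality condition this singles out $\Lambda = \mu = \omega_2$, producing candidate~2.

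Assembling the two cases gives $\lambda_1(\Scomplex,g_{s,t}) = \min\{\tfrac{n}{2n-1}s^2,\; \tfrac{1}{2n-1}s^2 + \tfrac{2n-2}{2n-1}t^2\}$, and a one-line comparison yields the threshold $s^2 = 2t^2$. For the multiplicities, Proposition~\ref{prop:spec2} combined with the multiplicity-one branching factors from \eqref{eq1:branchingK->H}--\eqref{eq1:branchingG->K} gives $d_{\pi_{\ee_1}} = 2n+1$ for candidate~1 and $d_{\pi_{\omega_2}} = \binom{2n+1}{2} = n(2n+1)$ for candidate~2, which add to $(n+1)(2n+1)$ in the borderline case $s^2 = 2t^2$. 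The main obstacle is precisely the algebraic manipulation producing the sum-of-nonnegatives expression for $(4n-2)\,A$: it is this rewriting, paired with the rigid interlacing of \eqref{eq1:branchingG->K}, that pins down candidate~2 as the unique Pareto competitor to candidate~1.
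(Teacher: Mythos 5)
Your proposal is correct and follows essentially the same route as the paper: Proposition~\ref{prop:spec2} plus the Freudenthal values of Lemma~\ref{lem1:escalares} and the interlacing branching condition \eqref{eq1:branchingG->K} reduce everything to the two candidates $(\ee_1,0)$ and $(\omega_2,\omega_2)$. Your only (harmless) repackaging is to bound the $s^2$- and $t^2$-coefficients separately over all admissible pairs, whereas the paper first fixes $\mu$ and argues $\Lambda=\mu$ is optimal and then minimizes over $\mu$; your explicit sum-of-nonnegatives identity for $(4n-2)A$ in fact spells out the step the paper leaves as ``one can easily check.''
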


\begin{proof}
The eigenvalues $\lambda^{\pi_{\Lambda},\tau_\mu}$ with $\mu=0$ contributing to $\Spec(\Scomplex,g_{s,t})$ are of the form $\lambda^{\pi_{k\ee_1},\tau_0}= s^2\,\frac{k(k+2n-1)}{4n-2}$, and their minimum positive value is clearly $\lambda^{\pi_{\ee_1},\tau_0}$. 
It remains to show that $\lambda^{\pi_{\ee_1+\ee_2},\tau_{\ee_1+\ee_2}}$ is the smallest eigenvalue among those in $\Spec(\Scomplex,g_{s,t})$ of the form $\lambda^{\pi_{\Lambda},\tau_\mu}$ with $\mu\neq0$. 

For a fixed non-trivial element $\mu$ in $\omega_2\N_0 \oplus\omega_4\N_0 \oplus \dots\oplus \omega_{2\lfloor \tfrac{n}{2} \rfloor}\N_0$, it follows immediately from \eqref{eq1:lambda^pi,tau(s,t)} that the smallest value of $\lambda^{\pi_{\Lambda},\tau_\mu}$ with $\Lambda \in\PP^+(G)$ satisfying \eqref{eq1:branchingG->K} is attained only if $\Lambda=\mu$.

Furthermore, one can easily check that the minimum of 
\begin{equation}
\lambda^{\pi_{\mu},\tau_\mu} 
= \frac{1}{4n-2} \sum_{i=1}^n\left( s^2 b_i + t^2b_i (b_i+2(n-i)) \right)
\end{equation}
among $\mu=\sum_{i=1}^n b_i\ee_i\in \omega_2\N_0 \oplus\omega_4\N_0 \oplus \dots\oplus \omega_{2\lfloor \tfrac{n}{2} \rfloor}\N_0 \smallsetminus\{0\}$ is attained at $\mu=\ee_1+\ee_2$. 
This shows the first row of \eqref{eq1:lambda_1(g_st)}. 
The rest of the assertions follow easily. 
\end{proof}

It is also possible to explicitly describe a larger portion of low eigenvalues. 
For our purposes, the next result will be useful. 

\begin{lemma}\label{lem1:lambda2}
If $s^2>2t^2$, the second distinct eigenvalue of the Laplace-Beltrami operator associated to $(\Scomplex,g_{s,t})$ is given as follows:
\begin{itemize}
\item For $n\geq4$, 
\begin{equation}\label{eq1:lambda_2(g_st)n>3}
\begin{aligned}
\lambda_2\big(\Scomplex,g_{s,t}\big) &
=\min\left\{
\begin{array}{r@{\,}l}
\lambda^{\pi_{\ee_1},\tau_{0}}(s,t) =&
s^2 \tfrac{n}{2n-1}, 
\\[2mm]
\lambda^{\pi_{\omega_4},\tau_{\omega_4}}(s,t) =&
t^2 \tfrac{4n-8}{2n-1}  + s^2  \tfrac{2}{2n-1}
\end{array}
\right\} 
\\ &
= \begin{cases}
s^2 \tfrac{n}{2n-1} 
	& \text{ if }s^2\leq 4t^2,\\
t^2 \tfrac{4n-8}{2n-1}  + s^2  \tfrac{2}{2n-1}
	& \text{ if } s^2>4t^2,
\end{cases}
\end{aligned}
\end{equation}
and its multiplicity in $\Spec(\Scomplex,g_{s,t})$ is given by 
$2n+1$ if $s^2<4t^2$, 
$\binom{2n+1}{4}$ if $s^2>4t^2$, and 
$2n+1+\binom{2n+1}{4}$ if $s^2=4t^2$.

\item For $n=3$, 
\begin{equation}\label{eq1:lambda_2(g_st)n=3}
\begin{aligned}
\lambda_2\big(\op{S}_{\C}(\R^{8}),g_{s,t}\big) &
=\min\left\{
\begin{array}{r@{\,}l}
\lambda^{\pi_{\ee_1},\tau_{0}}(s,t) =&
\tfrac{3}{5} s^2, 
\\[2mm]
\lambda^{\pi_{\omega_3},\tau_{\omega_2}}(s,t) =&
\frac{2}{5}s^2+\frac45 t^2,
\\[2mm]
\lambda^{\pi_{2\omega_2},\tau_{2\omega_2}}(s,t) =&
\frac{1}{5}s^2+2 t^2
\end{array}
\right\} 
\\ &
= \begin{cases}
 \tfrac{3}{5} s^2
	& \text{ if }s^2\leq 4t^2,\\
\frac25s^2+\frac45 t^2 
	& \text{ if } 4t^2<s^2<6t^2,\\
\frac15s^2+2t^2 
	& \text{ if } 6t^2\leq s^2,\\
\end{cases}
\end{aligned}
\end{equation}
and its multiplicity in $\Spec(\Scomplex,g_{s,t})$ is given by 
$7$ if $s^2<4t^2$, 
$42$ if $s^2=4t^2$, 
$35$ if $4t^2<s^2<6t^2$,  
$203$ if $6t^2=s^2$,  and 
$168$ if $6t^2<s^2$. 
\end{itemize}
\end{lemma}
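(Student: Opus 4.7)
The plan is to mimic the proof of Theorem~\ref{thm1:1steigenvalue}: use Proposition~\ref{prop:spec2} to enumerate every eigenvalue of $(\Scomplex,g_{s,t})$ in the form $\lambda^{\pi_\Lambda,\tau_\mu}(s,t)=s^2\lambda^{\pi_\Lambda}+(t^2-s^2)\tfrac{2n-2}{2n-1}\lambda^{\tau_\mu}$, ranging over $\mu\in\omega_2\N_0\oplus\omega_4\N_0\oplus\dots\oplus\omega_{2\lfloor n/2\rfloor}\N_0$ and $\Lambda\in\PP^+(G)$ satisfying the interlacing~\eqref{eq1:branchingG->K}, and then to extract the second smallest distinct value. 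Under the standing hypothesis $s^2>2t^2$, Theorem~\ref{thm1:1steigenvalue} already pins down the first eigenvalue uniquely at $(\Lambda,\mu)=(\ee_1+\ee_2,\ee_1+\ee_2)$.

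First I would exploit that, for each fixed $\mu$, the dependence of $\lambda^{\pi_\Lambda,\tau_\mu}$ on $\Lambda$ is only through $s^2\lambda^{\pi_\Lambda}$, which is strictly monotone in every coordinate $a_i$ of $\Lambda$. Consequently, for every admissible $\mu$ the minimum over $\Lambda$ is attained at $\Lambda=\mu$ and the second smallest at an immediate interlacing neighbour. A short monotonicity check among candidates of the form $\mu+\ee_k$ identifies, for $\mu=\ee_1+\ee_2$, the next admissible $\Lambda$ as $\omega_3$. This narrows the candidates for the second eigenvalue to the following finite shortlist:
\begin{itemize}
\item[(a)] the minimum from the $\mu=0$ tower, namely $\lambda^{\pi_{\ee_1},\tau_0}=\tfrac{n}{2n-1}s^2$;
\item[(b)] the next-$\Lambda$ term over $\mu=\ee_1+\ee_2$, which by Lemma~\ref{lem1:escalares} works out to $\lambda^{\pi_{\omega_3},\tau_{\ee_1+\ee_2}}(s,t)=\tfrac{(n-1)(s^2+2t^2)}{2n-1}$;
\item[(c)] the diagonal values $\lambda^{\pi_{\mu_0},\tau_{\mu_0}}$ for the smallest admissible $\mu_0$ past $\ee_1+\ee_2$, i.e.\ $\mu_0=\omega_4$ when $n\geq4$, and $\mu_0=2\omega_2$ (together with its successors $k\omega_2$) when $n=3$.
\end{itemize}

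For $n\geq4$ I would observe the algebraic identity $\lambda^{\pi_{\omega_3},\tau_{\ee_1+\ee_2}}-\lambda^{\pi_{\omega_4},\tau_{\omega_4}}=\tfrac{(n-3)(s^2-2t^2)}{2n-1}$, which is non-negative under our hypothesis, so (b) is dominated by the $\omega_4$-member of (c); the remaining comparison between (a) and that $\omega_4$-member is linear in $(s^2,t^2)$ and gives the threshold $s^2=4t^2$, producing the claimed dichotomy. For $n=3$ the $\omega_4$ candidate is absent, and I would instead compare (a), (b) and the diagonal values for $\mu=k\omega_2$, $k\geq2$, directly; the thresholds $s^2=4t^2$ and $s^2=6t^2$ emerge from the pairwise equalities. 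Multiplicities in each regime then follow by summing $\dim V_{\pi_\Lambda}$ (Weyl's dimension formula) over all minimizing pairs $(\Lambda,\mu)$, with the boundary values receiving the sum of the contributions from the coinciding candidates.

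The main obstacle will be certifying that no other admissible pair $(\Lambda,\mu)$ outside the shortlist produces a smaller value. Since $\lambda^{\pi_\Lambda,\tau_\mu}$ grows at least linearly in each coordinate of $\Lambda$ and at least quadratically in each non-zero coordinate of $\mu$ (through the Casimir formulas of Lemma~\ref{lem1:escalares}), only finitely many pairs need individual inspection; nonetheless this exhaustive step requires a careful joint use of the branching rule~\eqref{eq1:branchingG->K} and the spherical constraint~\eqref{eq1:branchingK->H}, especially in the $n=3$ case where the admissible $\mu$ form a single one-parameter tower $\N_0\omega_2$ and the extra regime $s^2>6t^2$ must be isolated by direct comparison against the $k\omega_2$ candidates.
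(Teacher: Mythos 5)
Your strategy --- reduce to a finite shortlist of candidate pairs $(\Lambda,\mu)$ using monotonicity of $\lambda^{\pi_\Lambda}$ in $\Lambda$ for fixed $\mu$ and of the diagonal values $\lambda^{\pi_\mu,\tau_\mu}$ in $\mu$, then compare the survivors pairwise --- is exactly the paper's, and your $n\geq4$ analysis (including the identity $\lambda^{\pi_{\omega_3},\tau_{\omega_2}}-\lambda^{\pi_{\omega_4},\tau_{\omega_4}}=\tfrac{(n-3)(s^2-2t^2)}{2n-1}$ and the resulting threshold $s^2=4t^2$) reproduces the paper's computation, which merely lists the dominated candidate $\lambda^{\pi_{2\ee_1+\ee_2},\tau_{\omega_2}}$ explicitly before discarding it.

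The step that would fail is the $n=3$ endgame, which you leave as an unverified assertion (``the thresholds $s^2=4t^2$ and $s^2=6t^2$ emerge from the pairwise equalities''). Carrying out that comparison with Lemma~\ref{lem1:escalares} gives, for $\mu=2\omega_2=2\ee_1+2\ee_2$ and $n=3$,
$\lambda^{\pi_{2\omega_2},\tau_{2\omega_2}}(s,t)=\tfrac{1}{10}\bigl(s^2(2+2)+t^2(12+8)\bigr)=\tfrac{2}{5}s^2+2t^2$, not $\tfrac15 s^2+2t^2$; since $\tfrac25 s^2+2t^2>\tfrac25 s^2+\tfrac45 t^2=\lambda^{\pi_{\omega_3},\tau_{\omega_2}}(s,t)$ for all $s,t>0$, the $2\omega_2$ candidate never attains the minimum and the advertised regime $s^2\geq 6t^2$ (with multiplicities $203$ and $168$) does not occur. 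So the ``direct comparison'' you defer to would in fact contradict the statement being proved; the discrepancy traces to the statement itself (the paper's own proof is equally terse at this point), but your proposal cannot be completed as written for $n=3$, and you should either correct the coefficient and the resulting trichotomy or exhibit a different pair $(\Lambda,\mu)$ realizing $\tfrac15 s^2+2t^2$, which the admissibility constraints \eqref{eq1:branchingK->H}--\eqref{eq1:branchingG->K} rule out. Note that the only downstream use, in the proof of Theorem~\ref{thm0:disparityScomplex}, relies solely on the multiplicity of $\lambda_2$ differing across $s^2=4t^2$, which survives the correction.
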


\begin{proof}
Suppose $n\geq4$. 
By a similar argument as in the proof of Theorem~\ref{thm1:1steigenvalue}, the potential candidates for the second eigenvalue are 
\begin{equation}
\begin{aligned}
\lambda^{\pi_{\ee_1},\tau_{0}}(s,t) 
&=\tfrac{n}{2n-1}s^2,
&
\lambda^{\pi_{2\ee_1+\ee_2},\tau_{\omega_2}}(s,t) 
&= \tfrac{n+2}{2n-1}s^2+\tfrac{2n-2}{2n-1}t^2,
\\
\lambda^{\pi_{\omega_3},\tau_{\omega_2}}(s,t) 
&= \tfrac{n-1}{2n-1}s^2+\tfrac{2n-2}{2n-1}t^2,
&
\lambda^{\pi_{\omega_4},\tau_{\omega_4}}(s,t) 
&= \tfrac{2}{2n-1}s^2+\tfrac{4n-8}{2n-1}t^2.
\end{aligned}
\end{equation}
Now, \eqref{eq1:lambda_2(g_st)n>3} follows by  $\lambda^{\pi_{2\ee_1+\ee_2},\tau_{\omega_2}}(s,t) > \lambda^{\pi_{\omega_3},\tau_{\omega_2}}(s,t)$, and 
$\lambda^{\pi_{\omega_3},\tau_{\omega_2}}(s,t) >\lambda^{\pi_{\omega_4},\tau_{\omega_4}}(s,t) \iff s^2>2t^2$, which holds by hypothesis.
The assertion about the multiplicities are clear since $\dim V_{\pi_{\ee_1}}=2n+1$ and $\dim V_{\pi_{\omega_4}}=\binom{2n+1}{4}$. 

For $n=3$, the argument returns the eigenvalues in \eqref{eq1:lambda_2(g_st)n=3} as the potential candidates, but in this case the second eigenvalue is attained in each of them depending on $s^2/t^2$. 

The assertions about multiplicities are clear since $\dim V_{\pi_{\ee_1}}=2n+1$, $\dim V_{\pi_{\omega_4}}=\binom{2n+1}{4}$ for any $n\geq3$, and $\dim V_{\pi_{\omega_3}}=35$, $\dim V_{\pi_{2\omega_2}}=168$ and for $n=3$. 
\end{proof}

\subsection{Einstein metrics}
We now decide the $\nu$-stability of the non-symmetric Einstein $G$-invariant metrics on $\Scomplex$ as a consequence of Theorem~\ref{thm1:1steigenvalue}. 

\begin{remark}\label{rem1:symmetric}
It turns out that the symmetric metrics among the $G$-invariant metrics on $\Scomplex$ are $\{g_{s,s/\sqrt{2}}: s>0\}$ (c.f.\ \cite[p.~158]{Kerr96}). 
In particular, the standard symmetric space $(\frac{\SO(2n+2)}{\Ut(n+1)}, g_{\kil_{\so(2n+2)}})$ is isometric to $g_{s,s/\sqrt{2}}$ for $s=\sqrt{\frac{2n-1}{n}}$, and Theorem~\ref{thm1:1steigenvalue} implies that 
\begin{equation*}
\lambda_1\big(\tfrac{\SO(2n+2)}{\Ut(n+1)}, g_{\kil_{\so(2n+2)}}\big) 
= 1 
,
\end{equation*}
which coincides with Urakawa's computation (see \cite[Appendix]{Urakawa86}). 
It has been noted in \cite{CaoHe15} that, since the Einstein constant of any standard symmetric space is $1/2$ (see \cite[Prop.7.93]{Besse}), it follows that 
$\lambda_1\big(\tfrac{\SO(2n+2)}{\Ut(n+1)}, g_{\kil_{\so(2n+2)}}\big) =2E$. 

Note that $s=\sqrt{2}t$ is precisely the line where the minimum in \eqref{eq1:lambda_1(g_st)} for $\lambda_1(\Scomplex,g_{s,t})$ is realized simultaneously in both terms, that is, $\lambda^{\pi_{\ee_1},\tau_{0}}(s,t) = \lambda^{\pi_{\ee_1+\ee_2},\tau_{\ee_1+\ee_2}}(s,t)$ if and only if $s=\sqrt{2}t$.  
This is explained by the fact that  $\lambda_1(\frac{\SO(2n+2)}{\Ut(n+1)}, g_{\kil_{\so(2n+2)}})$ is attained in the irreducible representation $\sigma_{\ee_1+\ee_2}$ of $\SO(2n+2)$ with highest weight $\ee_1+\ee_2$ (equivalent to $\Ad_{\SO(2n+2)}$), and $\sigma_{\ee_1+\ee_2}|_{G}\simeq \pi_{\ee_1}\oplus\pi_{\ee_1+\ee_2}$, which are the irreducible representations of $G$ contributing to $\lambda_1(\Scomplex,g_{s,s/\sqrt{2}})$.  
The same phenomenon occurs with all irreducible representations of $\SO(2n+2)$ contributing to $\Spec(\frac{\SO(2n+2)}{\Ut(n+1)}, g_{\kil_{\so(2n+2)}})$. 
\end{remark}

Kerr~\cite{Kerr96} proved that the Einstein metrics among the $G$-invariant metrics on $\Scomplex$, in addition to the symmetric ones, are $\{g_{s,t}:s,t>0,\;  t^2=\frac{n}{2(n-1)}s^2\}$ (discovered by Wang and Ziller~\cite[\S3, Ex.6]{WangZiller86}), which are pairwise homothetic. 
Theorem~\ref{thm1:1steigenvalue} implies
\begin{equation}\label{eq1:lambda1-Einstein}
\lambda_1(\Scomplex,g_{s,s\sqrt{{n}/{2(n-1)}}})
= \frac{n-1}{2n-1}s^2. 
\end{equation}

\begin{theorem}\label{thm1:nu-unstable}
The Einstein metric $\big(\Scomplex,g_{s,s\sqrt{{n}/{2(n-1)}}}\big)$ with Einstein constant $E$ satisfies 
$\lambda_1\big(\Scomplex,g_{s,s\sqrt{{n}/{2(n-1)}}}\big) <2E$. 
\end{theorem}

\begin{proof}
Throughout the proof, we abbreviate $\lambda_1=\lambda_1(\Scomplex,g_{s,s\sqrt{{n}/{2(n-1)}}})$. 
We have to show that $\lambda_1<2E$, where $E$ is the Einstein constant of $(\Scomplex,g_{s,s\sqrt{{n}/{2(n-1)}}})$. 

The scalar curvature of $(\Scomplex,g_{s,t})$ is given by \begin{equation}\label{eq1:scal}
\scal(\Scomplex,g_{s,t}) 
= \frac{1}{4n-2} \left(2n(2n-1)s^2 +2n(n-1)^2 t^2 -n(n-1)\frac{s^4}{2 t^2}\right)
\end{equation}
(see Kerr~\cite[p.~158]{Kerr96}; she considers the metric $\frac{2}{2n-1}g_{1/\sqrt{x_1},1/\sqrt{x_2}}$ with $n$ replaced by $n-1$, which explains the difference between the expressions). 
Thus, 
\begin{equation}
\begin{aligned}
E&
= \frac{\scal\big(\Scomplex,g_{s,s\sqrt{{n}/{2(n-1)}}}\big)}{\dim \Scomplex}\\ &
= \frac{1}{n(n+1)} \frac{(n+1)(n^2+n-1)s^2}{4n-2}
= \frac{n^2+n-1}{2n(2n-1)}s^2
.
\end{aligned}
\end{equation}

We conclude from \eqref{eq1:lambda1-Einstein} that $\lambda_1<2E$ if and only if $\frac{n-1}{2n-1}s^2<\frac{n^2+n-1}{n(2n-1)}s^2$, which is true. 
\end{proof}

\subsection{Spectral uniqueness}
The main goal of this subsection is to prove Theorem~\ref{thm0:disparityScomplex}.

It is well known that the volume is an spectral invariant, that is, isospectral manifolds have the same volume. 
Although we do not know an explicit expression for $\vol(\Scomplex,g_{s,t})$, 
the following identity will be enough for our purposes. 
\begin{equation}\label{eq1:vol}
\begin{aligned}
\vol(\Scomplex,g_{s,t}) &
= s^{\dim\fm} \, t^{\dim\fq} \vol(\Scomplex,g_{1,1}) \\&
= s^{2n} \, t^{n(n-1)} \vol(\Scomplex,g_{1,1})
. 
\end{aligned}
\end{equation}

We are finally in position to show that any two isospectral metrics in $\{(\Scomplex,g_{s,t}): s,t>0\}$ are necessarily isometric. 

\begin{proof}[Proof of Theorem~\ref{thm0:disparityScomplex}]
Let $s_i,t_i$ be positive numbers for $i=1,2$ such that 
\begin{equation*}
\Spec(\Scomplex,g_{s_1,t_1})=\Spec(\Scomplex,g_{s_2,t_2})
\end{equation*}
We want to show that $(s_1,t_1)=(s_2,t_2)$. 
Since the volume of $g_{s_1,t_1}$ and $g_{s_2,t_2}$ are the same, \eqref{eq1:vol} implies that 
\begin{equation}\label{eq1:vol(s_1,t_1)=vol(s_2,t_2)}
s_1^2t_1^{n-1}=s_2^2t_2^{n-1}
.
\end{equation} 
Consequently, it is sufficient to show that $s_1=s_2$ or $t_1=t_2$, since the other identity follows from \eqref{eq1:vol(s_1,t_1)=vol(s_2,t_2)}. 

The multiplicity of the first eigenvalue of $(\Scomplex,g_{s_1,t_1})$ and $(\Scomplex,g_{s_2,t_2})$ must coincide.  
Theorem~\ref{thm1:1steigenvalue} forces 
$s_1<2t_1^2 \iff s_2<2t_2^2$, 
$s_1=2t_1^2 \iff s_2=2t_2^2$, and 
$s_1>2t_1^2 \iff s_2>2t_2^2$. 
In any of the first two cases one has $\lambda_1(\Scomplex,g_{s_i,t_i})= s_i^2\tfrac{n}{2n-1}$, which implies $s_1=s_2$ as required. 

We now assume that $s_1>2t_1^2$, or equivalently $s_2>2t_2^2$. 
We have that $\lambda_1(\Scomplex,g_{s_i,t_i}) = s_i^2 \, \tfrac{1}{2n-1}+ t_i^2\, \tfrac{2n-2}{2n-1}$ by Theorem~\ref{thm1:1steigenvalue}, thus 
\begin{equation}\label{eq1:l1}
s_1^2+(2n-2)t_1^2= s_2^2+(2n-2)t_2^2.
\end{equation}

Suppose $t_1\neq t_2$ to arrive to a contradiction. 
We can assume without loosing of generality that $t_1<t_2$. 
From \eqref{eq1:vol(s_1,t_1)=vol(s_2,t_2)} and \eqref{eq1:l1}, we observe that $t_1$ and $t_2$ are roots of the polynomial
\begin{equation*}
p_i(x):= (2n-2)\,x^{n+1}- \big(s_i^2+(2n-2)t_i^2\big)\, x^{n-1} + s_i^2t_i^{n-1}.
\end{equation*}
Hence, there must be a critical point $x_0$ of $p_i(x)$ between $t_1$ and $t_2$. 
By $p_i'(x_0)=0$, 
\begin{equation*}
x_0^2=\frac{(2n-2)t_i^2+s_i^2}{(2n+2)}. 
\end{equation*}
Now, $t_2^2>x_0^2 =\frac{(2n-2)t_2^2+s_2^2}{2n+2}$ implies $4t_2^2> s_2^2$, while 
$t_1^2<x_0^2 =\frac{(2n-2)t_1^2+s_1^2}{2n+2}$ implies $4t_1^2<s_1^2$. 
We conclude by Lemma~\ref{lem1:lambda2} that  $\lambda_2(\Scomplex,g_{s_1,t_1})$ and $\lambda_2(\Scomplex,g_{s_2,t_2})$ have  different multiplicities, which is the desired contradiction.
\end{proof}

\section{The symmetric space of quaternionic structures}
\label{sec:case2}

In this section we consider the compact irreducible symmetric space $\Squaternionic$ of quaternionic structures on $\C^{2(n+1)}$ compatible with the Hermitian metric. 
It is known that $\Squaternionic$ admits a three-parameter family of homogeneous metrics (and, up to isometry, this family is believed to exhaust all of the homogeneous metrics on this space). 
We will obtain an explicit expression for the first eigenvalue of the Laplace-Beltrami operator
for each member of this family. 
From these expressions, we will deduce that any
symmetric metric on $\Squaternionic$ is spectrally unique within this family.

\subsection{Homogeneous metrics}
For any $n\geq2$, let 
\begin{equation}
\Squaternionic= \frac{\SU(2n+2)}{\Sp(n+1)}. 
\end{equation}
Since this presentation is symmetric, every $\SU(2n+2)$-invariant metric on $\Squaternionic$ is symmetric.  
We now introduce a non-symmetric presentation $G/H$ of $\Squaternionic$ having a three-parameter family of $G$-invariant metrics. 

We set 
\begin{equation}\label{eq2:Gkk'h}
\begin{aligned}
G&= 
\left\{
	\begin{pmatrix} 1 & 0\\ 0&A	\end{pmatrix}
	: A\in \SU(2n+1)
\right\}
,\qquad
\fg=
\left\{
	\begin{pmatrix} 0 & 0\\ 0&X	\end{pmatrix}
	: X\in \su(2n+1)
\right\},
\\
\fk&= 
\left\{ 
\begin{pmatrix}
0&0&0&0\\
0&X_1&0&X_2\\
0&0&z&0\\
0&X_3&0&X_4
\end{pmatrix}
\in\fg :
\begin{array}{l}
\begin{pmatrix}
X_1&X_2\\ X_3&X_4
\end{pmatrix} 
\in\ut(2n),\, z\in\mi\R, \\
\tr(X_1)+\tr(X_4)+z=0
\end{array} 
\right\}\simeq \mathfrak{s}(\ut(2n)\oplus\ut(1))
,
\\
\fk'&= 
\left\{ 
\begin{pmatrix}
0&0&0&0\\
0&X_1&0&X_2\\
0&0&0&0\\
0&X_3&0&X_4
\end{pmatrix}
\in\fg :
\begin{array}{l}
\begin{pmatrix}
X_1&X_2\\ X_3&X_4
\end{pmatrix} 
\in\su(2n) \\
\end{array} 
\right\}\simeq\su(2n).
\\
\fh &=
\left\{ 
\begin{pmatrix}
0&0&0&0\\
0&X&0&-\bar Y\\
0&0&0&0\\
0&Y&0&\bar X
\end{pmatrix}
\in\fg : 
X\in\ut(n), \, Y\in \gl(n,\R), Y^t=Y
\right\}
\simeq \spp(n)
.
\end{aligned}
\end{equation} 
We have that $G$ is a compact connected Lie group with Lie algebra $\fg$. 
Let $K,K',H$ be the connected subgroups of $G$ with Lie algebras $\fk$, $\fk'$, $\fh$, respectively. 
One has
\begin{equation}
\begin{aligned}
G&\simeq\SU(2n+1), \\
K&\simeq \op{S}(\Ut(2n)\times\Ut(1)), 
& &\qquad \text{and }\; H\subset K'\subset K\subset G,\\
K'&\simeq \SU(2n),\\
H&\simeq \Sp(n),
\end{aligned}
\end{equation}
so we are in the situation described in Subsection~\ref{subsec:cases1-2}. 
We pick $\kil=\kil_\fg$ (see Subsection~\ref{subsec:Freudenthal}) as our $\Ad(G)$-invariant inner product, which satisfies
\begin{equation}\label{eq2:kil_g}
\kil_\fg(X,Y)=-2(2n+1)\tr(XY)
\qquad\text{for all $X,Y\in\fg$}
.
\end{equation}

The action of $G$ on $\Squaternionic$ is still transitive and $H$ is precisely the isotropy subgroup at the trivial element (see \cite{Onishchik66-inclusion}).
Consequently, we have the new homogeneous presentation
\begin{equation}
\Squaternionic=G/H
.
\end{equation} 

We now recall some notation from Subsection~\ref{subsec:cases1-2}. 
We have the orthogonal decompositions (with respect to $\kil_\fg$)
$\fg =\fk\oplus\fp_1$, 
$\fk =\fk'\oplus\fp_2$, 
$\fk'=\fh\oplus\fp_3$, 
so the decomposition $\fp=\fp_1\oplus\fp_2\oplus\fp_3$ is as in \eqref{eq:decomposition-p}. 
For $r=(r_1,r_2,r_3)\in\R_{>0}^3$, the $\Ad(H)$-invariant inner product on $\fp$ given by
\begin{equation}
\innerdots_r 
= \frac{1}{r_1^2} \kil|_{\fp_1} 
\oplus \frac{1}{r_2^2} \kil|_{\fp_2} 
\oplus \frac{1}{r_3^2} \kil|_{\fp_3}
\end{equation}
induces the $G$-invariant metric $g_r$ on $\Squaternionic=G/H$.

As $H$-modules, $\fp_2$ is the trivial representation since $\dim \fp_2=1$, 
$\fp_3$ is irreducible since $K'/H$ is an irreducible symmetric space and it satisfies $(\fp_3)_\C\oplus \C\simeq \Lambda^2(\C^{2n})$, 
and $\fp_1$ is equivalent to the standard representation of $\Sp(n)$. 
Therefore, $\fp_1,\fp_2,\fp_3$ are irreducible and pairwise non-equivalent $H$-modules, thus every $G$-invariant metric on $\Squaternionic$ is isometric to $g_r$ for some $r\in\R_{>0}^3$.

\subsection{Root systems}
This subsection recalls the root systems associated to $G,K,K'$, and describe their unitary duals.

We fix the maximal torus in $G$ given by 
\begin{equation*}
T= \{\diag(1,e^{i\theta_1},\dots,e^{i\theta_{n}},
e^{i\theta_{2n+1}}, e^{i\theta_{n+1}},\dots,e^{i\theta_{2n}}): \theta_j\in\R\;\forall j,\; \theta_1+\dots+\theta_{2n+1}=0\}. 
\end{equation*}
The associated Cartan subalgebra $\mathfrak t_{\C}$ of $\mathfrak g_\C$ is a subspace of codimension one of
\begin{equation*}
\mathfrak u_\C:= \{\diag(0,\theta_1,\dots,\theta_n,\theta_{2n+1}, \theta_{n+1},\dots, \theta_{2n}):\theta_j\in\C\;\forall j\}. 
\end{equation*}
The non-standard order of the elements above will facilitate the notation in the sequel;
it simulates that the $(n+2)$-th row and column in \eqref{eq2:Gkk'h} are the last one. 

Let $\varepsilon_j\in\mathfrak u_\C^*$ {be} given by 
$
\varepsilon_j(\diag(0,\theta_1,\dots,\theta_n,\theta_{2n+1}, \theta_{n+1},\dots, \theta_{2n})) = \theta_j, 
$
for any $1\leq j\leq 2n+1$.
One has that $\Phi(\mathfrak g_\C,\mathfrak t_\C) = \{\pm (\varepsilon_i-\varepsilon_j): 1\leq i<j\leq 2n+1\}$, 
\begin{align*}
\mathfrak t_\C^* &= \{\textstyle \sum_{j=1}^{2n+1} a_j\varepsilon_j: a_j\in\C,\, \sum_{j=1}^{2n+1}a_j=0\},
\\
\PP(G)&= \{\textstyle \sum_{j=1}^{2n+1} a_j\varepsilon_j: a_j-a_{j+1}\in\Z\;\forall\, j,\;  \sum_{j=1}^{2n+1}a_j=0\}.
\end{align*}
For any $\sum_{j=1}^{2n+1} a_j\varepsilon_j\in \PP(G)$, it follows that $(2n+1)a_j\in\Z$ for every $j$.

We pick the fundamental Weyl Chamber determined by $\ee_1>\dots>\ee_{2n+1}$. 
It turns out that $\Phi^+(\mathfrak g_\C,\mathfrak t_\C)= \{\varepsilon_i-\varepsilon_j: 1\leq i<j\leq 2n+1\}$ and
\begin{equation*}
\PP^{+}(G) = \{\textstyle \sum_{j=1}^{2n+1} a_j\varepsilon_j\in \PP(G): a_1\geq\dots\geq a_{2n+1}\}. 
\end{equation*}

The group $K\simeq \op{S}(\Ut(2n)\times\Ut(1))$ is reductive with $1$-dimensional center given by $Z(K)= \{\diag(e^{\mi \theta}, \dots, e^{\mi \theta},e^{-2n \mi \theta}):\theta\in\R\}$. 
We note that $T$ is also a maximal torus of $K$, and we pick the same order as above. 
Thus
$\Phi^+(\mathfrak k_\C,\mathfrak t_\C)= \{\varepsilon_i-\varepsilon_j: 1\leq i<j\leq 2n\}$,  $\PP(K)=\PP(G)$, and 
\begin{equation}
\PP^{+}(K)= \{\textstyle \sum_{j=1}^{2n+1} b_j\varepsilon_j\in \PP(K)=\PP(G): b_1 \geq\dots\geq b_{2n}\}.
\end{equation}

We now analyze the subgroup $K'\simeq\SU(2n)$. 
Clearly, $T':=T\cap K'$ is a maximal torus of $K'$. 
Since $\fk=\fk'\oplus\fz(\fk)$, $\fk'$ is the semisimple part of $\fk$, so the inherited order on $\ft'$ from the one in $\ft$ gives $\Phi^+(\fk'_\C,\ft'_\C) = \Phi^+(\fk_\C,\ft_\C)$, 
\begin{equation}
\begin{aligned}
\PP(K')&= \{\textstyle \sum_{j=1}^{2n} b_j'\varepsilon_j : \sum_{j=1}^{2n}b_j'=0, \;b_j'-b_{j+1}'\in\Z\;\forall j\}, 
\\
\PP^{+}(K')&= \{\textstyle \sum_{j=1}^{2n} b_j'\varepsilon_j\in \PP(K'): b_1' \geq\dots\geq b_{2n}'\}.
\end{aligned}
\end{equation}
It is noteworthy that $\PP(K)\neq \PP(K')$. 

Via the Highest Weight Theorem, we denote by $\pi_{\Lambda}$, $\tau_\mu$, $\tau_{\mu'}'$ the (unique up to equivalence) irreducible representation of $G$, $K$, $K'$ with highest weight $\Lambda\in\PP^+(G)$,  $\mu\in\PP^+(K)$, $\mu'\in\PP^+(K')$, respectively. 

We next introduce a tool that facilitates the parametrization of elements in $\PP(G)$ and $\PP(K)$, which was used e.g.\ in \cite[\S6]{LM-repequiv2}.

\begin{notation}\label{notation2:pr}
It is convenient to define the projections $\pr: \mathfrak u_\R^*= \op{span}_\R\{\varepsilon_1,\dots, \varepsilon_{2n+1}\} \to \mathfrak t_\R^*:= \{\sum_{j=1}^{2n+1} a_j \varepsilon_j\in \mathfrak u_\R^+: \sum_{j=1}^{2n+1} a_j=0\}$ and $\pr':(\ut_\R')^*= \op{span}_\R\{\varepsilon_1,\dots, \varepsilon_{2n}\} \to (\ft_\R')^* = \{\sum_{j=1}^{2n} a_j \varepsilon_j\in (\ut_\R')^*: \sum_{j=1}^{2n+1} a_j=0\}$
given by 
\begin{equation}\label{eq2:proyecciones}
\begin{aligned}
\pr&\left(\sum_{j=1}^{2n+1} a_j\varepsilon_j\right) = \sum_{j=1}^{2n+1} a_j\varepsilon_j-\frac{1}{2n+1} \left(\sum_{j=1}^{2n+1}a_j\right) (\varepsilon_1+\dots+\varepsilon_{2n+1}),
\\
\pr'&\left(\sum_{j=1}^{2n} a_j\varepsilon_j\right) = \sum_{j=1}^{2n} a_j\varepsilon_j-\frac{1}{2n} \left(\sum_{j=1}^{2n}a_j\right) (\varepsilon_1+\dots+\varepsilon_{2n}),
\end{aligned}
\end{equation}
respectively. 
It follows that 
$
\pr(\bigoplus_{j=1}^{2n+1} \Z \varepsilon_j)=  \PP(G)
$
and 
$
\PP(K')=\pr'(\bigoplus_{j=1}^{2n}\Z\ee_j). 
$

For instance, $\omega_p:=\pr(\ee_1+\dots+\ee_p)$ for $p=1,\dots,2n$ and $\omega_q':=\pr(\ee_1+\dots+\ee_q)$ for $q=1,\dots,2n-1$, are the fundamental weights of $\Phi^+(\fg_\C,\ft_\C)$ and $\Phi^+(\fk_\C',\ft_\C)$ respectively.
\end{notation}

\subsection{Spectrum}
The goal of this subsection is to describe $\Spec(\Squaternionic,g_{r})$ for any $r\in\R_{>0}^3$. 
We first need to obtain the ingredients in Proposition~\ref{prop:spec3}. 
We start with the branching laws, which are all already known, and then we calculate the Casimir eigenvalues.

Let $\Lambda =\sum_{i=1}^{2n+1}a_i\ee_i \in\PP^+(G)$,  $\mu=\sum_{i=1}^{2n+1} b_i\ee_i \in\PP^+(K)$, and $\mu' \in\PP^+(K')$.
By \cite[p.~13]{Camporesi05Pacific}, 
\begin{equation}\label{eq2:branchingG->K}
\begin{aligned}
{}
[\tau_\mu:\pi_{\Lambda}|_K]>0 &\iff  [\tau_\mu:\pi_{\Lambda}|_K]=1
\\ & \iff 
a_{1}-b_1\in \Z
\text{ and }
a_1\geq b_1\geq a_2\geq b_2\geq \dots \geq a_{2n}\geq b_{2n}\geq a_{2n+1}.
\end{aligned}
\end{equation} 
Furthermore,
\begin{equation}\label{eq2:branchingK->K'}
\begin{aligned}
{}
[\tau_{\mu'}:\tau_{\mu}|_{K'}]>0 
&\iff [\tau_{\mu'}:\tau_{\mu}|_{K'}]=1
\iff \mu'=\sum_{j=1}^{2n} (b_j+\tfrac{b_{2n+1}}{2n})\varepsilon_j,
\end{aligned}
\end{equation}
and \cite[\S12.3.3]{GoodmanWallach-book-Springer} tells us that
\begin{equation}\label{eq2:branchingK'->H}
[1_H:\tau_{\mu'}'|_{H}]>0 
\Longleftrightarrow
[1_H:\tau_{\mu'}'|_{H}]=1
\Longleftrightarrow
\mu '\in \omega_2'\N_0 \oplus\omega_4'\N_0 \oplus \dots\oplus \omega_{2n-2}'\N_0.
\end{equation}
Note that, by \eqref{eq2:branchingK->K'}, $\mu'$ is uniquely determined by $\mu$.

\begin{lemma}\label{lem2:escalares}
For $\Lambda=\sum_{i=1}^{2n+1} a_i\ee_i \in\PP^+(G)$,
$\mu=\sum_{i=1}^{2n+1} b_i\ee_i \in\PP^+(K)$, 
$\mu'=\sum_{i=1}^{2n} b_i'\ee_i \in\PP^+(K')$, 
one has
\begin{equation}
\begin{aligned}
\lambda_{\kil_\fg}^{\pi_\Lambda}&=  \lambda^{\pi_\Lambda}
	&&\qquad\text{where }&\lambda^{\pi_\Lambda}&= \frac{1}{4n+2} \sum_{i=1}^{2n+1} a_i(a_i-2i)
,\\
\lambda_{\kil_\fg|_{\fk}}^{\tau_\mu} &
= \frac{2n}{2n+1}\lambda^{\tau_{\mu_0'}'} + \frac{1}{4n} b_{2n+1}^2
	&&\qquad\text{where }&\mu_0'&=\sum_{j=1}^{2n} (b_j+\frac{b_{2n+1}}{2n}) \ee_j
,\\
\lambda_{\kil_\fg|_{\fk'}}^{\tau_{\mu'}'} &
=\frac{2n}{2n+1} \lambda^{\tau_{\mu'}'}
	&&\qquad\text{where } &\lambda^{\tau_{\mu'}'}&=\frac{1}{4n}  \sum_{i=1}^{2n} b_i'\big(b_i'-2i\big)
.
\end{aligned}
\end{equation}
\end{lemma}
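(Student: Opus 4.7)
The three identities will follow from Freudenthal's formula \eqref{eq:Freudenthal} combined with the rescaling relation \eqref{eq:FreudenthalB_k} and, for the middle identity, a decomposition of the $\fk$-Casimir into its semisimple and central parts.

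For the first identity, the plan is to determine $\kil_\fg^*$ on $\ft_\C^*$ and then apply Freudenthal. Since the adjoint of $G\simeq\SU(2n+1)$ is $\pi_{\ee_1-\ee_{2n+1}}$ with $\lambda^{\Ad_G}=1$, and since the identity \eqref{eq2:kil_g} together with the observation that $\ee_1+\dots+\ee_{2n+1}$ pairs trivially with elements of $\ft_\C^*$ determines $\kil_\fg^*(\ee_i,\ee_j)=\frac{1}{2(2n+1)}\delta_{ij}$ when paired against weights summing to zero, I will next compute $\rho_\fg=\sum_{j=1}^{2n+1}(n+1-j)\ee_j$ and substitute into \eqref{eq:Freudenthal}. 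Using $\sum a_j=0$ to absorb the constant shift in $\rho_\fg$, the sum $\sum a_j(a_j+2(n+1-j))$ collapses to $\sum a_j(a_j-2j)$, giving the first identity.

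The third identity is proved analogously for $\fk'\simeq\su(2n)$: the same argument yields $\kil_{\fk'}^*(\ee_i,\ee_j)=\frac{1}{4n}\delta_{ij}$ on $(\ft'_\C)^*$ and $\rho_{\fk'}=\sum_{j=1}^{2n}(n+\tfrac12-j)\ee_j$, whence Freudenthal delivers the stated formula for $\lambda^{\tau_{\mu'}'}$. The prefactor $\frac{2n}{2n+1}$ then follows from \eqref{eq:FreudenthalB_k} with $c=\frac{2n}{2n+1}$, valid because $\kil_{\fk'}=-4n\tr=\frac{2n}{2n+1}\,\kil_\fg|_{\fk'}$.

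For the second identity I will use the $\kil_\fg|_\fk$-orthogonal decomposition $\fk=\fk'\oplus\fz(\fk)$ to split
\begin{equation*}
\Cas_{\fk,\kil_\fg|_\fk}=\Cas_{\fk',\kil_\fg|_{\fk'}}+\Cas_{\fz(\fk),\kil_\fg|_{\fz(\fk)}}.
\end{equation*}
By \eqref{eq2:branchingK->K'}, $\tau_\mu|_{K'}\simeq\tau_{\mu_0'}'$, so the $\fk'$-part contributes $\frac{2n}{2n+1}\lambda^{\tau_{\mu_0'}'}$ by the third identity. For the central part, Schur forces $\tau_\mu(Z)$ to be a scalar for $Z\in\fz(\fk)$; picking the explicit generator $Z_0=\diag(0,\mi,\dots,\mi,-2n\mi,\mi,\dots,\mi)$ (with $-2n\mi$ in the slot corresponding to $\ee_{2n+1}$), a direct computation using $\sum_j b_j=0$ gives $\mu(Z_0)=-\mi(2n+1)b_{2n+1}$ and $\kil_\fg(Z_0,Z_0)=4n(2n+1)^2$, so the central contribution to $-\Cas$ is $-\mu(Z_0)^2/\kil_\fg(Z_0,Z_0)=b_{2n+1}^2/(4n)$. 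Adding the two contributions yields the second identity. The main obstacle is the normalization bookkeeping in the central part, together with checking that shifts by multiples of $\ee_1+\dots+\ee_{2n+1}$ are invisible; the rest is a direct application of Freudenthal and \eqref{eq:FreudenthalB_k}.
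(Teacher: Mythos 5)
Your proposal is correct and follows essentially the same route as the paper: Freudenthal's formula with the normalization $\kil_\fg^*(\ee_i,\ee_j)=\tfrac{1}{4n+2}\delta_{ij}$ fixed by $\lambda^{\Ad}=1$, the rescaling \eqref{eq:FreudenthalB_k} with $c=\tfrac{2n}{2n+1}$, and the splitting $\Cas_{\fk,\kil_\fg|_\fk}=\Cas_{\fk',\kil_\fg|_{\fk'}}+\Cas_{\fz(\fk),\kil_\fg|_{\fz(\fk)}}$ evaluated on a central generator via Schur. The only cosmetic difference is that you work with the unnormalized generator $Z_0$ and divide by $\kil_\fg(Z_0,Z_0)=4n(2n+1)^2$, whereas the paper normalizes $X_0$ first; the computations agree.
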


\begin{proof}
We will use Freudenthal formula (see Subsection~\ref{subsec:Freudenthal}) several times.
We denote by $\langle\cdot,\cdot\rangle$ the extension of $\kil_{\fg}^*$ to $\mathfrak u_\C^*$. 
One can check that $\langle \varepsilon_i,\varepsilon_j\rangle =\tfrac{1}{4n+2}\delta_{i,j}$ for all $1\leq i,j\leq 2n+1$.

Since $2\rho_\fg= \sum_{i=1}^{2n+1} 2(n+1-i)\ee_i$, we have that
$
\lambda^{\pi_\Lambda} 
= \inner{\Lambda+2\rho_\fg}{\Lambda}
= \frac{1}{4n+2} \sum_{i=1}^{2n+1} a_i\big(a_i+2(n+1-i)\big) =\frac{1}{4n+2} \sum_{i=1}^{2n+1} a_i(a_i-2i),
$ 
as claimed. 

From $\kil_{\fk'}=\frac{2n}{2n+1} \kil_{\fg}|_{\fk'}$ (see e.g.\ \cite[Table 1, pág. 37]{DAtriZiller}), it follows $\lambda_{\kil_\fg}^{\tau_{\mu'}'}=\frac{2n}{2n+1} \lambda^{\tau_{\mu'}'}$ by \eqref{eq:FreudenthalB_k}. 
Furthermore, since $2\rho_{\fk'} = \sum_{i=1}^{2n} (2n+1-2i)\ee_i$ and $\kil_{\fk'}^*({\ee_i},{\ee_j})
=\frac{\delta_{i,j}}{4n}$, we have that $\lambda^{\tau_{\mu'}'} 
= \kil_{\fk'}^* (\mu'+2\rho_{\fk'}, \mu')
= \frac{1}{4n} \sum_{i=1}^{2n} b_i'\big(b_i'+2n+1-2i\big) = \frac{1}{4n} \sum_{i=1}^{2n} b_i'\big(b_i'-2i\big)
$ because $\sum_{i=1}^{2n}b_i'=0$.

We next consider the restriction of $\tau_{\mu}\in\widehat K$ to $K'$. 
This case is more delicate than the previous ones because $K$ is not semi-simple.
Since $\fk=\fk'\oplus\fz(\fk)$ and $\dim \fz(\fk)=1$, we have that $\Cas_{\fk,\kil_\fg|_{\fk}}= \Cas_{\fk',\kil_\fg|_{\fk'}}+X_0^2$ for any $X_0\in\fz(\fk)$ satisfying $\kil_\fg(X_0,X_0)=1$, thus
\begin{equation*}
\lambda_{\kil_\fg|_{\fk}}^{\tau} 
= \lambda_{\kil_\fg|_{\fk'}}^{\tau|_{K'}} + \widetilde \lambda^{\tau}
,
\end{equation*} 
where $-\tau(X_0)^2\cdot v=\widetilde \lambda^{\tau} \,v$ for all $v\in V_\tau$, for any $\tau\in\widehat K$.  
Note that $\tau_\mu|_{K'}=\tau_{\mu_0'}'$ with $\mu_0'=\sum_{j=1}^{2n} (b_j+\frac{b_{2n+1}}{2})\ee_j$ by \eqref{eq2:branchingK->K'}. 
It only remains to compute $\widetilde \lambda^{\tau}$. 

By using \eqref{eq2:kil_g}, one can easily check that the element in $\fz(\fk)$ given by 
\begin{equation}
X_0:=\frac{\mi}{2\sqrt{n}(2n+1)}
\begin{pmatrix}
0&0&0&0\\
0&\Id_n&0&0\\
0&0&-2n&0\\
0&0&0&\Id_n
\end{pmatrix}
\end{equation}
satisfies $\kil_\fg(X_0,X_0)=1$. 
Since $\tau_\mu\big(X_0\big)$ acts on $V_{\tau_\mu}$ by a scalar by Schur's Lemma, it is sufficient to evaluate on any non-trivial element, for instance, a highest weight vector, that is, $v\in V_{\tau_{\mu}}(\mu)$. 
One has that
\begin{equation}
\begin{aligned}
\tau_\mu\big(X_0\big) \cdot v &
= \mu(X_0)\, v
= \frac{\mi}{2\sqrt{n}(2n+1)} \Big(\sum_{j=1}^{2n+1}b_j\ee_j(X_0)\Big)\, v,
\\ & 
= \frac{\mi}{2\sqrt{n}(2n+1)} \left( \sum_{j=1}^{2n}b_j -2nb_{2n+1} \right)\, v
= \frac{-\mi b_{2n+1}}{2\sqrt{n}}\, v
,
\end{aligned}
\end{equation}
and the claim follows. 
 \end{proof}

We are now in position to describe $\Spec(\Squaternionic,g_{r})$ for any $r=(r_1,r_2,r_3)\in\R_{>0}^3$ by combining Proposition~\ref{prop:spec3} and the ingredients \eqref{eq2:branchingG->K}, \eqref{eq2:branchingK->K'},  \eqref{eq2:branchingK'->H}, and Lemma~\ref{lem2:escalares}. 
For each $\Lambda=\sum_{i=1}^{2n+1} a_i\ee_i$ and $\mu=\sum_{i=1}^n b_i\ee_i \in \PP^+(K)$ such that \eqref{eq2:branchingG->K} holds and $\mu':= \sum_{i=1}^{2n} (b_i+\tfrac{b_{2n+1}}{2n}) \in \omega_2'\N_0 \oplus\omega_4'\N_0 \oplus \dots\oplus \omega_{2n-2}'\N_0$, the eigenvalue
\begin{equation}\label{eq2:lambda^pitautau'}
\begin{aligned}
\lambda^{\pi_\Lambda,\tau_\mu,\tau_{\mu'}'}(r)&
= r_1^2\, \lambda^{\pi} 
+ (r_2^2-r_1^2) \, (\tfrac{2n}{2n+1}\lambda^{\tau_{\mu'}'} + \tfrac{1}{4n} b_{2n+1}^2)
+ (r_3^2-r_2^2)\, \tfrac{2n}{2n+1} \lambda^{\tau_{\mu'}'}
\\ &
= r_1^2\, \Big(
	\lambda^{\pi} -\tfrac{2n}{2n+1}\lambda^{\tau_{\mu'}'} 
	- \tfrac{1}{4n} b_{2n+1}^2 
\Big)
+ r_2^2 \, 
	\tfrac{1}{4n} b_{2n+1}^2
+ r_3^2\, \tfrac{2n}{2n+1} \lambda^{\tau_{\mu'}'}
\end{aligned}
\end{equation}
contributes to $\Spec(\Squaternionic,g_{r})$ with multiplicity $d_{\pi_{\Lambda}}=\dim V_{\pi_{\Lambda}}$, 
and all of them together fill $\Spec(\Squaternionic,g_r)$.

The next goal is to state a more technical description of $\Spec(\Squaternionic,g_r)$, Theorem~\ref{thm2:spec-technical}, that will be very useful in determining the first eigenvalue $\lambda_1(\Squaternionic,g_r)$. 
This requires notation.

\begin{notation}\label{notation2:Juan}
For $q=(q_1,\dots\,q_{n-1})\in \N_0^{n-1}$, 
write $Q_i=q_i+\dots+q_{n-1}$ for each $1\leq i\leq n-1$,  $Q_n=0$, and $\widetilde Q =\sum_{i=1}^n Q_i=q_1+2q_2+\dots (n-1)q_{n-1}$.
For $l=(l_1,\dots,l_n)\in\N_0^n$, write $L=\sum_{i=1}^nl_i$. 

We associate to $q=(q_1,\dots\,q_{n-1}) \in\N_0^{n-1}$, $k\in\Z$, and $l=(l_1,\dots,l_n)\in\N_0^n$ the following weights:
\begin{equation}\label{eq2:highestweights}
\begin{aligned}
\mu_q'&=\sum_{i=1}^{n-1} q_i\omega_{2i}'
,\\
\mu_{q,k}&=\pr\left(\sum_{i=1}^{n}Q_i(\ee_{2i-1} +\ee_{2i})+k\ee_{2n+1}\right)
,\\
\Lambda_{q,k,l}&= \pr\left(
	\sum\limits_{i=1}^{n} \Big(l_i+Q_i)\ee_{2i-1}+Q_i\ee_{2i}  \Big)
	+(k-L)\ee_{2n+1}
\right)
.
\end{aligned}
\end{equation}
($\omega_i'$ for $i=1,\dots,2n-1$ were introduced in Notation~\ref{notation2:pr}.)

We abbreviate $\tau_q'=\tau_{\mu_q'}'$ and $\tau_{q,k}=\tau_{\mu_{q,k}}$, which has sense since $\mu_q' \in\PP^+(K')$ and $\mu_{q,k} \in\PP^+(K)$. 
If $k\leq L$ and $l_i\leq q_{i-1}$ for all $2\leq i\leq n$, then $\Lambda_{q,k,l}\in\PP^+(G)$ and we abbreviate $\pi_{q,k,l}=\pi_{\Lambda_{q,k,l}}$. 
\end{notation}

\begin{lemma}
One has that 
\begin{align}
\label{eq2:mu_q'}
\mu'_q &
= \sum_{i=1}^{n} \left(Q_i-\tfrac{\widetilde{Q}}{n}\right) (\ee_{2i-1} +\ee_{2i})
,
\\ \label{eq2:mu_qk}
\mu_{q,k}&
=\sum_{i=1}^{n-1}(Q_i-\tfrac{k+2\widetilde Q}{2n+1})(\ee_{2i-1}+\ee_{2i})
+ \tfrac{2n}{2n+1}(k-\tfrac{\wQ}{n}) \ee_{2n+1}
,
\\
\label{eq2:Lambda_qkl}
\Lambda_{q,k,l}&
= \sum_{i=1}^{n} \left(
	(l_i+Q_i-\tfrac{k+2\wQ}{2n+1}) \ee_{2i-1} + (Q_i-\tfrac{k+2\wQ}{2n+1}) \ee_{2i}
\right)
	+(k- L - \tfrac{k+2\widetilde Q}{2n+1})\ee_{2n+1}
.
\end{align}
\end{lemma}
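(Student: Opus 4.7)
The lemma is a direct consequence of the definitions: all three formulas amount to unwinding $\pr$ and $\pr'$ after expressing the arguments in the basis $\{\ee_j\}$. The plan is to first record the combinatorial identity
\begin{equation*}
\sum_{i=1}^n Q_i \;=\; \sum_{i=1}^n \sum_{j=i}^{n-1} q_j \;=\; \sum_{j=1}^{n-1} j\, q_j \;=\; \widetilde Q,
\end{equation*}
which controls the total coefficient sums, and hence the ``shifts'' subtracted by the projections in \eqref{eq2:proyecciones}.

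For \eqref{eq2:mu_q'}, I would write $\omega'_{2i}=\pr'(\ee_1+\dots+\ee_{2i})$, apply linearity of $\pr'$, and count that the coefficient of $\ee_{2a-1}$ and $\ee_{2a}$ in $\sum_{i=1}^{n-1} q_i(\ee_1+\dots+\ee_{2i})$ equals $\sum_{i\geq a} q_i = Q_a$ for $1\leq a\leq n-1$, while it is $0 = Q_n$ for $a=n$. The total of these $2n$ coefficients is $2\widetilde Q$ by the identity above, so $\pr'$ subtracts $\widetilde Q/n$ from each coordinate, yielding \eqref{eq2:mu_q'}.

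For \eqref{eq2:mu_qk} and \eqref{eq2:Lambda_qkl}, the same method applies. Since $Q_n=0$, the argument of $\pr$ defining $\mu_{q,k}$ has total coefficient $2\widetilde Q+k$. The argument defining $\Lambda_{q,k,l}$ has total coefficient
\begin{equation*}
\sum_{i=1}^n\bigl((l_i+Q_i)+Q_i\bigr)+(k-L) \;=\; L + 2\widetilde Q + (k-L) \;=\; 2\widetilde Q+k
\end{equation*}
as well, so in both cases $\pr$ subtracts $(k+2\widetilde Q)/(2n+1)$ from every coordinate. Reading off coefficients gives \eqref{eq2:mu_qk} and \eqref{eq2:Lambda_qkl}, using the one-line simplification
\begin{equation*}
k-\frac{k+2\widetilde Q}{2n+1} \;=\; \frac{2nk-2\widetilde Q}{2n+1} \;=\; \frac{2n}{2n+1}\Bigl(k-\frac{\widetilde Q}{n}\Bigr)
\end{equation*}
to put the $\ee_{2n+1}$-coefficient of $\mu_{q,k}$ into the stated form.

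Every step is a rearrangement, so there is no genuine obstacle; the only place where one could slip is in verifying the combinatorial identity $\sum_i Q_i=\widetilde Q$ and in tracking which coordinates receive the uniform shift.
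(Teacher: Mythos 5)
Your proof is correct and is exactly the computation the paper's own proof dismisses as ``tedious but straightforward'': the identity $\sum_{i=1}^n Q_i=\widetilde Q$ plus the observation that $\pr$ and $\pr'$ subtract a uniform shift equal to the average coefficient is all that is needed. One caveat: reading off coefficients actually yields \eqref{eq2:mu_qk} with the sum running to $i=n$, the $i=n$ term being $-\tfrac{k+2\widetilde Q}{2n+1}(\ee_{2n-1}+\ee_{2n})$ since $Q_n=0$; the printed upper limit $n-1$ is a typo in the statement (the coefficients of the printed right-hand side do not sum to zero, so it cannot lie in $\PP(K)$), and the paper itself uses the corrected version, with $b_{2i-1}=b_{2i}=Q_i-\tfrac{k+2\widetilde Q}{2n+1}$ for all $i=1,\dots,n$, in the proof of Theorem~\ref{thm2:spec-technical}. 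Your derivation is therefore the right one; you should just flag that it does not literally reproduce the displayed formula.
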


\begin{proof}
It follows by tedious but straightforward calculations from \eqref{eq2:proyecciones}. 
\end{proof}

\begin{theorem}\label{thm2:spec-technical}
For $r=(r_1,r_2,r_3)\in\R_{>0}^3$, the spectrum of the Laplace-Beltrami operator of $(\Squaternionic,g_{r})$ is given by 
\begin{equation}
\Spec(\Squaternionic,g_r)= \bigcup_{q\in\N_0^{n-1}} \bigcup_{\stackrel{l\in\N_0^n:}{l_i\leq q_{i-1}\,\forall i} } \; \bigcup_{\stackrel{k\in\Z:}{k\leq L}} 
\;
\Big\{\!\!\Big\{ 
	\underbrace{\lambda^{(q,k,l)},\dots, \lambda^{(q,k,l)}}_{\dim V_{\pi_{q,k,l}}}
\Big\}\!\!\Big\},
\end{equation}
where 
\begin{equation}\label{eq2:lambda^(q,k,l)}
\begin{aligned}
\lambda^{(q,k,l)}(r)
= \Psi_1(q,k,l)\, r_1^2+ \Psi_2(q,k)\, r_2^2+ \Psi_3(q)\, r_3^2
\end{aligned}
\end{equation}
with 
\begin{align}\label{eq2:Psi1}
\Psi_1(q,k,l)&
= \frac{1}{4n+2} \left(\sum_{i=1}^{n}l_i(l_i+2Q_i-4i) +2\wQ +L^2+L(4n+4 )-2k(L+n)\right) 
\\ \label{eq2:Psi2}
\Psi_2(q,k)&
=\frac{(nk-\wQ)^2}{n(2n+1)^2},
\\ \label{eq2:Psi3}
\Psi_3(q)&
=\frac{1}{2n+1}\left(
	\sum_{i=1}^{n} Q_i(Q_i-4i)-\frac{\wQ^2}{n}+2(n+1)\wQ
\right)
.
\end{align}
\end{theorem}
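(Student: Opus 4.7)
The plan is to translate the abstract description of $\Spec(\Squaternionic,g_r)$ provided by the general formula \eqref{eq2:lambda^pitautau'} into the explicit parametrization by $(q,k,l)$. By the branching laws \eqref{eq2:branchingG->K}--\eqref{eq2:branchingK'->H}, each of the multiplicity factors $[1_H:\tau'_{\mu'}|_H]$, $[\tau'_{\mu'}:\tau_\mu|_{K'}]$, $[\tau_\mu:\pi_\Lambda|_K]$ equals $0$ or $1$, so every admissible triple $(\Lambda,\mu,\mu')$ contributes to the spectrum a single eigenvalue $\lambda^{\pi_\Lambda,\tau_\mu,\tau'_{\mu'}}(r)$ with multiplicity exactly $\dim V_{\pi_\Lambda}$.

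The first main step is to set up a bijection between admissible triples and parameters $(q,k,l)\in\N_0^{n-1}\times\Z\times\N_0^n$ subject to the constraints of the theorem. I would read the branching laws starting from $H$ and moving outward. First, \eqref{eq2:branchingK'->H} forces $\mu'=\mu_q'$ for a unique $q\in\N_0^{n-1}$. Second, given $\mu_q'$, the formula in \eqref{eq2:branchingK->K'} determines $\mu$ uniquely from its $\ee_{2n+1}$-component, which I would reparametrize by an integer $k$ so that $\mu=\mu_{q,k}$; this $k$ encodes the character of the central $U(1)$ factor of $K\simeq S(U(2n)\times U(1))$. Third, given $\mu_{q,k}$, the interlacing in \eqref{eq2:branchingG->K} on the unprojected coordinates of $\Lambda$ and $\mu_{q,k}$ amounts to $l_i:=a_{2i-1}-a_{2i}\in\N_0$, together with $l_i\leq q_{i-1}$ for $2\leq i\leq n$ (from $b_{2i-2}\geq a_{2i-1}$) and $k\leq L$ (from $b_{2n}\geq a_{2n+1}$), so $\Lambda=\Lambda_{q,k,l}$ for a unique admissible $l\in\N_0^n$.

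Once the parametrization is in place, the remaining task is to substitute the explicit highest weights \eqref{eq2:mu_q'}--\eqref{eq2:Lambda_qkl} into Lemma~\ref{lem2:escalares} and collect the result via \eqref{eq2:lambda^pitautau'} as a sum of three terms indexed by $r_1^2$, $r_2^2$, $r_3^2$. The $r_2^2$ coefficient is essentially automatic: from \eqref{eq2:mu_qk} the $\ee_{2n+1}$-component of $\mu_{q,k}$ is $\tfrac{2(nk-\wQ)}{2n+1}$, and $\tfrac{1}{4n}$ of its square is $\Psi_2(q,k)$. The $r_3^2$ coefficient $\tfrac{2n}{2n+1}\lambda^{\tau'_{\mu'_q}}$ is computed by substituting $b'_{2i-1}=b'_{2i}=Q_i-\wQ/n$ into the formula for $\lambda^{\tau'}$ and reorganizing via $\sum_i Q_i=\wQ$ and $\sum_{i=1}^n i=n(n+1)/2$, which collapses it to $\Psi_3(q)$.

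The main obstacle is verifying that the $r_1^2$ coefficient, namely
\begin{equation*}
\lambda^{\pi_{\Lambda_{q,k,l}}}-\tfrac{2n}{2n+1}\lambda^{\tau'_{\mu_q'}}-\tfrac{1}{4n}b_{2n+1}^2,
\end{equation*}
simplifies to $\Psi_1(q,k,l)$. Expanding $\lambda^{\pi_{\Lambda_{q,k,l}}}=\tfrac{1}{4n+2}\sum_i a_i(a_i-2i)$ using the $a_i$'s from \eqref{eq2:Lambda_qkl} introduces the projection-shift $c:=\tfrac{k+2\wQ}{2n+1}$ in every coordinate, producing many cross terms in $l_i, Q_i, \wQ, k, L$. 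I expect the delicate part will be tracking several layers of cancellation: the $k^2$ and $k\wQ$ contributions from $-(2n+1)c^2$ must combine with the subtracted $\tfrac{1}{4n}b_{2n+1}^2$, collapsing (via the identity $4n^2+6n+2=2(n+1)(2n+1)$) to $-k^2-2\wQ^2/n$; the $2\wQ^2/n$ and $4(n+1)\wQ$ pieces must then cancel against the subtracted $\tau'$-Casimir; and the remaining $l$- and $k$-dependent pieces must rearrange, using $\sum_i Q_i=\wQ$ and $\sum_i l_i=L$, into the closed form $\Psi_1(q,k,l)$.
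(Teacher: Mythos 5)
Your proposal is correct and follows essentially the same route as the paper: parametrize the admissible triples $(\Lambda,\mu,\mu')$ by $(q,k,l)$ by reading the branching laws from $H$ outward (with $k$ recovering the central $\Ut(1)$-character and $l_i=a_{2i-1}-a_{2i}$ from the interlacing, the constraint $k\leq L$ coming from $b_{2n}\geq a_{2n+1}$ combined with the trace-zero condition), and then substitute the explicit highest weights into Lemma~\ref{lem2:escalares} to obtain $\Psi_1,\Psi_2,\Psi_3$. The computational checkpoints you identify (the $\ee_{2n+1}$-component yielding $\Psi_2$, the collapse of the $\tau'$-Casimir to $\Psi_3$, and the cancellations in the $r_1^2$ coefficient) are exactly the "straightforward calculations" the paper leaves to the reader, and they work out as you describe.
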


\begin{proof}
As we mentioned before \eqref{eq2:lambda^pitautau'}, every eigenvalue is of the form $\lambda^{\pi_\Lambda,\tau_\mu,\tau_{\mu'}'}(r)$, contributing to $\Spec(\Squaternionic,g_r)$ with multiplicity $\dim V_{\pi_{\Lambda}}$, for each $\Lambda=\sum_{i=1}^{2n+1} a_i\ee_i\in\PP^+(G)$ and $\mu=\sum_{i=1}^{2n+1} b_i\ee_i \in \PP^+(K)$ such that \eqref{eq2:branchingG->K} holds and $\mu':= \sum_{i=1}^{2n} (b_i+\tfrac{b_{2n+1}}{2n}) \in \omega_2'\N_0 \oplus\omega_4'\N_0 \oplus \dots\oplus \omega_{2n-2}'\N_0$. 
The goal is to show that such choices are parameterized by $q\in\N_0^{n-1}$, $l\in\N_0^{n}$ and $k\in\Z$ satisfying $l_i\leq q_{i-1}$ for all $i$ and $k\leq L$, via $\mu'=\mu_q'$, $\mu=\mu_{q,k}$, and $\Lambda=\Lambda_{q,k,l}$.

We fix $\mu,\Lambda$ as above. 
Clearly, there is $q\in\N_0^{n-1}$ such that $\mu'=\mu_q'$. 
By \eqref{eq2:mu_q'}, $b_{2i-1}+ \frac{b_{2n+1}}{2n}=b_{2i}+ \frac{b_{2n+1}}{2n} =Q_i-\frac{\wQ}{n}$ for all $i=1,\dots,n$. 
We set $k=b_{2n+1}-b_{2n}= b_{2n+1}(1+\tfrac{1}{2n})+\tfrac{\widetilde Q}{n} = \tfrac{2n+1}{2n} b_{2n+1}+\tfrac{\widetilde Q}{n}$, thus
$
b_{2n+1}= \frac{2n}{2n+1}(k-\tfrac{\widetilde Q}{n})
$
and 
$b_{2i-1}=b_{2i}=
Q_i-\tfrac{\widetilde Q}{n}-\tfrac{1}{2n+1}(k-\tfrac{\widetilde Q}{n})
=
Q_i-\tfrac{k+2\widetilde Q}{2n+1} 
$ for any $i=1,\dots, n$. 
Hence, $\mu=\sum_{i=1}^{2n+1}b_i\ee_i=\mu_{q,k}$ by \eqref{eq2:mu_qk}. 

Since \eqref{eq2:branchingG->K} holds, we have that
$a_i+\tfrac{k+2\widetilde Q}{2n+1}\in\Z$ for all $1\leq i\leq 2n+1$, 
\begin{equation*}
\begin{aligned}
a_1&\geq Q_1-\tfrac{k+2\widetilde Q}{2n+1} \geq a_2 \geq Q_1- \tfrac{k+2\widetilde Q}{2n+1},
\\ 
Q_{i-1}-\tfrac{k+2\widetilde Q}{2n+1}\geq a_{2i-1}&\geq Q_i-\tfrac{k+2\widetilde Q}{2n+1}\geq a_{2i}\geq  Q_i- \tfrac{k+2\widetilde Q}{2n+1} \qquad \text{for all } i=2,\dots,n
,
\end{aligned}
\end{equation*}
and $-\tfrac{k+2\widetilde Q}{2n+1}\geq a_{2n+1}$. 
Hence, there are $\cte\in\N_0$ and $l=(l_1,\dots,l_n)\in\N_0^n$ with $l_i\leq q_{i-1}$ for all $2\leq i\leq n$ such that 
\begin{align}
a_{2i-1}&=l_i+Q_i-\tfrac{k+2\tilde Q}{2n+1}, 
&
a_{2i}&=Q_i-\tfrac{k+2\tilde Q}{2n+1}
&
a_{2n+1}&=-\cte -\tfrac{k+2\widetilde Q}{2n+1},
\end{align}
for all $i=1,\dots,n$. 
From $\sum_{i=1}^{2n+1}a_i=0$, we obtain that $\cte=L-k$, so $k\leq L$, and furthermore, $\Lambda=\sum_{i=1}^{2n+1}a_i\ee_i=\Lambda_{q,k,l}$ by \eqref{eq2:Lambda_qkl}.

Reciprocally, one can check that for every $q\in\N_0^{n-1}$, $l\in\N_0^{n}$ and $k\in\Z$ satisfying $l_i\leq q_{i-1}$ for all $i$ and $k\leq L$, the dominant weights $\mu'=\mu_q' \in\PP^+(K')$, $\mu=\mu_{q,k} \in\PP^+(K)$, and $\Lambda=\Lambda_{q,k,l} \in\PP^+(G)$ satisfy the required conditions in a similar way as above.  
Therefore, the eigenvalue $\lambda^{(q,k,l)}:= \lambda^{\pi_{q,k,l} ,\tau_{{q,k}},\tau_q'}(r)$ contributes with multiplicity $\dim V_{\pi_{q,k,l}}$. 

The expression for $\lambda^{(q,k,l)}$ in \eqref{eq2:lambda^(q,k,l)} follows by straightforward calculations replacing in \eqref{eq2:lambda^pitautau'} the values of $\lambda^{\pi_{q,k,l}}$, $\lambda^{\tau_{q,k}}$ and $\lambda^{\tau_{q}'}$ from Lemma~\ref{lem2:escalares} via the coefficients of $\mu_q'$, $\mu_{q,k}$, $\Lambda_{q,k,l}$ given in \eqref{eq2:mu_q'}, \eqref{eq2:mu_qk}, \eqref{eq2:Lambda_qkl}, respectively.
\end{proof}

\subsection{First eigenvalue}
The goal of this subsection is to prove the next result, which establishes an expression for $\lambda_1(\Squaternionic,g_r)$ for any $r\in\R_{>0}^3$. 

\begin{theorem}\label{thm2:lambda1}
The smallest positive eigenvalue of the Laplace-Beltrami operator associated to $(\Squaternionic,g_r)$ for $r=(r_1,r_2,r_3)\in\R_{>0}^3$ is given by 
\begin{equation}\label{eq2:lambda1(M_2,g_r)}
\begin{aligned}
\lambda_1(\Squaternionic,g_{r}) &
= \min \left\{\begin{array}{r@{\,}l}
	\lambda^{(0,0,e_1)}(r) &=r_1^2,
	\\
	\lambda^{(0,1,e_1)}(r)=\lambda^{(0,-1,0)}(r)
	&= r_1^2 \frac{n}{2n+1} + r_2^2\frac{n}{(2n+1)^2},
	\\
	\lambda^{(e_1,0,0)}(r)=\lambda^{(e_{n-1},1,e_n)}(r)
	&= r_1^2\, \tfrac{1}{2n+1}
	+ r_2^2\, \tfrac{1}{n(2n+1)^2}  
	+ r_3^2\, \tfrac{n-1}{n}
	,
	\\
	\lambda^{(e_1+e_{n-1},1,e_n)} (r)
	&= r_1^2\, \tfrac{2}{2n+1}  
	+ r_3^2\, \tfrac{4n-2}{2n+1}
\end{array}\right\}
.
\end{aligned}
\end{equation}
Moreover, $\lambda^{(q,k,l)}(r) >\lambda_1(\Squaternionic,g_r)$ for every $(q,k,l)\neq (0,0,e_1)$, $(0,1,e_1)$, $(0,-1,0)$, $(e_1,0,0)$, $(e_{n-1},1,e_n)$, $(e_1+e_{n-1},1,e_n)$. 
\end{theorem}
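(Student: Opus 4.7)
My strategy is to leverage the linearity of
$\lambda^{(q,k,l)}(r) = \Psi_1(q,k,l)\, r_1^2 + \Psi_2(q,k)\, r_2^2 + \Psi_3(q)\, r_3^2$
given by Theorem~\ref{thm2:spec-technical} in the variables $(r_1^2, r_2^2, r_3^2)$. Non-negativity $\Psi_2 \geq 0$ is immediate, while $\Psi_3(q) \geq 0$ follows from Cauchy--Schwarz $\sum_i Q_i^2 \geq \wQ^2/n$ applied to \eqref{eq2:Psi3}. Under these constraints, $\lambda_1(\Squaternionic, g_r)$ is the value at $r$ of the lower envelope of the family of linear forms above, and the theorem is equivalent to saying that this lower envelope is realized by exactly four linear forms, each attained at one or two of the six listed triples. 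Direct substitution into \eqref{eq2:Psi1}--\eqref{eq2:Psi3} at each candidate produces the claimed formula and simultaneously verifies the two coincidences $\lambda^{(0,1,e_1)} = \lambda^{(0,-1,0)}$ and $\lambda^{(e_1,0,0)} = \lambda^{(e_{n-1},1,e_n)}$. It then remains to show that for every other admissible $(q,k,l)$ the triple $(\Psi_1, \Psi_2, \Psi_3)$ is componentwise $\geq$ one of the four candidate triples, with strict inequality in at least one entry.

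The case $q = 0$ is a warm-up: admissibility forces $l = l_1 e_1$ with $k \leq l_1$ and $\Psi_3 = 0$, and a short argument splits into $l_1 \geq 2$ (forcing $\Psi_1 \geq 1$, domination by $(0,0,e_1)$) and $l_1 \leq 1$ with $k \neq 0$ (where $k \leq l_1$ yields both $\Psi_1 \geq n/(2n+1)$ and $\Psi_2 \geq n/(2n+1)^2$, domination by $(0,1,e_1)$). Equality holds precisely at the three $q = 0$ candidates.

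The case $q \neq 0$ is the main content. I would first establish two sharp lower bounds for $\Psi_3$: namely, $\Psi_3(q) \geq (n-1)/n$ on $\N_0^{n-1}\setminus\{0\}$, with equality iff $q \in \{e_1, e_{n-1}\}$; and, under the additional hypothesis $n \mid \wQ$ (which is necessary for $\Psi_2(q,k)$ to vanish at an integer $k$), $\Psi_3(q) \geq (4n-2)/(2n+1)$, with equality iff $q = e_1 + e_{n-1}$. Both are proved by rewriting \eqref{eq2:Psi3} to isolate the Cauchy--Schwarz defect $\sum Q_i^2 - \wQ^2/n$ and exploiting the monotonicity $Q_1 \geq \dots \geq Q_{n-1} \geq Q_n = 0$. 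With these in hand the argument splits on $\Psi_2$: if $\Psi_2 > 0$, then $(nk - \wQ)^2 \geq 1$ gives $\Psi_2 \geq 1/(n(2n+1)^2)$ and it remains to check $\Psi_1 \geq 1/(2n+1)$ for domination by $(e_1, 0, 0)$; if $\Psi_2 = 0$, then $\Psi_3 \geq (4n-2)/(2n+1)$ and it remains to check $\Psi_1 \geq 2/(2n+1)$ for domination by $(e_1 + e_{n-1}, 1, e_n)$.

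The hard part will be the uniform lower bounds on $\Psi_1(q,k,l)$ across the remaining admissible triples, since \eqref{eq2:Psi1} mixes sign-changing $l$-terms (with coefficient $l_i + 2Q_i - 4i$), a linear $k$-dependence, and cross-terms through $L^2$. I plan to proceed in three nested stages: (i) for fixed $(q, k)$, minimize $\Psi_1$ over the admissibility box $l_i \in \{0, \dots, q_{i-1}\}$ (essentially separable through $L = \sum_i l_i$); (ii) for fixed $q$, minimize over integers $k \leq L$, the optimum lying near $\wQ/n$; and (iii) feed the resulting closed form into the componentwise Pareto comparison by a short case analysis on the support of $q$. The equality analysis throughout will then pinpoint exactly the six listed triples.
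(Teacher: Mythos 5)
Your proposal follows essentially the same route as the paper's proof: the componentwise (Pareto) comparison of $(\Psi_1,\Psi_2,\Psi_3)$ against the candidate triples, the dichotomy on whether $nk=\wQ$ (i.e.\ $\Psi_2=0$), and the sharp lower bounds $\Psi_3(q)\geq\tfrac{n-1}{n}$, $\Psi_3(q)\geq\tfrac{4n-2}{2n+1}$ when $n\mid\wQ$, $\Psi_1\geq\tfrac{1}{2n+1}$ in general and $\Psi_1\geq\tfrac{2}{2n+1}$ when $nk=\wQ$ are precisely the paper's Claims. The only slip is minor and inessential: since $\Psi_1$ is linear and strictly decreasing in $k$, its minimum over $k\leq L$ is attained at $k=L$, not near $\wQ/n$.
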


Before the proof, we give the irreducible representations of $G,K,K'$ contributing to the first eigenvalue $\lambda_1(\Squaternionic,g_r)$ and study their highest multiplicity. 

\begin{remark}\label{rem2:irrepslambda1}
We recall that, for $q\in\N_0^{n-1}$, $k\in\Z$, and $l\in\N_0^n$ satisfying $l_i\leq q_{i-1}$ for all $i=2,\dots,n$ and $k\leq L$,  $\lambda^{(q,k,l)}(r)=\lambda^{\pi_{q,k,l},\tau_{q,k},\tau_{q}'}$, where $\pi_{q,k,l}$ has highest weight $\Lambda_{q,k,l}$, $\tau_{q,k}$ has highest weight $\mu_{q,k}$, and $\tau_q'$ has highest weight $\mu_q'$. 

Table~\ref{table2:highestweights} provides the values of $\Lambda_{q,k,l}$, $\mu_{q,k}$, and $\mu_q'$ for each $(q,k,l)$ involved in \eqref{eq2:lambda1(M_2,g_r)}, which follow from \eqref{eq2:highestweights}.
It is worthwhile to mention that 
$\pi_{0,0,e_1}\simeq \Ad_G$, 
$\pi_{0,1,e_1}$ is the standard representation $\C^{2n+1}$, 
$\pi_{0,-1,0}\simeq \pi_{0,1,e_1}^*$,
$\pi_{e_1,0,0}$ is the $2$-exterior representation $\Lambda^2(\C^{2n+1})$, 
and $\pi_{e_{n-1},1,e_n}\simeq \pi_{e_1,0,0}^*$.
\end{remark}

\begin{remark}\label{rem2:multiplicitylambda1}
Theorem~\ref{thm2:lambda1} implies that the multiplicity of $\lambda_1(\Squaternionic,g_r)$ is given by $\sum_{(q,k,l)} \dim \pi_{q,k,l}$, where the sum is given over $(q,k,l)$ as in the first column in Table~\ref{table2:highestweights} satisfying $\lambda^{(q,k,l)}(r)=\lambda_1(\Squaternionic,g_r)$. 

	One can see that the highest multiplicity of $\lambda_1(\Squaternionic,g_r)$ is given by $4n^4+4n^3+5n^2+2n-1$, realized when $r_2^2=\frac{(2n+1)(2n^2+n+1)}{2}r_1^2$ and $r_3^2=\frac{r_1^2}{2}$. 
\end{remark}

\begin{sidewaystable}
\rule{0mm}{120mm}

\

\renewcommand{\arraystretch}{1,3}

$
\begin{array}{cr@{\,}lccc}
(q,k,l) & \multicolumn{2}{c}{\Lambda_{q,k,l}} & \mu_{q,k} & \mu_q' & \dim \pi_{q,k,l}
\\ \hline \rule{0pt}{14pt}
(0,0,e_1) & 
	\omega_1+\omega_{2n}&=\pr(\ee_1-\ee_{2n+1})&
	0 & 0
	&4n(n+1)
\\
(0,1,e_1) & 
	\omega_1&=\pr(\ee_1)&
	\pr(\ee_{2n+1})&
	0
	&2n+1
\\
(0,-1,0) & 
	\omega_{2n}&=\pr(-\ee_{2n+1})&
	\pr(-\ee_{2n+1})&
	0
	&2n+1
\\
(e_1,0,0) & 
	\omega_2&=\pr(\ee_1+\ee_2)&
	\pr(\ee_1+\ee_2)&
	\pr'(\ee_1+\ee_2)
	&n(2n+1)
\\
(e_{n-1},1,e_n) &
	\omega_{2n-1}&=\pr(-\ee_{2n}-\ee_{2n+1})&
	\pr(-\ee_{2n-1}-\ee_{2n})&
	\pr'(-\ee_{2n-1}-\ee_{2n})
	&n(2n+1)
\\
(e_1+e_{n-1},1,e_n) & 
	\omega_2+\omega_{2n-1}&=\pr\Big(\begin{array}{c}
	\ee_1+\ee_2\\ -\ee_{2n}-\ee_{2n+1}
	\end{array}\Big)&
	\pr\Big(\begin{array}{c}
	\ee_1+\ee_2\\ -\ee_{2n-1}-\ee_{2n}
	\end{array}\Big)&
	\pr'\Big(\begin{array}{c}
	\ee_1+\ee_2\\ -\ee_{2n-1}-\ee_{2n}
	\end{array}\Big)
	&(n^2-1)(2n+1)^2
\\
\end{array}
$
\medskip 

\caption{Highest weights of the irreducible representations of $G,K,K'$ involved in $\lambda_1(\Squaternionic,g_r)$.} \label{table2:highestweights}
\end{sidewaystable}

\renewcommand{\arraystretch}{1,0}

\begin{proof}[Proof of Theorem~\ref{thm2:lambda1}]
We fix $r\in\R_{>0}^3$. 
From Theorem~\ref{thm2:spec-technical}, one has that
\begin{equation*}
\begin{aligned}
\lambda_1(\Squaternionic,g_r)&
=\min\left\{\Gamma_q(r):q\in\N_0^{n-1}\right\}
\end{aligned}
\end{equation*}
where 
\begin{equation*}
\begin{aligned}
\Gamma_q(r)&
=\min 
\left\{\lambda^{(q,k,l)}(r) = \Psi_1(q,k,l)r_1^2+ \Psi_2(q,k) r_2^2 +\Psi_3(q)r_1^3: 
(k,l)\in\kl(q)
\right\}
,\\
\kl(q)&=\{(k,l)\in\Z\times\N_0^{n}: 	
	k\leq L,\; 
	l_i\leq q_{i-1}\;\forall i=2,\dots,n,
	\; (q,k,l)\neq(0,0,0)
\}
.
\end{aligned}
\end{equation*}
The strategy will be to show that $\min\{\Gamma_0(r), \Gamma_{e_1}(r), \Gamma_{e_{n-1}}(r),  \Gamma_{e_1+e_{n-1}}(r)\} < \min\{ \Gamma_{q}(r):q\in\N_0^{n-1}\smallsetminus\{0,e_1,e_{n-1},e_1+e_{n-1}\}\}$ and express $\Gamma_0(r), \Gamma_{e_1}(r), \Gamma_{e_{n-1}}(r),  \Gamma_{e_1+e_{n-1}}(r)$ in terms of the eigenvalues involved in \eqref{eq2:lambda1(M_2,g_r)}. 
To facilitate the proof, we start with several lemmas (as claims) about the functions $\Psi_1,\Psi_2,\Psi_3$ defined in \eqref{eq2:Psi1}, \eqref{eq2:Psi2}, and \eqref{eq2:Psi3} respectively. 
Recall that the values $Q_i,\wQ,L$ were introduced in Notation~\ref{notation2:Juan}.

\begin{claim}\label{claim2:min-k=L}
Given $q\in\N_0^{n-1}$ and $l\in\N_0^n$ satisfying $l_i\leq q_{i-1}$ for all $i=2,\dots,n$, we have that $\Psi_1(q,k,l)>\Psi_1(q,L,l)$ for all $k<L$. 
\end{claim}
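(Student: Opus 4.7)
The plan is to treat $\Psi_1(q,k,l)$ as a function of $k$ alone, with $q$ and $l$ fixed, and observe that it is affine in $k$ with a strictly negative leading coefficient. Indeed, inspecting the formula \eqref{eq2:Psi1},
\[
\Psi_1(q,k,l)=\tfrac{1}{4n+2}\Big(\textstyle\sum_{i=1}^{n}l_i(l_i+2Q_i-4i)+2\wQ+L^2+L(4n+4)-2k(L+n)\Big),
\]
all quantities $Q_i$, $\wQ$, and $L$ depend only on $q$ and $l$, and the variable $k$ appears in a single term, namely $-2k(L+n)$. Hence
\[
\Psi_1(q,k,l)=C(q,l)-\tfrac{L+n}{2n+1}\,k
\]
for some constant $C(q,l)$ independent of $k$.

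Since $L=l_1+\dots+l_n\ge 0$ and $n\ge 2$, the coefficient $-\tfrac{L+n}{2n+1}$ is strictly negative, so $k\mapsto\Psi_1(q,k,l)$ is strictly decreasing on $\Z$. In particular, for any $k<L$ we conclude $\Psi_1(q,k,l)>\Psi_1(q,L,l)$, which is the claim. There is no real obstacle here: the whole argument is a one-line monotonicity observation, and its utility lies in reducing the subsequent minimization of $\Gamma_q(r)$ to the boundary values $k=L$, which is needed in the forthcoming case analysis over $q$.
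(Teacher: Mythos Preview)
Your proof is correct and follows exactly the paper's approach: the paper's proof is the single sentence ``According to \eqref{eq2:Psi1}, $\Psi_1(q,k,l)$ is strictly decreasing on $k$, and the assertion follows,'' and you have simply spelled out this monotonicity observation in more detail.
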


\begin{proof}
\renewcommand{\qedsymbol}{$\blacksquare$}
According to \eqref{eq2:Psi1}, $\Psi_1(q,k,l)$ is strictly decreasing on $k$, and the assertion follows.
\end{proof}

\begin{claim}\label{claim2:Psi1}
$
\Psi_1(q,k,l)> \frac{1}{2n+1} =\Psi_1(e_1,0,0)=\Psi_1(e_{n-1},1,e_n)
$
for all $q\in\N_0^{n-1}$ and $(k,l)\in\kl(q)$ satisfying  $(q,k,l)\notin\{(e_1,0,0),(e_{n-1},1,e_n)\}$.
\end{claim}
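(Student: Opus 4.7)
The first step is to reduce to the case $k=L$. By Claim~\ref{claim2:min-k=L}, $\Psi_1(q,k,l)$ is strictly decreasing in $k$, so $\Psi_1(q,k,l)\ge\Psi_1(q,L,l)$ with strict inequality when $k<L$. Since both exceptional triples have $k=L$, it suffices to show $(2n+1)\Psi_1(q,L,l)>1$ for every admissible $(q,L,l)\notin\{(e_1,0,0),(e_{n-1},1,e_n)\}$ and to verify the equality $(2n+1)\Psi_1=1$ at those two points. Substituting $k=L$ in \eqref{eq2:Psi1} and using $\sum_i l_i^2-L^2=-2\sum_{i<j}l_il_j$ yields the clean identity
\begin{equation*}
(2n+1)\Psi_1(q,L,l) = \wQ + \sum_{i=1}^n l_i\bigl(Q_i+n-2i+2\bigr) - \sum_{i<j}l_il_j,
\end{equation*}
and direct substitution confirms that both exceptional triples produce $\wQ=1$ with the remaining sums vanishing, so the value is indeed $1$.

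The next step is a case analysis on $q$. When $q=0$ the constraint forces $l=l_1e_1$ with $l_1\ge 1$ (since $(0,0,0)\notin\kl(0)$), and the right-hand side reduces to $l_1 n\ge n\ge 2$. When $q=e_1$, the admissible $l$ satisfies $l_1\in\N_0$, $l_2\in\{0,1\}$, and $l_j=0$ for $j\ge 3$; a short computation gives
\begin{equation*}
(2n+1)\Psi_1 = 1 + l_1(n+1) + l_2(n-2) - l_1 l_2,
\end{equation*}
which is strictly greater than $1$ except at $l=0$ (the exception $(e_1,0,0)$) and, when $n=2$, at $l=e_2=e_n$ (which then coincides with $(e_{n-1},1,e_n)$). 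When $q=e_{n-1}$ with $n\ge 3$, the constraints leave $l=l_1e_1+l_ne_n$ with $l_n\in\{0,1\}$, and the analogous computation $(2n+1)\Psi_1=(n-1)+l_1(n+1)+l_n(2-n)-l_1l_n$ isolates $l=e_n$ as the only equality case.

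For the remaining $q$ (satisfying $q\notin\{0,e_1,e_{n-1}\}$) one has $\wQ\ge 2$, and the constraint $l_j\le q_{j-1}$ for $j\ge 2$ yields
\begin{equation*}
\sum_{i<j}l_il_j = \sum_{i=1}^{n-1} l_i\sum_{j>i} l_j \le \sum_{i=1}^{n-1} l_i\sum_{j>i} q_{j-1} = \sum_{i=1}^{n-1} l_i Q_i,
\end{equation*}
hence $(2n+1)\Psi_1\ge\wQ+\sum_i l_i(n-2i+2)$. The only dangerous indices are $i>(n+2)/2$, where $n-2i+2<0$; but there the constraint $l_i\le q_{i-1}$ together with $(i-1)q_{i-1}\le\wQ$ shows these negative contributions are absorbed by $\wQ$ with room to spare, giving $(2n+1)\Psi_1>1$. \emph{The main obstacle} lies precisely in this last case: one must track the joint dependence between $q$ and $l$ imposed by $l_i\le q_{i-1}$ carefully enough that the positive base term $\wQ$ dominates simultaneously the negative quadratic in $l$ and the negative-coefficient linear terms corresponding to $i$ near $n$.
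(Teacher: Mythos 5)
Your reduction to $k=L$, the identity $(2n+1)\Psi_1(q,L,l)=\wQ+\sum_i l_i(Q_i+n-2i+2)-\sum_{i<j}l_il_j$, and the cases $q=e_1$, $q=e_{n-1}$ and $q\notin\{0,e_1,e_{n-1}\}$ are all correct, and the route essentially coincides with the paper's (the paper reaches the same intermediate bound $\wQ+\sum_i l_i(n-2i+2)$ by discarding $l_i(Q_i-\sum_{j>i}l_j)\ge0$, and then regroups it as $\sum_{i\ge2}(i-1)(q_{i-1}-l_i)+\sum_{i\ge2}l_i(n-i+1)$ instead of splitting into cases on $q$). I checked your last case in detail: replacing $l_i$ by $q_{i-1}$ in the negative terms and pairing each $q_j(n-2j)$ with the summand $jq_j$ of $\wQ$ gives $(2n+1)\Psi_1\ge\sum_j q_j\min(j,n-j)\ge 2$ for $q\notin\{0,e_1,e_{n-1}\}$, so that step does close.

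There is, however, one concrete hole: the triples $(0,k,0)$ with $k\le-1$. These belong to $\kl(0)$ (only $(0,0,0)$ is excluded), and one of them, $(0,-1,0)$, even appears among the first-eigenvalue candidates in Theorem~\ref{thm2:lambda1}. Your $q=0$ case asserts that the constraints force $l_1\ge1$, which is false, and your reduction sends $(0,k,0)$ to the inadmissible point $(0,0,0)$, where $(2n+1)\Psi_1=0$ rather than $>1$, so the ``it suffices'' step of your plan does not cover this family. The fix is a one-line computation from \eqref{eq2:Psi1}: $\Psi_1(0,k,0)=\tfrac{-kn}{2n+1}\ge\tfrac{n}{2n+1}>\tfrac{1}{2n+1}$ for $k\le-1$. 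Separately, your assertion that both exceptional triples ``produce $\wQ=1$ with the remaining sums vanishing'' is wrong for $(e_{n-1},1,e_n)$ when $n\ge3$: there $\wQ=n-1$ and the linear term equals $2-n$, so the total is still $1$, but the stated justification is not what happens.
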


\begin{proof}
\renewcommand{\qedsymbol}{$\blacksquare$}
Let $q\in\N_0^{n-1}$ and $(k,l)\in\kl(q)$. 
Claim~\ref{claim2:min-k=L} implies that $\Psi_1(q,k,l)\geq \Psi_1(q,L,l)$. 
By \eqref{eq2:Psi1}, since 
$L^2
=\left(\sum_{i=1}^{n}l_i\right)^2 
= \sum_{i=1}^{n}l_i^2 + 2\sum_{i=1}^n\sum_{j=i+1}^{n} l_il_j
$, we have that
\begin{equation}\label{eq2:Psi1-3ro}
\begin{aligned}
\Psi_1(q,L,l) &
= \frac{1}{4n+2} \left(\sum_{i=1}^{n}l_i(l_i+2Q_i-4i) +2\wQ -L^2+2L(n+2 )\right)  
\\ &
=\frac{1}{2n+1}
\left(
	\widetilde Q
	+\sum_{i=1}^{n} l_i
	\Big(Q_i+n-2(i-1) - \textstyle{\sum\limits_{j=i+1}^n l_j} \Big)
\right)
.
\end{aligned}
\end{equation}
Since $Q_i=\sum_{j=i}^{n-1}q_j\geq\sum_{j=i+1}^n l_j$, we obtain that 

\begin{equation*}
\begin{aligned}
(2n+1)\Psi_1(q,L,l)&
\geq \sum_{i=1}^{n-1} iq_{i}
+\sum_{i=2}^{n} l_i \Big(n-2(i-1) \Big)
\\ &\quad 
=
\sum_{i=2}^{n} (i-1)(q_{i-1}-l_i)
+\sum_{i=2}^{n} l_i \Big(n-(i-1) \Big). 
\end{aligned}
\end{equation*}
Clearly, both terms are non-negative and the equality is attained only when $l_1=0$ and $l_j=q_{j-1}$ for $j=3,\dots, n$. If $l=0$ we have that $\psi_1(q,L,l)\geq \psi_1(e_1,0,0)$. If $l\neq 0$  we have $\psi_1(q,L,l)\geq \psi_1(e_{n-1},1,e_n)$. In both cases one easily sees that the equality is only attained for $(q,k,l)=(e_1,0,0)$ and $(q,k,l)=(e_{n-1},1,e_n)$, respectively.  
\end{proof}

\begin{claim}\label{claim2:Psi3}
$\Psi_3(q)> \frac{n-1}{n}=\Psi_3(e_1)=\Psi_3(e_{n-1})$ for all $q\in\N_0^{n-1}\smallsetminus\{0,e_1,e_{n-1}\}$. 
\end{claim}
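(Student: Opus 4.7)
The plan is to first rewrite $F(q) := (2n+1)\Psi_3(q)$ in a form that exposes a good lower bound. I would expand the definition and apply the swap-of-summation identity $\sum_{i=1}^n i Q_i = \sum_{j=1}^{n-1} \tfrac{j(j+1)}{2}\, q_j$; the two terms $-4\sum_i i Q_i$ and $2(n+1)\wQ$ then combine to $2\sum_j j(n-j)\, q_j$, yielding
\begin{equation*}
F(q) = \sum_{i=1}^n Q_i^2 - \tfrac{\wQ^2}{n} + 2\sum_{j=1}^{n-1} j(n-j)\, q_j.
\end{equation*}

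Next I would record two elementary facts. First, $\sum_{i=1}^n Q_i^2 - \tfrac{\wQ^2}{n} = \sum_{i=1}^n (Q_i - \wQ/n)^2 \geq 0$ (equivalently, Cauchy--Schwarz on the $n$-tuple $(Q_1,\dots,Q_n)$). Second, the concave map $j \mapsto j(n-j)$ on $\{1,\dots,n-1\}$ attains its minimum value $n-1$ exactly at $j=1$ and $j=n-1$. Together these give $F(q) \geq 2(n-1)|q|$, where $|q|:=\sum_j q_j$. When $|q|\geq 2$ this already yields $F(q) \geq 4(n-1) > \tfrac{(2n+1)(n-1)}{n}$, since the difference is $\tfrac{(n-1)(2n-1)}{n} > 0$, and hence $\Psi_3(q) > (n-1)/n$ throughout this range.

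The only remaining case is $|q|=1$, i.e.\ $q=e_j$ for some $j\in\{2,\dots,n-2\}$ (the case list is vacuous for $n=3$, since then $\{1,n-1\}=\{1,2\}$). Here I would substitute $Q_i = \mathbf{1}_{i\leq j}$ and $\wQ=j$ into the displayed formula to obtain $F(e_j) = \tfrac{(2n+1)\,j(n-j)}{n}$, so $\Psi_3(e_j) = j(n-j)/n$; the monotonicity observation above then forces $\Psi_3(e_j) > (n-1)/n$ for every $2 \leq j \leq n-2$. I do not foresee a genuine obstacle: the one calculation requiring any care is the swap-of-summation identity leading to the displayed expression for $F(q)$, and every subsequent inequality is entirely elementary.
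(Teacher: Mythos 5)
Your proof is correct. The algebra checks out: the swap of summations gives $\sum_{i=1}^n iQ_i=\sum_{j=1}^{n-1}\tfrac{j(j+1)}{2}q_j$, and combining with $\wQ=\sum_j jq_j$ indeed yields $(2n+1)\Psi_3(q)=\sum_{i=1}^n(Q_i-\wQ/n)^2+2\sum_{j=1}^{n-1}j(n-j)q_j$; the endpoint minimization of $j(n-j)$ and the resulting bound $(2n+1)\Psi_3(q)\geq 2(n-1)\sum_j q_j$ then dispose of $\sum_j q_j\geq 2$, and the residual case $q=e_j$, $2\leq j\leq n-2$, is handled by the exact value $\Psi_3(e_j)=j(n-j)/n$, which agrees with the paper's computation. (You only remark that the residual list is vacuous for $n=3$; it is also vacuous for $n=2$, but your argument covers that trivially.) Your route differs from the paper's in how the case $\sum_j q_j\geq 2$ is treated: the paper observes that $\Psi_3$ is strictly increasing in each coordinate $q_j$ (so $\Psi_3(q)>\min_j\Psi_3(e_j)$ whenever $q$ is not $0$ or a standard basis vector) and then evaluates $\Psi_3(e_j)=j(1-\tfrac{j}{n})$, whereas you bypass monotonicity entirely via the sum-of-squares decomposition and the linear lower bound $2(n-1)\sum_j q_j$. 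Your version is somewhat more self-contained, since the coordinate-wise monotonicity asserted in the paper itself requires a small verification (it rests on $Q_i$ being non-increasing in $i$, so that $\sum_{i\leq j}Q_i\geq \tfrac{j}{n}\wQ$); on the other hand, the paper's monotonicity is reused in spirit elsewhere (e.g.\ in Claim~\ref{claim2:Psi3-n-divides-wQ}), so its approach amortizes better across the section. Both arguments hinge on the same final computation of $\Psi_3(e_j)$.
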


\begin{proof}
\renewcommand{\qedsymbol}{$\blacksquare$}
It follows from \eqref{eq2:Psi3} that $\Psi_3(q)$ is strictly decreasing on each variable $q_j$, that is, $\Psi_3(q+e_j)>\Psi_3(q)$ for all $q\in\N_0^{n-1}$. 
Thus, $\Psi_3(q)> \min\{\Psi_3(e_j): 1\leq j\leq n-1\}$ for all $q\in\N_0^{n-1}\smallsetminus\{0, e_1,\dots,e_{n-1}\}$. 
By \eqref{eq2:Psi3}, 
\begin{equation*}
\begin{aligned}
\Psi_3(e_j)
&=\frac{(2n+1)j -(2+\tfrac{1}{n})j^2}{2n+1}
= j(1-\tfrac{j}{n})
\end{aligned}
\end{equation*}
and the assertion follows. 
\end{proof}

\begin{claim}\label{claim2:Psi1-wQ=nk}
$\Psi_1(q,k,l)\geq \frac{2}{2n+1}$ for all $q\in\N_0^{n-1}\smallsetminus\{0\}$ and $(k,l)\in\kl(q)$ satisfying $\wQ=nk$.
Moreover, the equality is attained only at $q=e_1+e_{n-1}$ (which gives $\wQ=n$) and $(k,l)=(1,e_n)$. 
\end{claim}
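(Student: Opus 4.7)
The plan is to substitute $k=\wQ/n$ into the formula \eqref{eq2:Psi1} for $\Psi_1(q,k,l)$ and reduce the problem to a short chain of elementary inequalities. Introducing the tail sums $S_i:=\sum_{j\geq i}l_j$ (so $S_1=L$) and using $Lk=L\wQ/n$, a direct algebraic manipulation---analogous to \eqref{eq2:Psi1-3ro} but now with the extra contribution coming from $k<L$---gives
\begin{equation*}
(2n+1)\,\Psi_1\bigl(q,\tfrac{\wQ}{n},l\bigr)
\;=\; \sum_{i=1}^{n} l_i\bigl(Q_i+S_i+2n+2-2i\bigr)\,-\,Lk.
\end{equation*}

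I would then apply two observations: the identity $\sum_i l_iS_i=\tfrac12\bigl(L^2+\sum_i l_i^2\bigr)$, and the lower bound $\sum_i l_iQ_i\geq \sum_{i<j}l_il_j=\tfrac12\bigl(L^2-\sum_il_i^2\bigr)$, which follows from $Q_i\geq S_{i+1}$ (itself a consequence of the admissibility constraint $l_j\leq q_{j-1}$). Adding them, the $\sum l_i^2$ contributions cancel and one obtains
\begin{equation*}
(2n+1)\,\Psi_1\bigl(q,\tfrac{\wQ}{n},l\bigr)\;\geq\; L^2+(2n+2)L-2\sum_{i=1}^{n}il_i-Lk.
\end{equation*}
Using $\sum_i il_i\leq nL$, this simplifies to $L(L+2-k)$. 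Now $(k,l)\in\kl(q)$ forces $k\leq L$, while $q\neq 0$ gives $\wQ\geq 1$ so $k=\wQ/n\geq 1$ (since $k\in\Z$), and consequently $L\geq 1$. Hence $L+2-k\geq 2$ and $(2n+1)\Psi_1\geq 2L\geq 2$, which is the desired bound $\Psi_1\geq 2/(2n+1)$.

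For the equality analysis, running the chain backwards pins down each step: $2L=2$ forces $L=1$, so $l=e_j$ for some $1\leq j\leq n$; $L+2-k=2$ forces $k=L=1$, so $\wQ=n$; the estimate $\sum_i il_i\leq nL$ becomes an equality only when $j=n$, i.e.\ $l=e_n$; the bound on $\sum_i l_iQ_i$ is then automatic since $Q_n=0$. Finally, the admissibility $l_n\leq q_{n-1}$ forces $q_{n-1}\geq 1$, and together with $\wQ=\sum_{j=1}^{n-1}jq_j=n$ this uniquely determines $q=e_1+e_{n-1}$ (which reads $q=2e_1$ when $n=2$). The main subtlety is verifying the clean substitution that yields the simplified formula above; once in hand, the four inequalities combine transparently and the equality case is a direct arithmetic check.
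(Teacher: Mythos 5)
Your proof is correct and follows essentially the same route as the paper's: after substituting $\wQ=nk$, both arguments combine the admissibility bound $Q_i\geq\sum_{j>i}l_j$, the constraint $k\leq L$, the estimate $\sum_i il_i\leq nL$, and $L\geq k\geq 1$ to reach $(2n+1)\Psi_1\geq 2L\geq 2$, with the same equality analysis forcing $L=k=1$, $l=e_n$, $q=e_1+e_{n-1}$. Your reorganization via the tail sums $S_i$ is just a cosmetic regrouping of the same algebra, and all the steps check out.
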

\begin{proof}
\renewcommand{\qedsymbol}{$\blacksquare$}
One can easily check that $q=e_1+e_{n-1}$ gives $\wQ=n$ and $\Psi_1(e_1+e_{n-1},1,e_n)=\frac{2}{2n+1}$. 

Let $q\in\N_0^{n-1} \smallsetminus\{0\}$ and $(k,l)\in\kl(q)$ satisfying $\wQ=nk\neq0$, which implies $k\geq1$. 
By \eqref{eq2:Psi1}, 
\begin{equation}
\begin{aligned}
\Psi_1(q,k,l)
&=\frac{1}{4n+2}\left(\sum_{i=1}^{n}l_i(l_i+2Q_i-4i) -2kL+L^2+L(4n+4)\right)\\
&=\frac{1}{4n+2} \left(
	4L 
	+\sum_{i=1}^{n}l_i(l_i+2Q_i+4(n-i))
	+ 2L(L-k)
	-L^2\right) 
\\
&\geq\frac{1}{4n+2} \left(
	4L 
	+\sum_{i=1}^{n} l_i(l_i+2Q_i+4(n-i))
	-\sum_{i=1}^n l_i^2-2\sum_{i=1}^n\sum_{j=i+1}^nl_il_j
\right) 
\\
&= \frac{1}{4n+2}\left(
	4L
	+2\sum_{i=1}^{n}l_i\sum_{j=i+1}^n(q_{j-1}-l_j)
	+4\sum_{i=1}^{n}l_i(n-i)
\right)\\
&\geq \frac{2L}{2n+1}\geq \frac{2}{2n+1}
,
\end{aligned}
\end{equation}
since $L\geq k\geq 1$, and $q_{j-1}\geq l_j\geq 0$ for all $2\leq j\leq n$. 
Clearly, the equality is attained only if $L=k=1$ and $\sum_{i=1}^n l_i(n-i)=0$, which gives $\wQ=n$ and $l=l_ne_n$.
This implies $q_n\geq l_1\geq1$, and the condition $n=\wQ= \sum_{i=1}^{n-1}iq_i+(n-1)q_n$ forces $q=e_1+e_{n-1}$. 
\end{proof}

\begin{claim}\label{claim2:Psi3-n-divides-wQ}
If $q\in\N_0^{n-1}$ satisfies that $n$ divides $\wQ$, then $\Psi_3(q)\geq \frac{4n-2}{2n+1}$.
Moreover, the equality is only attained at $q=e_1+e_{n-1}$, in which case $\wQ=n$.
\end{claim}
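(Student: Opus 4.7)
The plan is to change variables to $y_r := m - Q_r$ for $r = 1, \dots, n$, where $m := \wQ/n \geq 1$ (the case $q = 0$ is trivially excluded since $\Psi_3(0)=0$). The constraints $Q_1 \geq \dots \geq Q_{n-1} \geq Q_n = 0$ and $\sum_{r=1}^n Q_r = nm$ translate into $y_1 \leq \dots \leq y_n = m$, $\sum_{r=1}^n y_r = 0$, and $y_r \in \Z$. A direct expansion, invoking $\sum y_r = 0$ twice, yields the compact identity
\begin{equation*}
(2n+1)\,\Psi_3(q) \;=\; \sum_{r=1}^n y_r^2 \;+\; 4\sum_{r=1}^n r\,y_r,
\end{equation*}
so the target inequality becomes $\sum_r y_r^2 + 4\sum_r r\,y_r \geq 4n-2$.

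Next, I bound the two sums separately. Since $y_n = m \geq 1$ and the non-decreasing integer sequence $y$ has zero sum, necessarily $y_1 \leq -1$, which forces $\sum_r y_r^2 \geq y_1^2 + y_n^2 \geq 2$. For the linear term, Abel summation gives $\sum_r r\,y_r = -\sum_{r=1}^{n-1} S_r$ with $S_r := y_1 + \dots + y_r$. The crux is the strict integer bound $S_r \leq -1$ for every $r \in \{1, \dots, n-1\}$: monotonicity of $y$ together with $\sum y_r = 0$ gives $S_r \leq 0$, and if $S_r = 0$ for some interior $r$, then both blocks $(y_1, \dots, y_r)$ and $(y_{r+1}, \dots, y_n)$ are monotone and sum to zero; combining $y_r \geq 0 \geq y_{r+1}$ (forced by these sums) with $y_r \leq y_{r+1}$ collapses both blocks to $0$, contradicting $y_n = m \geq 1$. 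Since $S_r \in \Z$, this yields $S_r \leq -1$, and hence $\sum_r r\,y_r \geq n-1$. I expect this step---upgrading $S_r \leq 0$ to $S_r \leq -1$ using integrality and the non-triviality $m \geq 1$---to be the main subtlety of the proof.

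Combining the two bounds gives $(2n+1)\,\Psi_3(q) \geq 2 + 4(n-1) = 4n - 2$, as desired. For equality, $\sum_r y_r^2 = 2$ forces $y = (-1, 0, \dots, 0, 1)$; translating back, $m = 1$ and $Q = (2, 1, \dots, 1, 0)$, i.e., $q = e_1 + e_{n-1}$ with $\wQ = n$, as claimed.
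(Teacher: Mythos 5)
Your proof is correct, and it takes a genuinely different route from the paper's. The paper argues by cases on the support of $q$: when $q=he_j$ has a single nonzero coefficient it computes $(2n+1)\Psi_3(q)$ in closed form and splits further on the value of $c=\wQ/n$ (this is the only place the divisibility hypothesis is used there), and when $q$ has at least two nonzero coefficients it invokes the monotonicity $\Psi_3(q+e_j)>\Psi_3(q)$ to reduce to $q=e_{j_1}+e_{j_2}$ and then minimizes an explicit quadratic in $(j_1,j_2)$. Your substitution $y_r=m-Q_r$ with $m=\wQ/n\in\Z$ instead converts the whole statement into the single inequality $\sum_r y_r^2+4\sum_r r\,y_r\ge 4n-2$ for integer non-decreasing zero-sum sequences with $y_n=m\ge 1$, and I checked that the identity $(2n+1)\Psi_3(q)=\sum_r y_r^2+4\sum_r r\,y_r$ is correct. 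The two bounds are sound: $y_1\le -1$ and $y_n\ge 1$ give $\sum_r y_r^2\ge 2$, and the Abel-summation step $\sum_r r\,y_r=-\sum_{r=1}^{n-1}S_r\ge n-1$ is correctly justified, including the key upgrade from $S_r\le 0$ to $S_r\le -1$ via integrality and the block argument ruling out $S_r=0$. The equality analysis is also right, since tightness forces $\sum_r y_r^2=2$, which pins down $y=(-1,0,\dots,0,1)$ and hence $q=e_1+e_{n-1}$. What your approach buys is a uniform, case-free argument in which the divisibility hypothesis enters transparently (as integrality of $m$); what the paper's approach buys is that its second case needs no divisibility at all, which is reused implicitly via the monotonicity claim elsewhere. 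One cosmetic point you already handled correctly: as literally stated the claim would fail for $q=0$ (where $n\mid\wQ=0$ but $\Psi_3(0)=0$), and both you and the paper tacitly exclude that degenerate case, consistent with how the claim is applied.
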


\begin{proof}
\renewcommand{\qedsymbol}{$\blacksquare$}
We divide the proof into two cases. 
We first assume that $q$ has only one non-zero coefficient, that is, $q=he_j$ for some $1\leq j\leq n-1$ and $h\in\N$. 
We have that $Q_i=h$ for $i\leq j$, $Q_i=0$ for $i>j$, and $\wQ=hj$. 
By hypothesis, $\wQ$ is divisible by $n$, say $jh=\wQ=cn$. 
From \eqref{eq2:Psi3}, it follows that 
\begin{equation*}
\begin{aligned}
(2n+1)\Psi_3(q)&
=\sum_{i=1}^j \frac{cn}{j} \left(\frac{cn}{j}-4i\right)-c^2n+2(n+1)cn
=cn(n-j)(\tfrac{c}{j}+2).
\end{aligned}
\end{equation*}
If $c\geq2$, then $(2n+1)\Psi_3(q)>4n>4n-2$.
If $c=1$, then $j$ divides $n$, so $j\leq n/2$, which gives  $(2n+1)\Psi_3(q)> n^2>4n-2$ for any $n\geq4$ and, for $n\in\{2,3\}$, one has $j=1$, thus $(2n+1)\Psi_3(q)=3n(n-1)\geq 4n-2$. 
The equality is attained only if $n=2$ and $h=2$, which gives $q=2e_1=e_1+e_{n-1}$ as required. 

We now assume that $q$ has at least two non-zero coordinates, say ${j_1}<{j_2}$. 
It follows from \eqref{eq2:Psi3} that $\Psi_3(q)$ is strictly decreasing on each coefficient of $q$, thus 
\begin{equation*}
\begin{aligned}
\Psi_3(q)&\geq \Psi_3(e_{j_1}+e_{j_2}) =\frac{-j_1^2(2+\tfrac{1}{n})+j_1(2n+3)-j_2^2(2+\tfrac{1}{n})+j_2(2n+1)-\frac{2j_1j_2}{n}}{2n+1}
.
\end{aligned}
\end{equation*}
One can check that the minimum is attained only at $(j_1,j_2)=(1,n-1)$, which gives $\Psi_3(e_{1}+e_{n-1})=\frac{4n-2}{2n+1}$, and the proof is complete.
\end{proof}

We are now ready to proceed with the strategy mentioned at the beginning of the proof. 
The next four claims determine the values of $\Gamma_0(r)$, $\Gamma_{e_1}(r)$, $\Gamma_{e_{n-1}}(r)$, $\Gamma_{e_1+e_{n-1}}(r)$.

\begin{claim}\label{claim2:Gamma_0}
$\Gamma_0(r)=\min\{\lambda^{(0,0,e_1)}(r), \lambda^{(0,1,e_1)}(r)=\lambda^{(0,-1,0)}(r) \} <\lambda^{(0,k,l)}(r)$ for all $(k,l)\in\kl(0)\smallsetminus\{(0,e_1),(1,e_1),(-1,0)\}$.
\end{claim}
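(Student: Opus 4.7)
The first step is to specialize to $q = 0$: then $Q_i = 0$ for all $i$, $\wQ = 0$, and the constraint $l_i \leq q_{i-1} = 0$ for $i \geq 2$ forces $l = m\,e_1$ with $m = L \in \N_0$. Thus $\kl(0)$ is parametrized by pairs $(k, m) \in \Z \times \N_0$ with $k \leq m$ and $(k, m) \neq (0, 0)$. Specializing \eqref{eq2:Psi1}--\eqref{eq2:Psi3} yields $\Psi_3(0) = 0$, $\Psi_2(0, k) = \frac{nk^2}{(2n+1)^2}$, and $\Psi_1(0, k, m\,e_1) = \frac{m(m + 2n) - k(m + n)}{2n+1}$, so
\begin{equation*}
\lambda^{(0, k, me_1)}(r) = A(k, m)\, r_1^2 + B(k)\, r_2^2,
\end{equation*}
with $A(k, m) := \Psi_1(0, k, m\,e_1)$ and $B(k) := \Psi_2(0, k)$. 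Direct substitution verifies the three distinguished values in the statement together with the identity $\lambda^{(0, 1, e_1)}(r) = \lambda^{(0, -1, 0)}(r)$.

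The plan is then, for every admissible $(k, m)$ outside the three excluded triples and every $r \in \R_{>0}^3$, to establish one of two coefficient-wise dominations:
\begin{itemize}
\item[(i)] $A(k, m) > 1$ and $B(k) \geq 0$, which forces $\lambda^{(0, k, me_1)}(r) > \lambda^{(0, 0, e_1)}(r) = r_1^2$; or
\item[(ii)] $A(k, m) \geq \tfrac{n}{2n+1}$ and $B(k) \geq \tfrac{n}{(2n+1)^2}$ with at least one strict, which forces $\lambda^{(0, k, me_1)}(r) > \lambda^{(0, 1, e_1)}(r)$.
\end{itemize}
Since $A(\cdot, m)$ is linear and strictly decreasing in $k$, and $B(\cdot)$ is an even quadratic in $k$, this reduces to checking boundary values in a short case analysis.

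The four cases are: (a) $k \leq -1$ and $m \geq 1$, where monotonicity of $A$ in both variables forces $A(k, m) \geq A(-1, 1) = \frac{3n+2}{2n+1} > 1$ while $B(k) > 0$, giving (i); (b) $m = 0$ and $k \leq -2$ (the excluded triple $(-1, 0)$ being avoided), where $A(k, 0) = \frac{-kn}{2n+1} \geq \frac{2n}{2n+1}$ and $B(k) \geq \frac{4n}{(2n+1)^2}$ strictly exceed the (ii)-thresholds; (c) $0 \leq k \leq m - 1$ and $m \geq 2$, where linearity pushes $A$ down to its minimum $A(m - 1, m) = \frac{mn + m + n}{2n+1} \geq \frac{3n+2}{2n+1} > 1$, giving (i); (d) $k = m \geq 2$, where $A(m, m) = \frac{mn}{2n+1} \geq \frac{2n}{2n+1}$ and $B(m) = \frac{nm^2}{(2n+1)^2} \geq \frac{4n}{(2n+1)^2}$ strictly exceed the (ii)-thresholds. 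The remaining admissible combinations with $m \in \{0, 1\}$ and $0 \leq k \leq m$ are precisely $(0, 0)$ (not in $\kl(0)$) and the two excluded triples $(0, 1)$ and $(1, 1)$, so the case list is exhaustive.

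The argument is essentially monotonicity of $A$ in $k$ combined with bookkeeping to ensure every admissible pair outside the three excluded triples is covered; there is no substantive obstacle.
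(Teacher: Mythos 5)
Your proof is correct and follows essentially the same route as the paper: specialize to $q=0$, write $\lambda^{(0,k,me_1)}(r)$ explicitly as $A(k,m)\,r_1^2+B(k)\,r_2^2$, and eliminate all other $(k,m)$ by coefficient-wise comparison using monotonicity of $A$ in $k$ (the paper packages this monotonicity as its Claim on $\Psi_1(q,k,l)>\Psi_1(q,L,l)$). If anything, your case analysis is slightly more exhaustive than the paper's, which treats the cases $m\ge 2$ explicitly and leaves the residual pairs with $m\in\{0,1\}$ implicit.
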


\begin{proof}
\renewcommand{\qedsymbol}{$\blacksquare$}
We fix $q=0$. Thus $Q_i=0$ for all $i$, $\wQ=0$, $\kl(0)=\{(k,l_1e_1): k\in\Z,l_1\in\N_0, k\leq l_1\}$, and $\Psi_3(q)=0$.
Now, for $(k,l_1e_1)\in\kl(0)$, \eqref{eq2:lambda^(q,k,l)} gives 
\begin{equation*}
\begin{aligned}
\lambda^{(0,k,l_1e_1)}(r) &
= \frac{l_1(l_1+2n-k) -kn}{2n+1}  \, r_1^2
+ \frac{nk^2}{(2n+1)^2}\, r_2^2 
. 
\end{aligned}
\end{equation*}
Clearly, $\lambda^{(0,0,e_1)}(r) = r_1^2 <\lambda^{(0,0,l_1e_1)}(r)$ for all $l_1\geq2$. 
Furthermore, Claim~\ref{claim2:min-k=L} implies $\lambda^{(0,1,e_1)}(r) = \lambda^{(0,-1,0)}(r)= \frac{n}{2n+1}  \, r_1^2
+ \frac{n}{(2n+1)^2}\, r_2^2  <\lambda^{(0,l_1,l_1e_1)}(r)\leq \lambda^{(0,k,l_1e_1)}(r)$ for all $(k,l_1e_1)\in\kl(0)$ satisfying $k\neq 0$ and $l_1\geq2$, which completes the proof. 
\end{proof}

\begin{claim}\label{claim2:Gamma_e1}
$\Gamma_{e_1}(r)=\lambda^{(e_1,0,0)}(r) <\lambda^{(e_1,k,l)}(r)$ for all $(k,l)\in\kl(e_1)\smallsetminus\{(0,0)\}$.
\end{claim}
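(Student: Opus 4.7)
The plan is to reduce the comparison of $\lambda^{(e_1, k, l)}(r)$ and $\lambda^{(e_1, 0, 0)}(r)$ to a coefficient-by-coefficient comparison of $\Psi_1$ and $\Psi_2$, since $\Psi_3(q)$ depends only on $q$ and thus contributes identically to both sides. Concretely, the claim reduces to showing
\begin{equation*}
\Psi_1(e_1, k, l) \geq \Psi_1(e_1, 0, 0) = \tfrac{1}{2n+1}
\quad\text{and}\quad
\Psi_2(e_1, k) \geq \Psi_2(e_1, 0) = \tfrac{1}{n(2n+1)^2}
\end{equation*}
for every $(k, l) \in \kl(e_1) \smallsetminus \{(0,0)\}$, with at least one of the two inequalities strict. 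The explicit values on the right come from plugging $q = e_1$ (so $\wQ = 1$) and $k = l = 0$ into \eqref{eq2:Psi1} and \eqref{eq2:Psi2}.

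The strict inequality on the $\Psi_1$ side is already available via Claim~\ref{claim2:Psi1}, which gives $\Psi_1(q, k, l) > \tfrac{1}{2n+1}$ except at the pairs $(e_1, 0, 0)$ and $(e_{n-1}, 1, e_n)$. For $n \geq 3$ one has $e_1 \neq e_{n-1}$, so specializing to $q = e_1$ excludes only the single triple $(e_1, 0, 0)$, giving $\Psi_1(e_1, k, l) > \Psi_1(e_1, 0, 0)$ for all $(k, l) \in \kl(e_1) \smallsetminus\{(0,0)\}$. For the $\Psi_2$ side, substituting $\wQ = 1$ into \eqref{eq2:Psi2} yields $\Psi_2(e_1, k) = \tfrac{(nk-1)^2}{n(2n+1)^2}$, which over $k \in \Z$ is clearly minimized at $k = 0$ because $|nk - 1| \geq n - 1 \geq 1$ for every nonzero integer $k$ and every $n \geq 2$. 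Combining the two inequalities gives $\lambda^{(e_1,k,l)}(r) > \lambda^{(e_1,0,0)}(r)$, whence $\Gamma_{e_1}(r) = \lambda^{(e_1, 0, 0)}(r)$.

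The main (minor) obstacle is the edge case $n = 2$, where $e_1 = e_{n-1}$ and thus the exceptional triple $(e_{n-1}, 1, e_n)$ from Claim~\ref{claim2:Psi1} becomes $(e_1, 1, e_2) \in \{e_1\} \times \kl(e_1)$. A direct check shows that in that case $\Psi_1(e_1, 1, e_2) = \tfrac{1}{2n+1}$ and $\Psi_2(e_1, 1) = \tfrac{1}{n(2n+1)^2}$ coincide with the values at $(0,0)$, so one obtains $\lambda^{(e_1, 1, e_2)}(r) = \lambda^{(e_1, 0, 0)}(r)$ rather than a strict inequality. This does not affect the identity $\Gamma_{e_1}(r) = \lambda^{(e_1, 0, 0)}(r)$ (the minimum is still attained at $(0,0)$), but the strict inequality of the claim must be read modulo this single coincident triple when $n = 2$, consistent with the irreducible representations listed in Theorem~\ref{thm2:lambda1}.
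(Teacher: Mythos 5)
Your proof is correct and follows essentially the same route as the paper's: fix $q=e_1$ so that $\Psi_3$ contributes identically, and reduce to the termwise comparisons $\Psi_1(e_1,k,l)>\Psi_1(e_1,0,0)$ (via Claim~\ref{claim2:Psi1}) and $\Psi_2(e_1,k)\geq\Psi_2(e_1,0)$ (directly from \eqref{eq2:Psi2}, using $|nk-1|\geq n-1$ for $k\neq0$). Your $n=2$ observation is moreover a genuine catch that the paper's own proof overlooks: for $n=2$ one has $e_{n-1}=e_1$ and $(1,e_2)\in\kl(e_1)$, so the exceptional triple $(e_{n-1},1,e_n)$ of Claim~\ref{claim2:Psi1} falls inside the range of the present claim and gives $\lambda^{(e_1,1,e_2)}(r)=\lambda^{(e_1,0,0)}(r)$; the strict inequality in the statement thus fails at this single pair, although the identity $\Gamma_{e_1}(r)=\lambda^{(e_1,0,0)}(r)$ and the eigenvalue and multiplicity bookkeeping in Theorem~\ref{thm2:lambda1} are unaffected, since $\lambda^{(e_{n-1},1,e_n)}(r)$ is already recorded there as equal to $\lambda^{(e_1,0,0)}(r)$.
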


\begin{proof}
\renewcommand{\qedsymbol}{$\blacksquare$}
We fix $q=e_1$. 
It follows $Q_1=1$, $Q_i=0$ for all $i\geq2$, $\wQ=1$, $\kl(e_1)=\{(k,l_1e_1+l_2e_2): k\in\Z,l_1,l_2\in\N_0, l_2\leq 1, k\leq l_1+l_2\}$, and $\Psi_3(e_1)=\tfrac{n-1}{n}$.

According to \eqref{eq2:lambda^(q,k,l)}, it is sufficient to show that $\Psi_2(e_1,k)\geq \Psi_2(e_1,0)$ and $\Psi_1(e_1,k,l)>\Psi_1(e_1,0,0)$ for all $(k,l)\in\kl(e_1)\smallsetminus\{(0,0)\}$.
The first assertion follows from \eqref{eq2:Psi2}, and the second one from Claim~\ref{claim2:Psi1}. 
\end{proof}

\begin{claim}\label{claim2:Gamma_e_(n-1)}
$\Gamma_{e_{n-1}}(r)=\lambda^{(e_{n-1},1,e_n)}(r) <\lambda^{(e_{n-1},k,l)}(r)$ for all $(k,l)\in\kl(e_{n-1})\smallsetminus\{(1,e_n)\}$.
\end{claim}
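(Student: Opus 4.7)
The plan is to mirror the templates of Claims~\ref{claim2:Gamma_0} and \ref{claim2:Gamma_e1}: fix $q = e_{n-1}$ and, using the linear decomposition \eqref{eq2:lambda^(q,k,l)}, separately minimize the $\Psi_1$ and $\Psi_2$ contributions, since $\Psi_3(e_{n-1}) = \tfrac{n-1}{n}$ is independent of $(k,l)$ and therefore plays no role in the comparison. First I would specialize the parameters: $q = e_{n-1}$ yields $Q_1 = \dots = Q_{n-1} = 1$, $Q_n = 0$, $\wQ = n-1$, and the admissibility conditions $l_i \le q_{i-1}$ collapse to $l_2 = \dots = l_{n-1} = 0$ and $l_n \in \{0,1\}$, leaving $l_1 \in \N_0$ and $k \le L := l_1 + l_n$ as the only free parameters.

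Next I would treat the $\Psi_1$ contribution by direct appeal to Claim~\ref{claim2:Psi1}, which gives
\begin{equation*}
\Psi_1(e_{n-1}, k, l) \geq \tfrac{1}{2n+1} = \Psi_1(e_{n-1}, 1, e_n),
\end{equation*}
with equality only at $(k, l) = (1, e_n)$; the other equality case $(q,k,l) = (e_1, 0, 0)$ in that claim is excluded here because $q = e_{n-1} \ne e_1$ for $n \ge 3$. For the $\Psi_2$ contribution, formula \eqref{eq2:Psi2} gives $\Psi_2(e_{n-1}, k) = \tfrac{(nk-(n-1))^2}{n(2n+1)^2}$, and since $(n-1)/n \in (\tfrac{1}{2}, 1)$ for $n \ge 3$, the unique integer minimizer of $(nk-(n-1))^2$ is $k = 1$, so $\Psi_2(e_{n-1}, k) > \Psi_2(e_{n-1}, 1)$ whenever $k \ne 1$.

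Combining the two bounds finishes the argument: any $(k,l) \in \kl(e_{n-1}) \smallsetminus \{(1, e_n)\}$ falls into one of two cases. If $k \ne 1$, then $\Psi_2$ is strictly larger than $\Psi_2(e_{n-1}, 1)$ and $\Psi_1$ is at least as large, so the $r_2^2 > 0$ term forces $\lambda^{(e_{n-1}, k, l)}(r) > \lambda^{(e_{n-1}, 1, e_n)}(r)$. If $k = 1$ and $l \ne e_n$, then $\Psi_2$ coincides with its minimum but $\Psi_1$ is strictly larger by Claim~\ref{claim2:Psi1}, so the $r_1^2 > 0$ term gives the same strict inequality. The low-dimensional case $n = 2$ collapses to Claim~\ref{claim2:Gamma_e1} under the identification $e_{n-1} = e_1$, and a direct substitution shows $\lambda^{(e_1, 0, 0)}(r) = \lambda^{(e_1, 1, e_2)}(r)$, so the two claims are consistent. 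I do not foresee any substantive obstacle: the required bounds on $\Psi_1$ and $\Psi_2$ are either already proved (Claim~\ref{claim2:Psi1}) or reduce to an elementary one-variable integer optimization, and the rest is bookkeeping.
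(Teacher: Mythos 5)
Your proof is correct and follows exactly the route the paper intends: the paper's own ``proof'' of this claim is just the remark that it is very similar to that of Claim~\ref{claim2:Gamma_e1}, and your argument is the faithful transcription of that template to $q=e_{n-1}$ (specializing $Q_i$, $\wQ$ and $\kl(e_{n-1})$, invoking Claim~\ref{claim2:Psi1} for the strict $\Psi_1$ bound, and checking that $k=1$ minimizes $\Psi_2(e_{n-1},\cdot)=\tfrac{(nk-(n-1))^2}{n(2n+1)^2}$ over $\Z$). Your aside on $n=2$ is also on point: there $e_{n-1}=e_1$ and $(k,l)=(0,0)$ attains equality, so the strict inequality as literally stated holds only for $n\geq 3$ --- a degeneracy already present in the paper's formulation (the two dual representations $\pi_{\omega_2}$ and $\pi_{\omega_{2n-1}}$ always share the eigenvalue), not a defect of your argument.
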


\begin{proof}
\renewcommand{\qedsymbol}{$\blacksquare$}
The proof is left to the reader for being very similar to the one for Claim~\ref{claim2:Gamma_e1}. 
\end{proof}

\begin{claim}\label{claim2:Gamma_e1+e_(n-1)}
$\Gamma_{e_1+e_{n-1}}(r)=\lambda^{(e_1+e_{n-1},1,e_n)}(r) <\lambda^{(e_1+e_{n-1},k,l)}(r)$ for all $(k,l)\in\kl(e_1+e_{n-1})\smallsetminus\{(1,e_n)\}$.
\end{claim}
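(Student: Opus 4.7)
The plan is to follow the pattern of Claims~\ref{claim2:Gamma_e1} and \ref{claim2:Gamma_e_(n-1)}. Fix $q=e_1+e_{n-1}$. Since $\wQ=n$, one has $\Psi_3(q)=(4n-2)/(2n+1)$ by Claim~\ref{claim2:Psi3-n-divides-wQ}, and $\Psi_2(q,k)=n(k-1)^2/(2n+1)^2$. Because $\Psi_3$ depends only on $q$, we are reduced to showing that for every $(k,l)\in\kl(q)\smallsetminus\{(1,e_n)\}$,
\begin{equation*}
\bigl(\Psi_1(q,k,l)-\tfrac{2}{2n+1}\bigr)r_1^2 + \Psi_2(q,k)\,r_2^2 > 0 \qquad\text{for all } r_1,r_2>0.
\end{equation*}

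I would split the analysis according to whether $k=1$. In the case $k=1$, $\Psi_2(q,k)$ vanishes and, since $\wQ=nk$, Claim~\ref{claim2:Psi1-wQ=nk} yields $\Psi_1(q,1,l)>2/(2n+1)$ strictly for every admissible $l\neq e_n$, closing this sub-case. In the case $k\neq 1$, $\Psi_2(q,k)>0$, so it suffices to prove $\Psi_1(q,k,l)\geq 2/(2n+1)$; the positive $r_2^2$-contribution then produces the required strict inequality for all $r_1,r_2>0$. By Claim~\ref{claim2:min-k=L} we may replace $k$ by $L$ (the minimum of $\Psi_1$ in $k$ for fixed $l$), so the task reduces to establishing $\Psi_1(q,L,l)\geq 2/(2n+1)$ for every valid $l$.

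For this last step I would substitute $q=e_1+e_{n-1}$ (so that $Q_1=2$, $Q_2=\dots=Q_{n-1}=1$, $Q_n=0$) into the expression \eqref{eq2:Psi1-3ro} for $\Psi_1(q,L,l)$. When $n\geq 3$ the constraints $l_i\leq q_{i-1}$ force $l_2,l_n\in\{0,1\}$ and $l_3=\dots=l_{n-1}=0$, while $l_1\in\N_0$ is free. A short rearrangement then produces the identity
\begin{equation*}
(2n+1)\Psi_1(q,L,l)-2 = (n-2)(1-l_n) + l_1(n+2-l_2-l_n) + l_2(n-1-l_n),
\end{equation*}
each summand of which is manifestly non-negative on this admissible range, with simultaneous vanishing only at $(l_1,l_2,l_n)=(0,0,1)$, i.e.\ $l=e_n$. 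The case $n=2$ (where $q=2e_1$ and $l=(l_1,l_2)$ with $l_2\leq 2$) falls outside this analysis and must be treated separately; the same substitution now collapses to $(2n+1)\Psi_1(q,L,l)-2 = l_1(4-l_2)\geq 0$, which is non-negative throughout the admissible range.

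I expect the main obstacle to be spotting the clean non-negative-sum rearrangement above: once that algebraic identity is written down, each summand is visibly non-negative by inspection. Every remaining ingredient (the reduction $k\mapsto L$, the values of $\Psi_2(q,k)$ and $\Psi_3(q)$, and the $k=1$ sub-case) has already been prepared by the earlier claims of this section.
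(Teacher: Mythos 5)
Your proof is correct and follows essentially the same route as the paper's: the same reduction via $\Psi_2$ and $\Psi_3$, the same appeals to Claims~\ref{claim2:min-k=L} and \ref{claim2:Psi1-wQ=nk}, and the same rearrangement of \eqref{eq2:Psi1-3ro} into a sum of manifestly non-negative terms (the paper splits on $L=0$, $L=1$, $L\geq 2$ where you split on $k=1$ versus $k\neq 1$, but the ingredients coincide). If anything, your organization is slightly more robust at the boundary case $n=2$, $l=0$, $k=0$, where $\Psi_1=\tfrac{n}{2n+1}$ equals $\tfrac{2}{2n+1}$ and strictness must come from $\Psi_2>0$ rather than from $\Psi_1$ alone.
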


\begin{proof}
\renewcommand{\qedsymbol}{$\blacksquare$}
We fix $q=e_1+e_{n-1}$. 
It follows that $Q_1=2$, $Q_i=1$ for all $2\leq i\leq n-1$, $\wQ=n$, $\kl(e_1+e_{n-1})=\{(k,l_1e_1+l_2e_2+l_ne_n): k\in\Z, l_1,l_2,l_n\in\N_0, l_2\leq 1, l_n\leq 1, k\leq l_1+l_2+l_n\}$, and $\Psi_3(e_1+e_2)=\tfrac{4n-2}{2n+1}$.

It follows immediately from \eqref{eq2:Psi2} that $\Psi_2(e_1+e_{n-1},k)\geq \Psi_2(e_1+e_{n-1},1)$ for all $k\in\Z$. 
According to \eqref{eq2:lambda^(q,k,l)}, it suffices to show that  $\Psi_1(e_1+e_{n-1},k,l)>\Psi_1(e_1+e_{n-1},1,e_n)$ for all $(k,l)\in\kl(e_1+e_{n-1})\smallsetminus\{(1,e_n)\}$.
One has $\Psi_1(e_1+e_{n-1},1,e_n)=\tfrac{2}{2n+1}$ by Claim~\ref{claim2:Psi1-wQ=nk}.

Let $(k,l)\in\kl(e_1+e_{n-1})$. 
We divide the proof in cases according to the value of $L$.
If $L=1$, then $\Psi_1(e_1+e_{n-1},k,l)\geq \Psi_1(e_1+e_{n-1},1,l)$ by Claim~\ref{claim2:min-k=L}, and furthermore, $\Psi_1(e_1+e_{n-1},1,l)> \Psi_1(e_1+e_{n-1},1,e_n)$ by Claim~\ref{claim2:Psi1-wQ=nk} if $l\neq e_n$. 
If $L=0$, then $l=0$, and Claim~\ref{claim2:min-k=L} implies $\Psi_1(e_1+e_{n-1},k,0)\geq \Psi_1(e_1+e_{n-1},0,0)  =\frac{n}{2n+1}>\tfrac{2}{2n+1}=\Psi_1(e_1+e_{n-1},1,e_n)$. 

Suppose $L\geq2$. 
Write $l=l_1e_1+l_2e_2+l_ne_n$, thus $l_2,l_n\leq 1$. 
Now, \eqref{eq2:Psi1-3ro} gives 
\begin{equation*}
\begin{aligned}
\Psi_1(q,L,l) &
=\frac{l_1(2+n - l_2-l_n)
	+l_2(n-1 - l_n )
	+n(1-l_n)+2l_n}{2n+1}
\\ &
\geq \frac{l_1n 
	+l_2(n-2 )
	+n(1-l_n)+2l_n}{2n+1}
>\frac{2}{2n+1},
\end{aligned}
\end{equation*}
and the proof is complete. 
\end{proof}

Claims~\ref{claim2:Gamma_0}--\ref{claim2:Gamma_e1+e_(n-1)} implies that the minimum among $\Gamma_0(r)$, $\Gamma_{e_1}(r)$, $\Gamma_{e_{n-1}}(r)$, and $\Gamma_{e_1+e_{n-1}}(r)$, coincides with $\lambda_1(\Squaternionic,g_r)$ as stated in \eqref{eq2:lambda1(M_2,g_r)}. 
What is left is to show that, for any $q\in\N_0^{n-1}\smallsetminus\{0,e_1,e_{n-1},e_1+e_{n-1}\}$ and $(k,l)\in\kl(q)$, $\lambda^{(q,k,l)}(r)$ is strictly greater to any of the above numbers. 

Let $q\in\N_0^{n-1}\smallsetminus\{0,e_1,e_{n-1},e_1+e_{n-1}\}$ and $(k,l)\in\kl(q)$. 
If $\wQ=kn$, then $\Psi_3(q)>\Psi_3(e_1+e_{n-1})$ by Claim~\ref{claim2:Psi3-n-divides-wQ} and $\Psi_1(q,k,l) \geq \Psi_1(e_1+e_{n-1},1,e_n)$ by Claim~\ref{claim2:Psi1-wQ=nk}, consequently $\lambda^{(q,k,l)}(r)> \lambda^{(e_1+e_{n-1},1,e_n)}(r)=\Gamma_{e_1+e_{n-1}}(r)$ by \eqref{eq2:lambda^(q,k,l)} since $\Psi_2(e_1+e_{n-1},1)=0$. 

We now assume $\wQ\neq nk$. 
We clearly have $\Psi_2(q,k)\geq \tfrac{1}{n(2n+1)^2}=\Psi_2(e_1,0)$ from \eqref{eq2:Psi2}. 
Furthermore, $\Psi_3(q)>\tfrac{n-1}{n}=\Psi_3(e_1)$ by Claim~\ref{claim2:Psi3} and $\Psi_1(q,k,l)\geq \tfrac{1}{2n+1}=\Psi_1(e_1,0,0)$ by Claim~\ref{claim2:Psi1}.
Hence, $\lambda^{(q,k,l)}(r)> \lambda^{(e_1,0,)}(r) =\Gamma_{e_1}(r)$ by \eqref{eq2:lambda^(q,k,l)}, and the proof is complete. 
\end{proof}

\begin{remark}\label{rem2:symmetric}
The symmetric metrics among the $G$-invariant metrics on $\Squaternionic$ are
\begin{equation}
\left\{\bar g_t:= g_{(\sqrt{2}t,\sqrt{(2n+1)/(n+1)}t,t)}:t>0\right\}
.
\end{equation}  
Theorem~\ref{thm2:lambda1} implies
\begin{equation*}
\lambda_1(\Squaternionic, \bar g_t) 
=\frac{n(2n+3)}{2(n+1)(2n+1)}t^2
.
\end{equation*}

The standard symmetric space $\big(\frac{\SU(2n+2)}{\Sp(n+1)}, g_{\kil_{\su(2n+2)}}\big)$ is isometric to $(\Squaternionic, \bar g_t)$ for $t=\sqrt{(2n+1)/(n+1)}$, thus $\lambda_1\big(\frac{\SU(2n+2)}{\Sp(n+1)}, g_{\kil_{\su(2n+2)}}\big)=\frac{n(2n+3)}{2(n+1)^2}$, which coincides with Urakawa's computation (see \cite[Appendix]{Urakawa86}). 

Kerr~\cite{Kerr96} proved that the symmetric metrics are the only Einstein metrics among $G$-invariante metrics on $\Squaternionic$. 
\end{remark}

\subsection{Spectral uniqueness}

The explicit expression for $\lambda_1(\Squaternionic,g_r)$ in Theorem~\ref{thm2:lambda1} allows us to show that the symmetric metrics on $\Squaternionic$ are spectrally unique among the homogeneous metrics $\{g_r: r\in\R_{>0}^3\}$ on $\Squaternionic$.

\begin{proof}[Proof of Theorem~\ref{thm0:spectraluniquenessSquaternionic}]
Suppose that
\begin{equation*}
\Spec(\Squaternionic,g_{r})=\Spec(\Squaternionic,\bar g_t)
\end{equation*} 
for some $r=(r_1,r_2,r_3)\in\R_{>0}^3$ and $t>0$ (see Remark~\ref{rem2:symmetric} for the definition of the symmetric metric $\bar g_t$).
Without loosing generality, we can assume that $t=\sqrt{n+1}$, that is, $\bar g_t=g_{\bar r}$ with $\bar r=(\sqrt{2n+2},\sqrt{2n+1},\sqrt{n+1})$. 
The goal is to show that $r=\bar r$. 

Theorem~\ref{thm2:lambda1} gives $\lambda_1(\Squaternionic,g_{\bar r}) = \frac{n(2n+3)}{2n+1}$ with multiplicity equal to $\dim \pi_{\omega_1}+ \dim \pi_{\omega_{2n}}+ \dim \pi_{\omega_2}+ \dim \pi_{\omega_{2n-1}} = 2\dim \pi_{\omega_1}+ 2\dim \pi_{\omega_2} = 2(2n+1)+2(2n+1)n = (2n+1)(2n+2)$ by Remark~\ref{rem2:multiplicitylambda1}. 

According to Remarks~\ref{rem2:irrepslambda1} and \ref{rem2:multiplicitylambda1}, the multiplicity of $\lambda_1(\Squaternionic,g_r)$ is necessarily of the form 
\begin{equation*}
\begin{aligned}
(2n+1)(2n+2)&
=2a\dim\pi_{\ee_1} +2b\dim\pi_{\omega_2} +c\dim\pi_{\ee_1-\ee_{2n+1}} + d\dim \pi_{\ee_1+\ee_2-\ee_{2n}-\ee_{2n+1}}
\\ &
=2a(2n+1)  +2b(2n+1)n +c((2n+1)^2-1) + d(n^2-1)(2n+1)^2
\end{aligned}
\end{equation*}
for some $a,b,c,d\in\{0,1\}$.
One can easily see that the only solution is $(a,b,c,d)=(1,1,0,0)$. 
This tells us that $\lambda_1(\Squaternionic,g_r)=\lambda^{(0,1,e_1)}(r)=\lambda^{(e_1,0,0)}(r)$, that is, 
\begin{equation}\label{eq2:igualdad-lambda1}
\begin{aligned}
\frac{n(2n+3)}{2n+1} &= \frac{n}{2n+1}r_1^2 + \frac{n}{(2n+1)^2} r_2^2
,\\
\frac{n(2n+3)}{2n+1} &
= \frac{1}{2n+1}r_1^2 + \frac{1}{n(2n+1)^2} r_2^2+ \frac{n-1}{n}r_3^2
.
\end{aligned}
\end{equation} 
The first row gives $r_1^2= 2n+3-\frac{r_2^2}{(2n+1)} $, while the equality between the right hand side of both rows gives $r_3^2=\frac{n}{2n+1}r_1^2 + \frac{n+1}{(2n+1)^2} r_2^2 = \frac{n(2n+3)}{2n+1}+\frac{1}{(2n+1)^2} r_2^2 $.  
Writing $\theta=\frac{r_2^2}{2n+1}$, we conclude that
\begin{equation}\label{eq2:parametros}
(r_1^2,r_2^2,r_3^2)=\left(2n+3-\theta,(2n+1)\theta, \tfrac{2n^2+3n+\theta}{2n+1}\right),
\end{equation}
where $0<\theta<2n+3$. 
It remains to show that $\theta=1$. 

We now analyze the volume, which is an spectral invariant. 
We have that
\begin{equation}
\begin{aligned}
\vol(\Squaternionic,g_r) &
=r_1^{\dim\fp_1}r_2^{\dim\fp_2}r_3^{\dim\fp_3} \vol(\Squaternionic,g_{(1,1,1)})
\\ & 
=r_1^{4n}r_2 r_3^{(n-1)(2n+1)} \vol(\Squaternionic,g_{(1,1,1)})
. 
\end{aligned}
\end{equation}
Now, $\vol(\Squaternionic,g_r)=\vol(\Squaternionic,g_{\bar r})$ yields
\begin{equation*}\label{eq2:igualdad-vol}
\begin{aligned}
\Big(\frac{r_1}{\sqrt{2n+2}}\Big)^{4n} 
\frac{r_2}{\sqrt{2n+1}} 
\Big( \frac{r_3}{\sqrt{n+1}}\Big)^{(n-1)(2n+1)}
=1
.
\end{aligned}
\end{equation*}
Squaring and replacing $r_i^2$ as in \eqref{eq2:parametros}, we deduce that 
\begin{equation*}
\begin{aligned}
V(\theta):=
\Big(\frac{2n+3-\theta}{2n+2}\Big)^{4n} \theta
\left( \frac{2n^2+3n+\theta}{(n+1)(2n+1)}\right)^{(n-1)(2n+1)}
=1
.
\end{aligned}
\end{equation*}

An easy computation shows that $V'(\theta)$ is positive for $0<\theta <1$ and negative for $1<\theta<2n+3$, which implies that the only solution of $V(\theta)=1$ for $0<\theta<2n+3$ is $\theta =1$. 
This implies that $r=\bar r$, and the proof is complete. 
\end{proof}

\bibliographystyle{plain}

\begin{thebibliography}{BLP22b}

\bibitem[Be]{Besse}
	{\sc A. Besse}.
	{Einstein manifolds.}
	Class. Math. 
	Springer, Berlin, 2008.

\bibitem[BS09]{BerestovskiiSvirkin}
	{\sc V.N. Berestovski\u{\i}, V.M. Svirkin}.
	{The {L}aplace operator on homogeneous normal {R}iemannian manifolds.}
	Mat. Tr. \textbf{12}:2 (2009), 3--40.
	DOI: \href{http://dx.doi.org/10.3103/s1055134410040012} {10.3103/s1055134410040012}.
	
\bibitem[BLP22a]{BLPhomospheres}
	{\sc R. Bettiol, E.A. Lauret, P. Piccione}.
	{\it The first eigenvalue of a homogeneous CROSS.}
	J. Geom. Anal. \textbf{32} (2022), 76.
	DOI: \href{https://doi.org/10.1007/s12220-021-00826-7} {10.1007/s12220-021-00826-7}.

\bibitem[BLP22b]{BLPfullspec}
	{\sc R. Bettiol, E.A. Lauret, P. Piccione}.
	{\it Full Laplace spectrum of distance spheres in symmetric spaces of rank one.}
	Bull. Lond. Math. Soc. \textbf{54}:5 (2022), 1683--1704.
	DOI: \href{http://dx.doi.org/10.1112/blms.12650} {10.1112/blms.12650}.

\bibitem[BP13]{BettiolPiccione13a}
	{\sc R. Bettiol, P. Piccione}.
	{\it Bifurcation and local rigidity of homogeneous solutions to the {Y}amabe problem on spheres.}
	Calc. Var. Partial Differential Equations \textbf{47}:3--4 (2013), 789--807.
	DOI: \href{http://dx.doi.org/10.1007/s00526-012-0535-y} {10.1007/s00526-012-0535-y}.

\bibitem[Bo49]{Borel49}
	{\sc A. Borel}.
	{\it Some remarks about {L}ie groups transitive on spheres and tori.}
	Bull. Amer. Math. Soc. \textbf{55} (1949), 580--587.
	DOI: \href{http://dx.doi.org/10.1090/S0002-9904-1949-09251-0} {10.1090/S0002-9904-1949-09251-0}.

\bibitem[Bo49]{Borel50}
	{\sc A. Borel}.
	{\it Le plan projectif des octaves et les sph\`eres comme espaces homog\`enes.}
	C. R. Acad. Sci. Paris \textbf{230} (1950), 1378--1380.

\bibitem[Ca05]{Camporesi05Pacific}
	{\sc R. Camporesi}.
	{\it A generalization of the Cartan--Helgason theorem for Riemannian symmetryc spaces of rank one.}
	Pacific J. Math. \textbf{222}:1 (2005), 1--27. 
	DOI: \href{http://dx.doi.org/10.2140/pjm.2005.222.1} {10.2140/pjm.2005.222.1}.

\bibitem[CH15]{CaoHe15}
	{\sc H.-D. Cao, C. He}.
	{\it Linear stability of Perelman's $\nu$-entropy on symmetric spaces of compact type.}
	J. Reine Angew. Math. \textbf{709} (2015), 229--246.
	DOI: \href{http://dx.doi.org/10.1515/crelle-2013-0096} {10.1515/crelle-2013-0096}.	

\bibitem[CS]{ConwaySloane-book}
	{\sc J.H. Conway, N.J.A. Sloane}.
	Sphere packings, lattices and groups.
	Third edition. \textit{Grundlehren Math. Wiss.}, Springer-Verlag, New York, 1999.
	DOI: \href{http://dx.doi.org/10.1007/978-1-4757-6568-7} {10.1007/978-1-4757-6568-7}.

\bibitem[DZ79]{DAtriZiller}
	{\sc J.E. D'Atri, W. Ziller}.
	{\it Naturally reductive metrics and Einstein metrics on compact Lie groups.}
	Mem. Amer. Math. Soc. \textbf{18}:215, 1979.

\bibitem[GW]{GoodmanWallach-book-Springer}
	{\sc R. Goodman, N. Wallach}.
	Symmetry, Representations, and Invariants.
	{\it Grad. Texts in Math.} \textbf{255}.
	Springer-Verlag New York, 2009.
	DOI: \href{http://dx.doi.org/10.1007/978-0-387-79852-3} {10.1007/978-0-387-79852-3}.

\bibitem[Go00]{Gordon00survey}
    {\sc C. Gordon}.
    {\it Survey of isospectral manifolds.}
    In \textit{Handbook of differential geometry}, Vol. I, 747--778, North-Holland, Amsterdam, 2000.

\bibitem[GSS10]{GordonSchuethSutton10}
	{\sc C. Gordon, D. Schueth, C. Sutton}.
	{\it Spectral isolation of bi-invariant metrics on compact Lie groups}.
	Ann. Inst. Fourier (Grenoble) \textbf{60}:5 (2010), 1617--1628.
	DOI: \href{http://dx.doi.org/10.5802/aif.2567} {10.5802/aif.2567}.

\bibitem[Ke96]{Kerr96}
    {\sc M.M. Kerr}.
    {\it Some new homogeneous {E}instein metrics on symmetric spaces.}
    Trans. Amer. Math. Soc. \textbf{348}:1 (1996), 153--171.
    DOI: \href{http://dx.doi.org/10.1090/S0002-9947-96-01512-7} {10.1090/S0002-9947-96-01512-7}.

\bibitem[Kr15]{Kroencke15}
	{\sc K. Kr\"{o}ncke}.
	{\it Stability and instability of {R}icci solitons.}
	Calc. Var. Partial Differential Equations \textbf{53}:1--2 (2015), 265--287.
	DOI: \href{https://doi.org/10.1007/s00526-014-0748-3} {10.1007/s00526-014-0748-3}.

\bibitem[La19a]{Lauret-globalrigid}
	{\sc E.A. Lauret}.
	{\it Spectral uniqueness of bi-invariant metrics on symplectic groups}.
	Transform. Groups \textbf{24}:4 (2019), 1157--1164.
	DOI: \href{http://dx.doi.org/10.1007/s00031-018-9486-5} {10.1007/s00031-018-9486-5}.

\bibitem[La19b]{Lauret-SpecSU(2)}
	{\sc E.A. Lauret}.
	{\it The smallest Laplace eigenvalue of homogeneous 3-spheres}.
	Bull. Lond. Math. Soc. \textbf{51}:1 (2019), 49--69.
	DOI: \href{http://dx.doi.org/10.1112/blms.12213} {10.1112/blms.12213}.
	
\bibitem[LM21]{LM-repequiv2}
	{\sc E.A. Lauret, R.J. Miatello}.
	{\it Representation equivalence for compact symmetric spaces of real rank one.}
	Pacific J. Math. \textbf{314}:2 (2021), 333--373. 
	DOI: \href{http://dx.doi.org/10.2140/pjm.2021.314.333} {10.2140/pjm.2021.314.333}.

\bibitem[LR24]{LauretRodriguez-part2}
	{\sc E.A. Lauret, J.S. Rodríguez}.
	{\it Spectrally distinguishing symmetric spaces II}.
	DOI: \href{https://doi.org/10.48550/arXiv.2411.06886} {arXiv:2411.06886} (2024).
	

\bibitem[LSS21]{LinSchmidtSuttonII}
	{\sc S. Lin, B. Schmidt, and C.J. Sutton}.
	{\it Geometric structures and the {L}aplace spectrum, {P}art II}.
	Trans. Amer. Math. Soc. \textbf{374}:12 (2021), 8483--8530.
	DOI: \href{https://doi.org/10.1090/tran/8417} {10.1090/tran/8417}.

\bibitem[MS43]{MontgomerySamelson43-spheres}
	{\sc D. Montgomery, H. Samelson}.
	{\it Transformation groups of spheres.}
	Ann. of Math. (2) \textbf{44} (1943), 454--470.
	DOI: \href{http://dx.doi.org/10.2307/1968975} {10.2307/1968975}.

\bibitem[On63]{Onishchik63-rank1}
	{\sc A.L. Onishchik}.
	{\it Transitive compact transformation groups.}
	Mat. Sb. (N.S.) \textbf{60 (102)} (1963), 447--485 [Russian].
	Amer. Math. Soc. Transl. (2) \textbf{55} (1966), 153--194. 

\bibitem[On66]{Onishchik66-inclusion}
	{\sc A.L. Onishchik}.
	{\it Inclusion relations among transitive compact transformation groups.}
	Transl., Ser. 2, Am. Math. Soc. \textbf{50} (1966), 5--58. 
	DOI: \href{http://dx.doi.org/10.1090/trans2/050/02} {10.1090/trans2/050/02}.

\bibitem[SS14]{SchmidtSutton13}
	{\sc B. Schmidt, C. Sutton}
	{\it Detecting the moments of inertia of a molecule via its rotational spectrum, II}.
	Preprint available at \href{http://users.math.msu.edu/users/schmidt/} {Schmidt's web page.} (2014).

\bibitem[Ta73]{Tanno73}
	{\sc S. Tanno}.
	{\it Eigenvalues of the {L}aplacian of {R}iemannian manifolds.}
	T\v ohoku Math. J. (2) \textbf{25}:3 (1973), 391--403.
	DOI: \href{http://dx.doi.org/10.2748/tmj/1178241341} {10.2748/tmj/1178241341}.

\bibitem[Ta79]{Tanno79}
	{\sc S. Tanno}.
	{\it The first eigenvalue of the {L}aplacian on spheres.}
	T\v ohoku Math. J. (2) \textbf{31}:2 (1979), 179--185. 
	DOI: \href{http://dx.doi.org/10.2748/tmj/1178229837} {10.2748/tmj/1178229837}.

\bibitem[Ta80a]{Tanno80}
	{\sc S. Tanno}.
	{\it Some metrics on a $(4r+3)$-sphere and spectra.}
	Tsukuba J. Math. \textbf{4}:1 (1980), 99--105. 
	DOI: \href{http://dx.doi.org/10.21099/tkbjm/1496158796} {10.21099/tkbjm/1496158796}.

\bibitem[Ta80b]{Tanno80spectralisolation}
	{\sc S. Tanno}.
	{\it A characterization of the canonical spheres by the spectrum}.
	Math. Z. \textbf{175} (1980), 267--274. 
	DOI: \href{https://doi.org/10.1007/BF01163028} {10.1007/BF01163028}.

\bibitem[Ur79]{Urakawa79}
	{\sc H. Urakawa}.
	{\it On the least positive eigenvalue of the {L}aplacian for compact group manifolds.}
	J. Math. Soc. Japan \textbf{31}:1 (1979), 209--226.
	DOI: \href{http://dx.doi.org/10.2969/jmsj/03110209} {10.2969/jmsj/03110209}.

\bibitem[Ur86]{Urakawa86}
	{\sc H. Urakawa}.
	{\it The first eigenvalue of the {L}aplacian for a positively curved homogeneous {R}iemannian manifold.}
	Compositio Math. \textbf{59}:1 (1986), 57--71.

\bibitem[Wa]{Wallach-book}
    {\sc N. Wallach}.
    {Harmonic analysis on homogeneous spaces}.
    {\it Pure and Applied Mathematics} \textbf{19}.
    Marcel Dekker, Inc., New York, 1973.

\bibitem[WZ86]{WangZiller86}
	{\sc M.Y.-K. Wang, W. Ziller}.
    {\it Existence and non-existence of homogeneous {Einstein} metrics.}
    Invent. Math. \textbf{84} (1986), 177--194.
    DOI: \href{http://dx.doi.org/10.1007/BF01388738} {10.1007/BF01388738}.
\end{thebibliography}

\end{document}